\documentclass[11pt]{amsart}
\textwidth=6.1in
\hoffset=-0.5in
\usepackage[latin1]{inputenc} 
\usepackage[pctex32]{graphics}
\usepackage{amsfonts}
\usepackage{amssymb,amscd,latexsym}
\usepackage{amsmath}
\usepackage{epsfig}
\usepackage{color}

\newtheorem{thm}{Theorem}[section]
\newtheorem{cor}[thm]{Corollary}
\newtheorem{prop}[thm]{Proposition}

\newtheorem{defin}[thm]{Definition}

\newtheorem{lema}[thm]{Lemma}
\newtheorem{rmk}[thm]{Remark}

\newtheorem{ex}{Example}

\newcommand{\codim}{\operatorname{codim}}
\newcommand{\red}{\operatorname{red}}
\newcommand{\reg}{\operatorname{reg}}
\renewcommand{\Im}{\operatorname{Im}}
\def\p{\mathbb P}

\def\P{\mathbb{P}}
\def\rk{\operatorname{rk}}
\newcommand{\dy}{\displaystyle}
\def\Hess{\operatorname{Hess}}
\def\hess{\operatorname{hess}}
\renewcommand{\Vert}{\operatorname{Vert}}
\def\Sing{\operatorname{Sing}}

\newcommand{\mult}{\operatorname{mult}}

\newcommand{\Bs}{\operatorname{Bs}}

\begin{document}

\title{On  Cubic Hypersurfaces with vanishing Hessian}
\author[R. Gondim]{Rodrigo Gondim}
\address{Universidade Federal Rural de Pernambuco}
\email{rodrigo.gondim.neves@gmail.com}
\author[F. Russo]{Francesco Russo*}
\address{Dipartimento di Matematica e Informatica, Universit\` a degli Studi di Catania, Viale A. Doria 5, 95125 Catania, Italy}
\email{frusso@dmi.unict.it}
\date{}

\begin{abstract}
We prove that for $N\leq 6$ an irreducible cubic hypersurface with vanishing hessian in $\p^N$  is either
a cone or a scroll in linear spaces tangent to the dual of the image of the polar map of the hypersurface.
We also provide canonical forms and a projective characterization of {\it Special Perazzo Cubic Hypersurfaces},
which, a posteriori, exhaust the class of cubic hypersurfaces with vanishing hessian, not cones,  for $N\leq 6$. 
Finally we show by pertinent examples the technical difficulties arising for $N\geq 7$.

\end{abstract}
\thanks{*Partially  supported  by the PRIN ``Geometria delle variet\`{a} algebriche"; the author is a member of the G.N.S.A.G.A}

\maketitle

\section*{Introduction}

The aim of this  paper is to provide the classification of cubic hypersurfaces with vanishing hessian in $\p^N$
for $N\leq 6$ (see Theorems  \ref{P4}, \ref{P5}, \ref{P6_1}, \ref{P6_1_1}, \ref{P6_2}).

Hypersurfaces with vanishing hessian 
were studied systematically  for the first time in  the fundamental paper \cite{GN}, where  Gordan and M. Noether  analyze
Hesse's claims in \cite{Hesse1, Hesse2}
according to which these hypersurfaces are necessarily cones.

More explicitly, if  $X = V(f) \subset \P^N$ is  a reduced  complex hypersurface,  
the hessian of $f$ (or by abusing the terminology the hessian of $X$), indicated by $\hess_X$,  is the determinant of the 
matrix of the second derivatives of the form $f$, that is  the determinant of the so called hessian matrix of $f$.

Of course cones have vanishing hessian and  Hesse  claimed twice in \cite{Hesse1} and in  \cite{Hesse2} that a 
hypersurface $X\subset\p^N$ is a cone if  $\operatorname{hess}_X= 0$. Clearly the claim is true if $\deg(X)=2$ so that the first relevant case for the problem is that of cubic hypersurfaces. One immediately sees
that $V(x_0x_3^2 +  x_1x_3x_4 +  x_2x_4^2)\subset \p^4$ is a cubic hypersurface with vanishing hessian but
not a cone, for example because the first partial derivatives of the equation are linearly independent.

Actually the  question is quite subtle
because,  as it was firstly pointed out in \cite{GN},
the claim is true for $N\leq 3$ and in general false for $N\geq 4$. The cases $N=1,2$ are easily handled but beginning
from $N=3$ the question is related to non trivial characterizations of cones among developable hypersurfaces or, from a
 differential  point of view, to a characterization of algebraic cones (or of algebraic cylinders in the affine setting)
among hypersurfaces with zero gaussian  curvature at every regular point. 
 
Gordan and Noether approach 
to the problem and their proofs for $N\leq 3$ have been revisited recently in modern terms in \cite{Lo} and \cite{GR}, see also \cite{Pt1,Pt2}.  
In \cite{GN} it is  constructed  a series of projective hypersurfaces in $\p^N$ for every $N\geq 4$, which generically are not cones  and 
to which the explicit example recalled above belongs, see also \cite{Pt1,Pt2},  \cite[Section 2]{CRS} and Example \ref{Segreproj}. Moreover Gordan and Noether 
also classified all hypersurfaces with vanishing hessian for $N\leq 4$ proving that they are either cones or $N=4$ and 
the hypersurfaces belong to their series of examples, see {\it loc. cit.}, \cite{Fr2} and \cite{GR}. The work of Perazzo considered
the classification of cubic hypersurfaces with vanishing hessian for $N\leq 6$, see \cite{Pe}, and analyzed  particular subclasses
dubbed by us {\it Special Perazzo Cubic Hypersurfaces}. Perazzo's  ideas and techniques  are very interesting and inspired deeply
our work although in our opinion they  contain some serious mistakes and non motivated claims which unfortunately
affect most of the main results,  see Remark \ref{rm:linearfiber} for a discussion of some of these imprecise statements.

As far as we know no explicit classification result is known for $N\geq 5$ so that our contribution is the first 
attempt to extend to higher dimension   Gordan-Noether-Franchetta classification of hypersurfaces with vanishing hessian
albeit only in the cubic case.

Hypersurfaces with vanishing hessian remained  outside the mainstream of (algebraic) geometry for a long time although they represent very interesting objects for many areas of research. For example the cubic hypersurface recalled above, very well known to classical algebraic geometers, is  celebrated in the modern algebraic-differential geometry literature as the 
{\it Bourgain-Sacksteder Hypersurface} (see \cite{AG2, AG, FP}). 
Moreover the regular points of a hypersurface with  
vanishing hessian are all {\it parabolic} and represent a natural generalization of the flex points of plane curves (see \cite{Ciliberto, CRS}). The divisibility properties of the hessian with respect to the original form  
have interesting geometric consequences, see \cite{Segre, Ciliberto}; for instance  {\it developable} hypersurfaces are those for which $\operatorname{hess}_X= 0 \pmod{f}$, see \cite{Segre, AG, FP}.  

We now describe the sections of the paper in  more detail. In Section \ref{Preliminaries} we introduce the notation, recall some well known results and define the polar map of a hypersurface. We also state without proof the Gordan-Noether Identity for hypersurfaces
with vanishing hessian and deduce some important geometrical consequences in Theorem \ref{partialdiff} and Corollary \ref{thm:polar&perazzo}. In Section \ref{Perazzomap} we introduce the Perazzo map of a hypersurface with vanishing hessian
following \cite{Pe} and then we study the geometry associated to the fibers of this map and their relation with the key variety $Z^*$,
which is the dual of the closure of the image of the polar map, see Section \ref{Preliminaries} for precise definitions. Firstly we prove that the closure of a general fiber of the Perazzo map is a linear space,
Theorem \ref{Perazzolinear}, correcting some mistakes contained in \cite{Pe}, see Remark \ref{rm:linearfiber} and Proposition \ref{estimateF}.
Then we analyze the congruence of linear spaces determined by the fibers of the Perazzo map introducing the notion of {\it Special Perazzo
Cubic Hypersurface} which corresponds to the case in which the congruence of the fibers of the Perazzo map is the family  of linear spaces passing through
a fixed (codimension one) linear space, see Proposition \ref{prop:lema1,2}. 

Special Perazzo Cubic Hypersurfaces  are somehow surprising since they are ruled by a family of linear
spaces
along which the hypersurface is not developable so that this ruling is different from the one  given by the fibers of the Gauss map. These examples and their generalizations are known in differential geometry as
{\it twisted planes}, see for example \cite{FP}. Despite the huge number of papers dedicated to this subject in differential geometry very few classification or structure results have been obtained. In our opinion  the global point of view provided by polarity, which  has been  overlooked until now by differential geometers, is a stronger tool to treat these objects.

In  Section \ref{examplesmall} we exhibit  canonical forms of cubics with vanishing hessian and singular along a linear space and we also prove a projective characterization  of Special Perazzo Cubic Hypersurfaces following \cite{Pe}. Section \ref{Nleq6} contains the classification and structure of cubic hypersurfaces with vanishing hessian having $\dim(Z^*)=1$,
Theorem \ref{mu = 1}. With these
results the classification of cubic hypersufaces with vanishing hessian in $\p^N$, $N\leq 6$,  is easily deduced in the rest of Section \ref{Nleq6}.
 In Section \ref{N7}  we discuss some examples showing the difficulties arising for $N\geq 7$ and the existence of some exceptional and sporadic phenomena.

\section{Preliminaries}\label{Preliminaries}

\subsection{Cones, Hessian and the  Polar Map of a Hypersurface}

Let $X = V(f) \subset \P^N$ be a reduced  hypersurface and let $d=\deg(X)\geq 1$ be its degree.
From now on we shall also restrict to the case of 
an algebraically closed field $\mathbb K$
of characteristic zero.

For a set $S\subseteq \p^N$ we shall indicate by $<S>$ its linear span
in $\p^N$ and $S\subseteq\p^N$ is said to be {\it degenerate} if $<S>\subsetneq\p^N$. With this notation  $<p,q>$ is the line through two distinct points $p,q\in\p^N$.
\begin{defin}
  Let $X \subset \P^N$ be a projective variety. The vertex of $X$ is 
$$\operatorname{Vert}(X) = \{ p \in X | <p,q> \subset X,\  \forall q \in X\}.$$

A projective variety $X \subset \P^N$ is a cone if $\operatorname{Vert}(X) \neq \emptyset$.

\end{defin}

\begin{rmk}\label{rm:lincones}{\rm

The set  $\Vert(X)\subset\p^N$ is a linear subspace and,
by generic smoothness,   $\Vert(X)=\bigcap_{x\in X}T_xX$, where $T_xX$ is the projective tangent space to $X$ at $x$.

Let $X\subset\p^N$ be an equidimensional variety of dimension $n=\dim(X)\geq 1$. Then
$\dim(\Vert(X))\geq n-1$ implies that $X$ is the union of linear spaces passing through
$\Vert(X)$.

}
\end{rmk}

We present, without proof, some  well known results used in the sequel.

\begin{prop}{\label{prop:equivalence_definitions_of_cones}}
Let $X = V(f) \subset \P^N$ be a  hypersurface of degree $d$. Then the following conditions are equivalent:
\begin{enumerate}
  \item[i)] $X$ is a cone;
\item[ii)] There exists a point $p \in X$ of multiplicity $d$;
  \item[iii)] The partial derivatives $\frac{\partial f}{\partial x_0},\frac{\partial f}{\partial x_1},...,\frac{\partial f}{\partial x_N}$ of $f$ are linearly dependent;
  \item[iv)] Up to a projective transformation, $f$ depends on at most $N$ variables.
\item[v)] The dual variety of $X$, $X^* \subset (\P^N)^*$, is degenerate.
  \end{enumerate}

\end{prop}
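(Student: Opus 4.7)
The plan is to establish the five equivalences by proving (ii) $\Leftrightarrow$ (iv) $\Leftrightarrow$ (iii), then (ii) $\Leftrightarrow$ (i), and finally (i) $\Leftrightarrow$ (v), relying on coordinate manipulations, a short direct computation, and projective duality.

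For (ii) $\Leftrightarrow$ (iv) $\Leftrightarrow$ (iii), I would start by placing the point $p$ of (ii) at $(1:0:\ldots:0)$ after a linear change of coordinates and expanding $f=\sum_{i=0}^{d} x_0^{d-i} g_i(x_1,\ldots,x_N)$ with $g_i$ homogeneous of degree $i$. The condition $\mult_p(X)=d$ is then precisely $g_0=\cdots=g_{d-1}=0$, i.e.\ $f$ is independent of $x_0$, which is (iv). The equivalence (iii) $\Leftrightarrow$ (iv) follows by choosing coordinates that transform a nontrivial linear relation $\sum c_i \partial f/\partial x_i=0$ into the single equation $\partial f/\partial x_N=0$, equivalent to $f$ not involving $x_N$.

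Next, (ii) $\Rightarrow$ (i) is a short computation: with $f=g(x_1,\ldots,x_N)$ homogeneous of degree $d$ and any $q \in X$, $q\neq p$, normalized as $q=(0:q_1:\ldots:q_N)$, the line $<p,q>$ parametrized by $(s:tq_1:\ldots:tq_N)$ satisfies $f=t^d g(q_1,\ldots,q_N)=0$ identically, so $<p,q> \subset X$ and $p \in \Vert(X)$. The reverse (i) $\Rightarrow$ (iv), which I expect to be the most delicate step, is where the hypothesis that $X$ is reduced plays an essential role. By Remark \ref{rm:lincones}, a vertex $p=(1:0:\ldots:0)$ lies in $T_xX$ for every $x \in X_{\mathrm{sm}}$, so $\partial f/\partial x_0$ vanishes on the dense open set $X_{\mathrm{sm}}$ and hence on the whole of $X=V(f)$. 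Since $\deg \partial f/\partial x_0 = d-1 < d$ and $f$ is reduced, $f$ cannot divide $\partial f/\partial x_0$ unless $\partial f/\partial x_0 \equiv 0$, forcing $f$ to be independent of $x_0$ as required.

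Finally, (i) $\Leftrightarrow$ (v) follows from projective duality. By Remark \ref{rm:lincones}, $p \in \Vert(X)$ if and only if every hyperplane tangent to $X$ at a smooth point contains $p$, which is in turn equivalent, by biduality $X^{**}=X$, to $X^*$ being contained in the hyperplane of $(\p^N)^*$ dual to $p$, i.e.\ to $X^*$ being degenerate.
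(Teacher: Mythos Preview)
The paper states this proposition explicitly \emph{without proof}, presenting it among ``well known results used in the sequel.'' There is therefore no paper proof to compare against; your argument supplies one where the authors chose to omit it.

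Your proof is correct. The coordinate manipulations establishing (ii) $\Leftrightarrow$ (iv) $\Leftrightarrow$ (iii) are standard and sound. For (i) $\Rightarrow$ (iv) you rightly invoke Remark~\ref{rm:lincones} to get $\partial f/\partial x_0$ vanishing on $X_{\mathrm{sm}}$, and you correctly identify reducedness of $f$ as the hypothesis that forces $\partial f/\partial x_0\equiv 0$ from the degree comparison (indeed, $f$ reduced means the ideal $(f)$ is radical, so vanishing on $V(f)$ implies divisibility by $f$). The duality argument for (i) $\Leftrightarrow$ (v) is also fine; note that biduality is not strictly needed for either direction, since $p\in\bigcap_{x\in X_{\mathrm{sm}}}T_xX$ is directly equivalent to $X^*$ lying in the hyperplane $p^*\subset(\P^N)^*$, but invoking it does no harm.
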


\begin{defin} \rm
 Let $X = V(f) \subset \P^N$ be a reduced hypersurface. The Hessian matrix of $f$ is
$$\operatorname{Hess}_f = \left[\frac{\partial^2 f}{\partial x_i \partial x_j}\right]_{0\leq i,j \leq N}$$
We also call it the hessian matrix of $X$ and write $\operatorname{Hess}_X$ since we will be interested in properties
of this matrix (like the vanishing of its determinant or more generally its rank) which are well defined modulo
the multiplication of $f$ by a non-zero constant. 
The determinant of the matrix $\Hess_X$  will be denoted by $\hess_X$
and called the {\it hessian of $X$}. Thus it is defined modulo a non zero constant and it is a projective covariant
whose vanishing does not depend on the equation of $X$. 
\end{defin}

Cones form a trivial class of hypersurfaces with vanishing hessian. Indeed if $X$ is a cone up to a linear change of coordinates the form $f$ 
does not depend on all the variables, see  also
Proposition \ref{prop:equivalence_definitions_of_cones}. Therefore in this case the Hessian matrix has at least a null row (and a null column), yielding $\hess_X= 0$.

The converse is not true in general if $d\geq 3$ as shown by the example recalled in the Introduction. We now define the polar map of a hypersurface in order to begin to clarify the deep geometrical consequences  of the condition $\hess_X= 0$.

\begin{defin}\rm
The {\it polar map} (or {\it gradient map}) of a hypersurface $X=V(f) \subset \P^N$, indicated by $\Phi_X$ or by $\Phi_f$, is the rational map  $\Phi_X:  \P^N  \dashrightarrow   (\P^N)^*$ given by the derivatives of $f$:
$$ \Phi_X(p)=(\frac{\partial f}{\partial x_0}(p): \frac{\partial
f}{\partial x_1}(p):\ldots :\frac{\partial f}{\partial x_N}(p) ).$$

Let  $Z = \overline{\Phi_X(\P^N)}\subseteq(\p^N)^*$ be the closure of the image of the polar map.
The base locus scheme of the polar map is the singular scheme of $X$ which will be denoted by $$\operatorname{Sing}X:=V(\frac{\partial f}{\partial x_0},\ldots, \frac{\partial f}{\partial x_N})\subset \p^N.$$ We shall maintain it distinct from the 
set theoretic singular locus $(\operatorname{Sing} X)_{\operatorname{red}}$.
\end{defin}

If $p=[\mathbf v]$, then  $t_p\p^N=\mathbb K^{N+1}/<\mathbf v>$ is  the affine tangent space to $\p^N$ at $p$. Let
$\Hess_X(p)$ be the equivalence class of the  hessian matrix of $X$ evaluated at $\mathbf v$.
By Euler's formula the equivalence class of  the linear map $\Hess_X(p)$  passes to the quotients and it induces the differential
of the map $\Phi_X$ at $p$:
\begin{equation}\label{eq:dfp}
(d\Phi_X)_p: t_p\p^N\to t_{\Phi_X(p)}\p^N,
\end{equation}
whose image is  $t_{\Phi_X(p)}Z$ if $p$ is general by generic smoothness. From this we can
describe the projective tangent space to $Z$ at $\Phi_X(p)$ for $p\in\p^N$ general, obtaining 
\begin{equation}\label{eq:TpZ}
T_{\Phi_X(p)}Z=\p(\Im(\Hess_X(p)))\subseteq(\p^{N})^*.
\end{equation}
Thus
\begin{equation}\label{eq:dimZdiff}
\dim Z = \operatorname{rk}(\operatorname{Hess}_X)-1.
\end{equation}
Therefore  $\hess_X= 0$  if and only if $Z\subsetneq\p^{N*}$ if and only if the partial
derivatives of $f$ are algebraically dependent, that is there exists a non zero homogeneous polynomial $g\in\mathbb K[y_0,\ldots, y_N]$
such that $g(\frac{\partial f}{\partial x_0},\ldots,\frac{\partial f}{\partial x_N})= 0$. 

From this perspective  Hesse's claim can be  translated  into asking if  the algebraic dependence of the first partial derivatives
of a homogeneous form implies their linear dependence, a quite subtle question as we mentioned in the Introduction and whose answer is negative in general but positive for $N\leq 3$, see {\it loc. cit.}.
\medskip

The next result will be important to describe the structure of hypersurfaces with vanishing hessian via the analysis of the restriction of the polar map  to a
hyperplane, for a proof see \cite[Lemma 3.10]{CRS}.

\begin{lema}\label{projection}
Let $X=V(f)\subset\mathbb{P}^N$ be a hypersurface. Let
$H=\mathbb{P}^{N-1}$ be a hyperplane not contained in $X$, let
$h=H^*$ be the corresponding point in $\mathbb{P}^{n*}$ and let
$\pi_h$ denote the projection from the point $h$. Then:
$$\Phi_{V(f)\cap H}=\pi_h\circ (\Phi_{f|H}).$$
In particular, $Z(V(f)\cap H)\subseteq \pi_h(Z(f))$, where  $Z(V(f)\cap H)$ denotes the closure of the image of the polar map 
$\Phi_{V(f)\cap H}:H=\p^{n-1}\dasharrow (\p^{n-1})^*$. 
\end{lema}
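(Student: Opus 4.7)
The plan is to reduce the identity to the trivial observation that partial differentiation commutes with restriction to a coordinate hyperplane, and then to deduce the inclusion of the images by a standard closure argument.

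First I would adapt the coordinates so that $H=V(x_N)$; then $h=H^{*}=(0{:}\cdots{:}0{:}1)\in(\p^{N})^{*}$ and the projection from $h$ is $\pi_{h}(y_0{:}\cdots{:}y_N)=(y_0{:}\cdots{:}y_{N-1})$. Since $H\not\subset X$, the polynomial $g(x_0,\ldots,x_{N-1}):=f(x_0,\ldots,x_{N-1},0)$ is non-zero and defines $V(f)\cap H$ in $H=\p^{N-1}$, so the polar map $\Phi_{V(f)\cap H}$ is a well-defined rational map.

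Next I would simply compute both sides of the proposed identity. The left-hand side is
\[
\Phi_{V(f)\cap H}(p)=\bigl(\partial_{0}g(p):\cdots:\partial_{N-1}g(p)\bigr),
\]
while, for $p\in H$, the right-hand side is
\[
\pi_{h}\!\bigl(\Phi_{f|H}(p)\bigr)=\pi_{h}\bigl(\partial_{0}f(p):\cdots:\partial_{N}f(p)\bigr)=\bigl(\partial_{0}f(p):\cdots:\partial_{N-1}f(p)\bigr).
\]
The identity then boils down to the fact that for $i<N$ the operations of differentiating in $x_i$ and of setting $x_N=0$ commute, so $\partial_i g(p)=\partial_i f(p)$ for every $p\in H$. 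One also checks that $\pi_h\circ\Phi_{f|H}$ is not the zero map: if it were, $\partial_0 f,\ldots,\partial_{N-1}f$ would vanish on $H$, forcing $f$ to depend only on $x_N$ and hence $V(f)=H$, contradicting $H\not\subset X$.

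For the inclusion $Z(V(f)\cap H)\subseteq\pi_h(Z(f))$ I would take closures in the chain
\[
\Phi_{V(f)\cap H}(H)=\pi_{h}\bigl(\Phi_{f|H}(H)\bigr)=\pi_{h}\bigl(\Phi_{f}(H)\bigr)\subseteq\pi_{h}\bigl(\Phi_{f}(\p^{N})\bigr),
\]
using that $\pi_h$ is the restriction of a morphism on $(\p^N)^{*}\setminus\{h\}$, that $Z(f)$ is projective and, by the argument above, strictly larger than $\{h\}$, so $\pi_{h}(Z(f))$ is a well-defined closed subvariety of $(\p^{N-1})^{*}$ containing the image of the polar map of the hyperplane section. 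Honestly there is no real obstacle: the entire content of the lemma is the commutativity of differentiation with restriction to a coordinate hyperplane, and the only care needed is to make sure the two rational maps and the projection make sense, which is exactly what the hypothesis $H\not\subset X$ supplies.
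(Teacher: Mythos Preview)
The paper does not actually prove this lemma; it merely cites \cite[Lemma~3.10]{CRS}. Your argument is the standard one and is correct: after choosing coordinates with $H=V(x_N)$, the identity $\Phi_{V(f)\cap H}=\pi_h\circ(\Phi_{f}|_H)$ is exactly the equality $\partial_i(f|_{x_N=0})=(\partial_i f)|_{x_N=0}$ for $i<N$, and the inclusion of closures follows because the projection of the projective variety $Z(f)$ from $h$ is closed.

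One small imprecision: when you check that $\pi_h\circ\Phi_{f|H}$ is not the zero map, the vanishing of $\partial_0 f,\ldots,\partial_{N-1}f$ on $H$ does \emph{not} force $f$ to depend only on $x_N$ (take $f=x_0x_N$). What it does force, via $\partial_i g=(\partial_i f)|_H$, is that all partial derivatives of $g$ vanish, hence $g=0$ by Euler's formula, i.e.\ $H\subset V(f)$; this is the contradiction you need. The rest of your argument is fine.
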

\medskip

\begin{defin}\label{dualGaussmap}\rm
  Let $X \subset \P^N$ be a reduced equidimensional projective variety of dimension $\dim X = n$. The {\it Gauss map of $X$}, $\mathcal G_X: X\dasharrow\mathbb G(n,N)$, is the rational map 
associating to every smooth point of $X$ its embedded projective tangent space considered as a point in  $\mathbb{G}(n,N)$, i.e. $\mathcal G_X(p)=[T_pX]\in\mathbb G(n,N)$.

We shall later consider also the {\it dual Gauss map of $X$} defined at a smooth point $x\in X$ by $\mathcal G_X^*(x)=[(T_xX)^*]\in\mathbb G(N-n-1, N)$.
\end{defin}

In the special case of hypersurfaces, $X = V(f) \subset \P^N$, the Gauss map of $X$ is the restriction of the polar map of $X$ to $X$
and  the image of the Gauss map is the dual variety $X^*$. In particular $X^* \subseteq Z$.

\subsection{The Gordan-Noether Identity and its geometrical consequences}\label{GNIDS}

From now on $f\in \mathbb K[x_0,\ldots,x_N]$ will be a homogeneous reduced
polynomial of degree $d$ such that $\hess_f= 0$, unless
otherwise stated.
\medskip

Since $\hess_f= 0$, there exist non zero homogeneous polynomials
$\pi\in \mathbb K[y_0,\ldots , y_N]$ such that $\pi(\frac{\partial f}{\partial
x_0},\ldots ,\frac{\partial f}{\partial x_N})=0$. Let $g\in
\mathbb K[y_0,\ldots , y_N]$ be  an irreducible polynomial with this property and such that
$g_i:=\frac{\partial g}{\partial y_i}(\frac{\partial f}{\partial
x_0},\ldots ,\frac{\partial f}{\partial x_N})\neq 0$ for at least
one $i\in\{0,\ldots,N\}$. Letting
$T=V(g)\subset\p^{N*}$ the previous condition is equivalent to $Z\not\subseteq\Sing T$.

\begin{defin}\label{defpsig}{\em Let $T=V(g)\subset\p^{N*}$ be an irreducible and reduced hypersurface  containing the polar image $Z(f)$, where $g(y_0,\ldots, y_N)$ is as above.
By definition of $g$  the variety $Z\not\subset\Sing T$ so that
$\psi_g=\Phi_g\circ\Phi_f\colon\mathbb{P}^N\dashrightarrow\mathbb{P}^N$
is well defined. Equivalently $\psi_g$ is the composition of $\phi_f$ with the Gauss map of $T$.
 If the polynomials
$g_i:=\frac{\partial g}{\partial y_i}(\frac{\partial f}{\partial x_0},\ldots ,\frac{\partial f}{\partial x_N})\in \mathbb K[x_0,\ldots ,
x_N]$ have a common
divisor $\rho:={\rm g.c.d.}(g_0,\ldots ,g_N)\in \mathbb K[x_0,\ldots ,
x_N]$,
set $h_i:=\frac{g_i}{\rho}\in \mathbb K[x_0,\ldots ,
x_N]$, for $i=0,\ldots ,N$.

It follows that the map $\psi_g$ is given  by:
\begin{equation}\label{psig}
\psi_g(p) =(g_0(f_0(p),\ldots ,f_N(p))\colon\ldots\colon g_N(f_0(p),\ldots ,f_N(p)))=(h_0(p)\colon\ldots\colon h_N(p)),
\end{equation}
with ${\rm
g.c.d.}(h_0,\ldots, h_N)=1$.}
\end{defin}

Set $T^*_Z:=\overline{\psi_g(\mathbb{P}^N)}$ and note that, by definition of $\psi_g$,
$T^*_Z\subset Z(f)^*.$ Therefore by taking $\alpha+1=\codim(Z)$ polynomials $g^0,\ldots g^{\alpha}$
defining locally $Z$ around a point $z\in Z_{\reg}$ we deduce that  $(T_zZ)^*\subset Z^*$. Moreover 
for a general point $r\in (T_{\Phi_X(p)}Z)^*$, $p\in\p^N$ such that $\Phi_X(p)\in Z_{\reg}$, there exists $a_0,
\ldots, a_{\alpha}\in \mathbb K$ such that, letting $\underline a=(a_0,
\ldots, a_{\alpha})\in \mathbb K^{\alpha+1}$ and $g_{\underline a}=\sum_{i=0}^{\alpha}a_ig^i$,

\begin{equation}\label{linTz}
r=\sum_{i=0}^{\alpha}a_i\psi_{g^i}(p)=\psi_{g_{\underline a}}(p).
\end{equation}

Let us recall a fundamental result proved by Gordan and Noether
(see \cite{GN} and \cite[2.7]{Lo}).

\begin{thm}\label{partialdiff}{\rm ({\bf Gordan--Noether Identity})}
Let notation be as above and let $F\in \mathbb K[x_0,\ldots ,x_N]$. Then:
\begin{equation}\label{2.7}
\sum_{i=0}^N\frac{\partial F}{\partial x_i}h_i=0\,
\Leftrightarrow\, F(\mathbf x)=F(\mathbf x+\lambda\psi_g(\mathbf x))\,\,\, \forall\lambda\in \mathbb K.
\end{equation}
\end{thm}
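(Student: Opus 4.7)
The plan is to prove each implication separately, with the content of the nontrivial direction reducing to a ``self--parallel'' property of the polynomial vector field $\psi_g$.

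The easy implication $(\Leftarrow)$ will follow by differentiation: assuming $F(\mathbf x)=F(\mathbf x+\lambda\psi_g(\mathbf x))$ identically in $\mathbf x,\lambda$, I would differentiate both sides with respect to $\lambda$ and evaluate at $\lambda=0$, so that the chain rule yields
$$0=\sum_{i=0}^N\frac{\partial F}{\partial x_i}(\mathbf x)\,h_i(\mathbf x).$$

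For the harder direction $(\Rightarrow)$, suppose $\sum_i \tfrac{\partial F}{\partial x_i}h_i\equiv 0$ and consider $P(\mathbf x,\lambda):=F(\mathbf x+\lambda\psi_g(\mathbf x))-F(\mathbf x)$ as a polynomial in $\lambda$. Since $P(\mathbf x,0)=0$, it suffices to prove $\partial P/\partial\lambda\equiv 0$, where
$$\frac{\partial P}{\partial\lambda}=\sum_i \frac{\partial F}{\partial x_i}\bigl(\mathbf x+\lambda\psi_g(\mathbf x)\bigr)\,h_i(\mathbf x).$$
The crucial input is the \emph{self--parallel lemma}: for all $\mathbf x$ and $\lambda$, $\psi_g(\mathbf x+\lambda\psi_g(\mathbf x))=c(\mathbf x,\lambda)\,\psi_g(\mathbf x)$ for some scalar $c(\mathbf x,\lambda)$. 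Granting this, I would replace $h_i(\mathbf x)$ by $c^{-1}\,h_i(\mathbf x+\lambda\psi_g(\mathbf x))$ in the sum above and apply the hypothesis at the shifted point $\mathbf x+\lambda\psi_g(\mathbf x)$ to conclude $\partial P/\partial\lambda\equiv 0$.

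The main obstacle, and the real content of the theorem, is the self--parallel lemma. My approach would proceed in two steps. First, differentiating the algebraic relation $g(\Phi_f(\mathbf x))\equiv 0$ with respect to each $x_k$ yields $\Hess_f(\mathbf x)\cdot\psi_g(\mathbf x)=0$ at every point $\mathbf x$, which forces the first--order Taylor coefficient in $\lambda$ of $\Phi_f(\mathbf x+\lambda\psi_g(\mathbf x))$ to vanish. The delicate part is upgrading this infinitesimal fact to the global identity $\Phi_f(\mathbf x+\lambda\psi_g(\mathbf x))=\Phi_f(\mathbf x)$ on the entire affine line; this requires an iterated Taylor argument combining the Euler identity for $g$, the kernel property of $\Hess_f$ at every point of the line, and a degree bound on the polynomial $\Phi_f(\mathbf x+\lambda\psi_g(\mathbf x))$ in $\lambda$. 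Once $\Phi_f$ is constant along such lines, the self--parallel property is automatic, because $\psi_g=\tfrac{1}{\rho}\nabla g\circ\Phi_f$ depends on $\mathbf x$ only through $\Phi_f(\mathbf x)$ and the scalar factor $\rho(\mathbf x)$.
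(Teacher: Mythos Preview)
The paper does not prove this theorem: it is stated with citations to \cite{GN} and \cite[2.7]{Lo} and then used as a black box in Corollary~\ref{thm:polar&perazzo}. So there is nothing in the paper to compare your argument against.

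Your outline is essentially the standard proof in those references. Both implications are set up correctly, and you have isolated the real content: the self--parallel property of $\psi_g$, which in turn reduces to showing $\Phi_f(\mathbf x+\lambda\psi_g(\mathbf x))\equiv\Phi_f(\mathbf x)$, since $\psi_g$ factors projectively through $\Phi_f$. The bootstrap you describe can be made precise as follows. Set $\eta(\lambda):=\Phi_f(\mathbf x+\lambda\mathbf h(\mathbf x))$ and assume inductively that $\eta(\lambda)=\eta(0)+O(\lambda^m)$ (the base case $m=1$ is trivial, $m=2$ is your infinitesimal step). Then $\nabla g(\eta(\lambda))=\nabla g(\eta(0))+O(\lambda^m)$; feeding this into the identity $\Hess_f(\mathbf x+\lambda\mathbf h)\cdot\nabla g(\eta(\lambda))=0$, valid at \emph{every} point of the line, and comparing with
\[
\eta'(\lambda)=\Hess_f(\mathbf x+\lambda\mathbf h)\cdot\mathbf h(\mathbf x)=\rho(\mathbf x)^{-1}\,\Hess_f(\mathbf x+\lambda\mathbf h)\cdot\nabla g(\eta(0)),
\]
one gets $\eta'(\lambda)=O(\lambda^m)$, hence $\eta_m=0$. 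Since $\eta$ is polynomial in $\lambda$ this terminates. One small correction: the Euler identity for $g$ plays no role in this induction (it yields $\sum_i f_i h_i=0$, i.e.\ that $f$ itself satisfies the hypothesis, which is used elsewhere in the paper but not here).
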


A  result used many times in the sequel is the following immediate consequence of the Gordan-Noether
Identity.

\begin{cor}\label{thm:polar&perazzo}
  Let $X = V(f) \subset \P^N$ be a  hypersurface with vanishing hessian and let notation be as above. Then
\begin{enumerate}
\item[i)] for every $p\in\p^N\setminus \Sing X$ such that $\Phi_X(p)\in Z_{\reg}$ we have
$<p,(T_{\Phi_X(p)}Z)^*>\subseteq \Phi_X^{-1}(\Phi_X(p))$;
\medskip
\item[ii)] for  $p\in\p^N$ general,  the irreducible component of $\overline{\Phi_X^{-1}(\Phi_X(p))}$
passing through $p$ is $<p,(T_{\Phi_X(p)}Z)^*>.$
In particular for $p\in\p^N$ general $\overline{\Phi_X^{-1}(\Phi_X(p))}$ is a union of linear spaces of dimension equal
to $\codim(Z)$
passing through $(T_{\Phi_X(p)}Z)^*$.
\medskip
\item[iii)] 
\begin{equation}\label{inclZ*}
Z^*\subseteq \Sing X.
\end{equation}
\medskip
\end{enumerate}
\end{cor}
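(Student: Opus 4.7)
The plan is to apply the Gordan--Noether Identity of Theorem \ref{partialdiff} with $F = f_i := \partial f/\partial x_i$ for each $i=0,\ldots,N$. The crucial preliminary observation is that, differentiating the identity $g(f_0,\ldots,f_N)\equiv 0$ with respect to $x_i$, the chain rule gives $\sum_{j=0}^N g_j\cdot\partial^2 f/\partial x_j\,\partial x_i = 0$; dividing by the common factor $\rho$ we obtain $\sum_j h_j\, \partial f_i/\partial x_j = 0$. This is exactly the left-hand side of the equivalence in (\ref{2.7}) applied to $F=f_i$, so Theorem \ref{partialdiff} yields $f_i(\mathbf{x}+\lambda\psi_g(\mathbf{x})) = f_i(\mathbf{x})$ for every $i$ and every $\lambda\in\mathbb K$.

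For part (i), this shows that $\Phi_X$ is constant on the affine line $\{p+\lambda\psi_g(p):\lambda\in\mathbb K\}$, hence the projective line $\langle p,\psi_g(p)\rangle$ sits in $\Phi_X^{-1}(\Phi_X(p))$. Running the same computation with each local equation $g^0,\ldots,g^\alpha$ of $Z$ at $\Phi_X(p)$, and more generally with the linear combinations $g_{\underline a}=\sum a_jg^j$ in place of $g$, equation (\ref{linTz}) says that the points $\psi_{g_{\underline a}}(p)$ sweep out $(T_{\Phi_X(p)}Z)^*$. Thus every $r\in(T_{\Phi_X(p)}Z)^*$ is joined to $p$ by a line contained in the fiber, giving $\langle p,(T_{\Phi_X(p)}Z)^*\rangle\subseteq\Phi_X^{-1}(\Phi_X(p))$.

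For part (iii), I would homogenize along the line $\langle p,\psi_g(p)\rangle$. Parametrizing the line by $(s,t)\mapsto sp+t\psi_g(p)$ and exploiting the fact that $f_i$ is homogeneous of degree $d-1\geq 1$ together with the identity $f_i(p+\lambda\psi_g(p))=f_i(p)$, one finds $f_i(sp+t\psi_g(p))=s^{d-1}f_i(p)$, which vanishes at $s=0$. Hence $\psi_g(p)\in V(f_0,\ldots,f_N)=\Sing X$. Repeating the argument with each $g_{\underline a}$ and invoking (\ref{linTz}) again shows $(T_{\Phi_X(p)}Z)^*\subseteq\Sing X$ for $p$ general; since $\Sing X$ is closed and $Z^*=\overline{\bigcup_{z\in Z_{\reg}}(T_zZ)^*}$, taking closures yields $Z^*\subseteq\Sing X$.

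Finally, for part (ii), by (\ref{eq:dimZdiff}) and the fiber-dimension theorem the general fiber of $\Phi_X$ has dimension $\codim Z = N-\dim Z$. For $p$ general we have $p\notin\Sing X$, so by (iii) $p\notin(T_{\Phi_X(p)}Z)^*$, and the linear join $\langle p,(T_{\Phi_X(p)}Z)^*\rangle$ has dimension $1+(N-1-\dim Z)=\codim Z$. Combined with (i), this linear space is a full-dimensional irreducible component of the fiber through $p$, proving the statement and showing in addition that each irreducible component of the general fiber is linear of dimension $\codim Z$ and passes through $(T_{\Phi_X(p)}Z)^*$. The main subtlety is the chain-rule derivation that cleanly cancels $\rho$ to land in the hypothesis of Theorem \ref{partialdiff}; once that step is secured, everything else reduces to (\ref{linTz}), homogeneity, and a dimension count.
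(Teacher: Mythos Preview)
Your proof is correct and follows essentially the same approach as the paper: the chain-rule derivation of $\sum_j h_j\,\partial^2 f/\partial x_i\partial x_j=0$, the application of Theorem~\ref{partialdiff} to each $f_i$, the use of (\ref{linTz}) to sweep out $(T_{\Phi_X(p)}Z)^*$, and a dimension count for (ii). The only minor omission is that in (ii) you should invoke generic smoothness of $\Phi_X$ to guarantee the general fiber is smooth at $p$, so that there is a \emph{unique} component through $p$; your argument as written shows the linear span is \emph{an} irreducible component through $p$, which is one step short of the stated conclusion.
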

\begin{proof}
Note that 
\begin{equation}\label{partial2f}
\sum_{i=0}^N\frac{\partial ^2f}{\partial x_i\partial
x_j}h_i=0\mbox{ for every\;\; } j=0,\ldots,N. 
\end{equation}

 This relation is obtained by  differentiating the equation
$g(\frac{\partial f}{\partial x_0},\ldots,\frac{\partial f}{\partial x_N})=0$ with respect to $x_j$ and applying the chain
rule. As a consequence, we get the following relation by Theorem
\ref{partialdiff}:
\begin{equation}\label{2.5}
   \frac{\partial f}{\partial x_i}(\mathbf x)=\frac{\partial f}{\partial x_i}(\mathbf x+\lambda\psi_g(\mathbf x))
   \mbox{ for every\;\; } j=0,\ldots,N,
\end{equation}
for every $\lambda\in\mathbb K$ and for every $g\in \mathbb K[y_0,\ldots, y_N]$ such that $g(\frac{\partial f}{\partial x_0},\ldots,\frac{\partial f}{\partial x_N})=0.$

Let  $p\in\p^N\setminus \Sing X$ such that $\Phi_X(p)\in Z_{\reg}$.  Let $r\in (T_{\Phi_X(p)}Z)^*$ be a general
point. By \eqref{linTz} we can suppose $r=\psi_g(p)$ so that \eqref{2.5} yields  that the line $<p,r>$ is contracted to the point $\Phi_X(p)$
and that $r=<p,r>\cap \Sing X$.
The generality of $r$ implies that the linear space $\p^{\codim(Z)}=<p,(T_{\Phi_X(p)}Z)^*>$
is contained in   
$\Phi_X^{-1}(\Phi_X(p))$, proving i) and also that $(T_{\Phi_X(p)}Z)^*\subset \Sing X$. Moreover since, by definition,  $Z^*$  is ruled by  the linear spaces  $(T_{\Phi_X(p)}Z)^*$ part iii) immediately follows.

If $p\in\p^N$ is general, then, by generic smoothness, the irreducible component of $\Phi_X^{-1}(\Phi_X(p))$ passing
through $p$  has dimension $\codim(Z)$ and it is smooth at $p$ so that it coincides with $<p,(T_{\Phi_X(p)}Z)^*>$, proving ii). 
\end{proof}

\section{The Perazzo map of  hypersurfaces with vanishing hessian}\label{Perazzomap}

Let us recall the so called {\it Reciprocity Law of Polarity} to be used later on in  the analysis of the geometry of $Z^*$.
We define first the notion of {\it degree  $s$ Polar hypersurface of $X=V(f)\subset\p^N$}.

\begin{defin}{\rm For every $s=1,\ldots, d-1$ and for every $p\in\p^N$ the {\it degree $s$ Polar of $X$ with respect to $p$} is the hypersurface
$$H^{s}_p(f):=V(\sum_{i_0+\ldots+i_N=s}\frac{\partial^sf}{\partial x_0^{i_0}\ldots x_N^{i_N}}(p)x_0^{i_0}\ldots x_N^{i_N})\subset\p^N.$$
By definition $\deg(H^s_p(f))=s$ if the polynomial on the right in the above expression is not identically zero. 
 Otherwise we naturally put  $H^s_p(f)=\p^N$.
For $s=1$ the hyperplane $H^1_p(f)$ is the polar hyperplane of $X$ with respect to $p$, which will be 
indicated simply by $H_p$. For $s=2$ the hypersurface $H^2_p(f)$ is a quadric hypersurface whose associated symmetric matrix is $\Hess_X(p)$ and we shall put, by abusing notation, $Q_p=H^2_p(f)$ if
the reference to $f$ is well understood. 
}
\end{defin}

We recall a classical result used repeatedly in the sequel. For a proof and other applications one can consult the first chapter of \cite{Do1}.
\medskip

\begin{prop}\label{reciprocity}{\rm({\bf Reciprocity Law of Polarity})} Let $X=V(f)\subset\p^N$ be a degree $d$ hypersurface. Then for every $s=1,\ldots, d-1$ and for every distinct points $p,q\in\p^N$  we have
$$p\in H^s_q(f)\;\;\iff q\in H^{d-s}_p(f).$$

In particular we have 
\begin{equation}\label{multrec}
\{p\in X\;:\;\mult_p(X)\geq s\}= \bigcap_{q\in\p^N}H^{d-s+1}_q.
\end{equation}
\end{prop}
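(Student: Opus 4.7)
The plan is to exploit the binary form $F(\lambda,\mu):=f(\lambda q+\mu p)$, which since $f$ is homogeneous of degree $d$ is itself a homogeneous polynomial of degree $d$ in $(\lambda,\mu)$. It therefore admits a unique decomposition
\[
F(\lambda,\mu)=\sum_{s=0}^{d}\lambda^{d-s}\mu^{s}\,c_{s}(p,q),
\]
and each coefficient $c_{s}(p,q)$ can be read off in two distinct ways by Taylor expansion, in $\mu$ and in $\lambda$ respectively.

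First I would apply Taylor's formula in $\mu$ at $\mu=0$: iterating the chain rule gives $\partial_{\mu}^{s}F=((p\cdot\nabla)^{s}f)(\lambda q+\mu p)$, and setting $\mu=0$ together with the homogeneity of $\partial^{J}f$ yields $\lambda^{d-s}(p\cdot\nabla)^{s}f(q)$. Expanding $(p\cdot\nabla)^{s}$ by the multinomial theorem and dividing by $s!$ identifies $c_{s}(p,q)$ with the polynomial defining $H^{s}_{q}(f)$ evaluated at $p$, up to a universal non-zero scalar depending only on $s$ and $d$. The symmetric Taylor expansion in $\lambda$ at $\lambda=0$ identifies the same $c_{s}(p,q)$ with the polynomial defining $H^{d-s}_{p}(f)$ evaluated at $q$, up to the same scalar. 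Equating the two expressions for $c_{s}(p,q)$ then yields $p\in H^{s}_{q}(f)\iff q\in H^{d-s}_{p}(f)$.

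For the equality \eqref{multrec}, I would invoke the just-proved reciprocity with polar degree $d-s+1$: the point $p$ lies in $H^{d-s+1}_{q}(f)$ for every $q\in\p^{N}$ if and only if $q\in H^{s-1}_{p}(f)$ for every $q\in\p^{N}$, which is equivalent to the polynomial defining $H^{s-1}_{p}(f)$ vanishing identically, i.e.\ $\partial^{I}f(p)=0$ for every multi-index $I$ with $|I|=s-1$. An iterated application of Euler's identity $\sum_{j}x_{j}\partial_{j}g=(\deg g)\,g$ to the homogeneous polynomials $\partial^{I}f$ (all of degree $\ge d-s+1>0$, since $s\le d-1$) propagates the vanishing downward from order $s-1$ to all lower orders, forcing $\partial^{I}f(p)=0$ for every $|I|\le s-1$. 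This is precisely the condition $\mult_{p}(X)\ge s$, and it automatically contains $p\in X$ since the case $|I|=0$ gives $f(p)=0$.

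The argument is essentially careful bookkeeping of Taylor coefficients. The only delicate point will be tracking the multinomial factors $\binom{s}{I}$ and the factorials $s!$, $(d-s)!$ so that the two readings of $c_{s}(p,q)$ are identified with the paper's defining polynomials up to the \emph{same} non-zero universal scalar, which is what guarantees the equality of the resulting hypersurfaces and not merely a proportionality at isolated points. One must also allow the degenerate case in which the defining polynomial of $H^{\bullet}_{\bullet}(f)$ vanishes identically and the polar hypersurface equals the whole of $\p^{N}$; far from being an obstacle, this is precisely the situation that drives the multiplicity characterization \eqref{multrec}.
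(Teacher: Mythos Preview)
The paper does not actually prove this proposition; it is recorded as a classical result with a reference to Dolgachev \cite{Do1}. Your approach via the binary form $F(\lambda,\mu)=f(\lambda q+\mu p)$ and its two Taylor expansions is precisely the classical argument one finds there, and it is correct. One small remark: the two expansions identify $c_{s}(p,q)$ with the respective polar evaluations up to the nonzero constants $1/s!$ and $1/(d-s)!$, not literally ``the same'' scalar as you write; this is immaterial for the equivalence since only their common vanishing matters, but it is worth stating accurately. Your deduction of \eqref{multrec} via reciprocity and downward propagation by Euler's identity is also the standard route and is fine.
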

\medskip

Perazzo introduced in \cite{Pe} the notion of (Perazzo) rank of a cubic hypersurface with vanishing hessian, which we now extend to the general case. Although he does not explicitly
define the rational  map described below,  its use  was implicit in his analysis, see {\it loc. cit}.

\begin{defin} \rm
 Let $X = V(f) \subset \P^N$ be a reduced hypersurface with vanishing hessian, let $\Phi_X:\P^N \dashrightarrow \P^N$ be  its polar map and  let 
$Z =\overline{ \Phi_X(\P^N)}\subsetneq (\p^N)^*$ be its polar image. {\it The Perazzo map of $X$} is the rational map:
$$\begin{array}{cccc} \mathcal{P}_X: & \P^N & \dashrightarrow & \mathbb{G}(\codim(Z)-1,N) \\
\ & p & \mapsto & (T_{\Phi_X(p)}Z)^* \end{array}$$ defined as above in the open set $\mathcal{U} = \Phi_X^{-1}(Z_{\reg})$,
where $Z_{\reg}$ is the locus of smooth  points of $Z$. 

With this notation we have that for $p\in \p^N$ general point $\Sing Q_p=(T_{\Phi_X(p)}Z)^*$. Moreover by definition
$\mathcal P_X$ is the composition of $\Phi_X$ with the dual Gauss map of $Z$ introduced in Definition \ref{dualGaussmap}.

The image of the Perazzo map will be denoted by $W_X=\overline{\mathcal{P}_X(\P^N)} \subset \mathbb{G}(\codim(Z)-1,
N)$, or simply by $W$,  and  $\mu = \dim W$ is called the {\it Perazzo rank of $X$}.
\end{defin}

We shall always identify $(\p^N)^{**}$ with the original $\p^N$ so that, if $\codim(Z)=1$, we also use
the identification $\mathbb G(0,N)=\p^N$. Let us remark that by definition and with the previous identifications we have 
\begin{equation}\label{Zdualdef}
Z^*=\displaystyle\overline{\bigcup_{z\in Z_{\reg}}(T_zZ)^*}\subset \p^N.
\end{equation}
\medskip

\begin{rmk}\label{rm:mu0}{\rm If $\mu=0$, then $Z\subset(\p^N)^*$ is a linear space 
and
$X$ is a cone such that $\Vert(X)=Z^*=\p^{\codim(Z)-1}\subseteq X$. 

Therefore if $X=V(f)\subset\p^N$ is a hypersurface with vanishing hessian,  not a cone, then $\mu\geq 1$.

If $\codim(Z)=1$ and if $Z=V(g)$, then $\mathcal P_X=\psi_g$, where $\psi_g$ is the rational map introduced
in Definition \ref{defpsig}, and $W=Z^*$.
}
\end{rmk}

The following result  will be useful to determine  the  structure of particular classes of  cubic hypersurfaces
with vanishing hessian defined in the sequel. First let us remark that  for a cubic hypersurface  and for $r,s\in\p^N$ we have: 
$r\in\Sing Q_s$ if and only if $s\in\Sing Q_r$.

\begin{thm} \label{Perazzolinear}
 Let $X = V(f) \subset \P^N$ be a  cubic hypersurface with vanishing hessian. 
 Let $w = [(T_{\Phi_X(p)}Z)^*] \in W_X\subset \mathbb{G}(\codim(Z)-1,N)$ be a general point
 and let $r \in (T_{\Phi_X(p)}Z)^*$ be a general point with $p\in\p^N$ general.
Then:
 \begin{equation}\label{linearfiber}
 \overline{\mathcal P_X^{-1}(w)}=\bigcap_{r\in (T_{\Phi_X(p)}Z)^*=\Sing Q_p} \Sing Q_r=\p^{N-\mu}_w.
 \end{equation} 
 \end{thm}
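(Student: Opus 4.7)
The plan is to reduce the statement to linear algebra about the cubic form $f$ and then invoke lower semi-continuity of Hessian rank plus the fiber dimension theorem. By \eqref{eq:TpZ} one has $T_{\Phi_X(p)}Z=\p(\Im\Hess_X(p))$, so its dual (annihilator) in $\p^N$ is $\p(\ker\Hess_X(p))=\Sing Q_p$. In particular, on the open set $U=\Phi_X^{-1}(Z_{\reg})$ where $\mathcal{P}_X$ is defined, the condition $\mathcal{P}_X(q)=w$ is equivalent to $\Sing Q_q=\Sing Q_p$. Set $\Lambda:=\bigcap_{r\in\Sing Q_p}\Sing Q_r\subseteq\p^N$, which is manifestly a linear subspace, and use the cubic reciprocity $r\in\Sing Q_s\iff s\in\Sing Q_r$ recalled just before the theorem to rewrite $q\in\Lambda\iff \Sing Q_p\subseteq\Sing Q_q$; in particular $p\in\Lambda$.

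The inclusion $\overline{\mathcal{P}_X^{-1}(w)}\subseteq\Lambda$ is immediate: if $q\in U$ and $\mathcal{P}_X(q)=w$ then $\Sing Q_q=\Sing Q_p$, hence $q\in\Lambda$, and closure preserves this since $\Lambda$ is closed. For the reverse inclusion the idea is to use lower semi-continuity of $q\mapsto\rk\Hess_X(q)$. Since $p$ is general in $\p^N$, by \eqref{eq:dimZdiff} we have $\rk\Hess_X(p)=\dim Z+1$, the generic (maximal) rank. Because $p\in\Lambda$ and $\Lambda$ is an irreducible linear subspace, the open locus where the Hessian rank attains its generic value meets $\Lambda$, hence is dense in $\Lambda$. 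For a general $q\in\Lambda$ one therefore has $\dim\Sing Q_q=\dim\Sing Q_p$, and combined with $\Sing Q_p\subseteq\Sing Q_q$ this forces the equality $\Sing Q_q=\Sing Q_p$, so $\mathcal{P}_X(q)=w$. Thus a dense open subset of $\Lambda$ is contained in $\mathcal{P}_X^{-1}(w)$, proving $\Lambda\subseteq\overline{\mathcal{P}_X^{-1}(w)}$.

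Finally, the linear subspace $\Lambda$ has the predicted dimension $N-\mu$ by the fiber dimension theorem applied to the dominant rational map $\mathcal{P}_X:\p^N\dashrightarrow W$ with target of dimension $\mu$: since $p$ is general, $w=\mathcal{P}_X(p)$ is a general point of $W$, and so $\dim\overline{\mathcal{P}_X^{-1}(w)}=N-\mu$. I expect the most delicate point to be the semi-continuity step, but it is essentially automatic here because $p$ itself sits in $\Lambda$ and realizes the generic rank; the only cubic-specific input is the reciprocity $r\in\Sing Q_s\iff s\in\Sing Q_r$, which crucially relies on the fact that the entries of $\Hess_X(p)$ are linear in $p$ and would fail for degree $d\geq 4$.
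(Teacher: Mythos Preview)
Your proof is correct and follows essentially the same route as the paper's: both identify $(T_{\Phi_X(p)}Z)^*$ with $\Sing Q_p$, use the cubic reciprocity $r\in\Sing Q_s\iff s\in\Sing Q_r$ to rewrite membership in $\Lambda$ as the containment $\Sing Q_p\subseteq\Sing Q_q$, and then argue that for general $q$ this containment is an equality. The paper's proof compresses your semi-continuity step into the phrase ``by the generality of $p,p'$'', whereas you spell it out explicitly via the rank of $\Hess_X$ and the irreducibility of $\Lambda$; your version is more careful but not a different argument.
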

\begin{proof} By definition $\overline{\mathcal P_X^{-1}(w)}$ is the closure of  the set of all (general) $p'\in\p^N$ such that ${\rm Sing}(Q_{p'})={\rm Sing}(Q_{p})$. This happens if and only if
$r\in {\rm Sing}(Q_{p})$ implies $r\in {\rm Sing}(Q_{p'})$ (or vice versa by symmetry and by the generality of $p,p'\in\p^N$), which in turn happens if and only if $p'\in {\rm Sing}(Q_{r})$ for every $r\in \Sing Q_p$, yielding the first equality in \eqref{linearfiber} and concluding the proof.
\end{proof}
\medskip

\begin{rmk}\label{rm:linearfiber}
{\rm

\begin{enumerate}
\item[i)] In \cite[\S 5, pg. 339]{Pe} Perazzo states that $\overline{\mathcal P_X^{-1}(w)}$
is a linear space of dimension $N-\mu$ but in our opinion his arguments contain some gaps.

Indeed, if  $\codim(Z)=1$, $(T_{\Phi_X(p)}Z)^*=r$ is a point and \eqref{linearfiber} gives
$\overline{\mathcal P_X^{-1}(w)}=\Sing Q_r$.
Perazzo wrongly claims that $\Sing Q_r=\overline{\mathcal P_X^{-1}(w)}$  for $r\in (T_{\Phi_X(p)}Z)^*$ general also when  $\dim((T_{\Phi_X(p)}Z)^*)=\codim(Z)-1>0$. To prove this wrong claim  Perazzo erroneously assumes that the tangent space $T_rZ^*$ is constant for $r\in (T_{\Phi_X(p)}Z)^*$ general  when $\dim(T_{\Phi_X(p)}Z)^*)>0$. This  claim would be  equivalent to $\dim(Z^*)=\codim(Z)-1+\mu$ as we shall see in  Proposition \ref{estimateF} below, a fact used by Perazzo repeatedly in his paper.

Perazzo probably  confuses the contact locus of a general
tangent space to $Z^*$, that is the closure of a general fiber of the Gauss map of $Z^*$, with the contact locus
of  a general  hyperplane tangent to $Z^*$, which is a linear space  of the form $(T_{\Phi_X(p)}Z)^*$. For $\codim(Z)>1$ these contact loci can differ and  the linear spaces $\Sing Q_r$ can depend on  $r\in (T_{\Phi_X(p)}Z)^*$. 
\medskip

\item[ii)] Formula \eqref{linearfiber} is a particular case of general results on linear systems
of quadrics  {\it with tangential defect}, see \cite{Degoli}, \cite{Landsberg}
and particularly  \cite[Corollary 1]{Alzati} where a proof of \eqref{linearfiber} is deduced 
from these general facts.
As shown by the examples in the last pages of \cite{Alzati} the fibers
of the Perazzo map are not necessarily linear for hypersurfaces of degree greater
than three. The general structure of the fibers of the Perazzo map for arbitrary hypersurfaces
is, to the best of our knowledge, unknown.

\end{enumerate}
} 
\end{rmk}
\medskip

In order to better clarify the wrong claims made by Perazzo and outlined in part i) of the previous Remark we collect
some facts which will be also used later.
\medskip

\begin{prop}\label{estimateF} Let notation be as above and let $F=\P^{\dim(Z)-\mu}\subset Z$ be the closure of the fiber of the Gauss map of $Z$ passing through the general point $\Phi_X(p)\in Z$. Then:
\begin{enumerate}
\item[a)] $\dim(Z^*)\leq\codim(Z)-1+\mu$;
\medskip
\item[b)] if $r\in (T_{\Phi_X(p)}Z)^*\subset Z^*$ is a general point, then
\begin{equation}\label{FinclTrZ}
F\subseteq (T_rZ^*)^*
\end{equation} 
with equality if and only if $\dim(Z^*)=\codim(Z)-1+\mu$. 
\medskip

\item[c)] If   $\codim(Z)=1$, then  $F^*=T_rZ^*$.
\medskip
\item[d)] If $w=[(T_{\Phi_X(p)}Z)^*]$, then
\begin{equation}\label{intHq}
 \bigcap_{q \in \p^{N-\mu}_w } H_q = F^*\supseteq T_rZ^*. 
\end{equation}
\end{enumerate}
\end{prop}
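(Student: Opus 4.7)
The plan is to handle the four parts in order, relying on two core ingredients: a parameter count for the ruling of $Z^{*}$ by the linear spaces $(T_{z}Z)^{*}$, and the classical reflexivity theorem relating the Gauss maps of $Z$ and $Z^{*}$.

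For (a), since $Z^{*}=\overline{\bigcup_{z\in Z_{\reg}}(T_{z}Z)^{*}}$ is ruled by linear spaces of dimension $\codim(Z)-1$, and the map $z\mapsto (T_{z}Z)^{*}$ has the same fibres as the Gauss map $\mathcal G_{Z}$ of $Z$ (namely the $F$'s, of dimension $\dim Z-\mu$), its image has dimension $\mu$. A standard incidence/parameter count then gives $\dim Z^{*}\le (\codim(Z)-1)+\mu$.

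For (b), I would invoke biduality/reflexivity for the pair $(Z,Z^{*})$. A general $r\in(T_{\Phi_{X}(p)}Z)^{*}$ is a smooth point of $Z^{*}$ corresponding to a hyperplane of $(\p^{N})^{*}$ that is tangent to $Z$ along the entire Gauss fibre $F$ through $\Phi_{X}(p)$ (because along $F$ the tangent space to $Z$ is constant). Reflexivity identifies the contact locus of this hyperplane with $Z$ as the linear space $(T_{r}Z^{*})^{*}$, giving $F\subseteq (T_{r}Z^{*})^{*}$. Since $\dim F=\dim Z-\mu$ and $\dim (T_{r}Z^{*})^{*}=N-1-\dim Z^{*}$, equality holds exactly when $\dim Z^{*}=\codim(Z)-1+\mu$, i.e.\ the bound of (a) is attained. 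Part (c) is then immediate: if $\codim(Z)=1$, the map $z\mapsto(T_{z}Z)^{*}$ takes values in $\p^{N}$ and coincides with the Gauss map of $Z$, so $Z^{*}$ is just its image and $\dim Z^{*}=\mu=\codim(Z)-1+\mu$. Hence (b) yields $F=(T_{r}Z^{*})^{*}$, and dualizing gives $F^{*}=T_{r}Z^{*}$.

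For (d) I would combine Theorem \ref{Perazzolinear} with the factorisation $\mathcal P_{X}=\mathcal G^{*}_{Z}\circ \Phi_{X}$, where $\mathcal G^{*}_{Z}$ is the dual Gauss map of $Z$. Since $\mathcal G^{*}_{Z}$ has the same fibres as $\mathcal G_{Z}$, the fibre of $\mathcal G^{*}_{Z}$ over $w$ is exactly $F$, so $\p^{N-\mu}_{w}=\overline{\mathcal P_{X}^{-1}(w)}=\overline{\Phi_{X}^{-1}(F)}$ and $\Phi_{X}$ restricted to $\p^{N-\mu}_{w}$ is dominant onto $F$. Since $x\in H_{q}$ iff $\Phi_{X}(q)\cdot x=0$, the intersection $\bigcap_{q\in\p^{N-\mu}_{w}}H_{q}$ is annihilated by a Zariski-dense subset of $F$ and therefore equals $F^{*}$. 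The inclusion $F^{*}\supseteq T_{r}Z^{*}$ is simply the dual of $F\subseteq (T_{r}Z^{*})^{*}$ proved in (b). The main obstacle is the careful application of reflexivity in (b), where one must distinguish the contact locus of a general tangent hyperplane to $Z^{*}$ (a fibre of $\mathcal G_{Z^{*}}$) from the contact locus of a general tangent hyperplane to $Z$ at a smooth point of the ruling $(T_{\Phi_{X}(p)}Z)^{*}$ — precisely the confusion flagged in Remark \ref{rm:linearfiber}. Once this is handled cleanly, the remaining assertions follow essentially formally from dimension comparison and duality of linear subspaces.
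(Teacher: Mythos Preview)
Your proposal is correct and follows essentially the same route as the paper: the parameter count for (a), reflexivity for the inclusion $F\subseteq(T_rZ^*)^*$ in (b), and for (d) the identification of $\{H_q:q\in\p^{N-\mu}_w\}$ with (a dense set of) the hyperplanes containing $F^*$ via $[H_q]=\Phi_X(q)\in F$. Your treatment of (c) is in fact slightly more explicit than the paper's, since you spell out that $\codim(Z)=1$ forces $\dim Z^*=\mu=\codim(Z)-1+\mu$ and hence equality in (b), whereas the paper simply asserts $F^*=T_rZ^*$ in that case; likewise your dimension comparison $\dim F=\dim Z-\mu$ versus $\dim(T_rZ^*)^*=N-1-\dim Z^*$ makes the equality criterion in (b) transparent.
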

\begin{proof} Let us remark that \eqref{Zdualdef} yields $\dim(Z^*)\leq\codim(Z)-1+\mu$ with equality 
if and only if through a general point $r\in Z^*$ there passes a finite number of  linear spaces of the form  $(T_{\Phi_X(p)}Z)^*$,
proving a).

Let  $r\in (T_{\Phi_X(p)}Z)^*\subset Z^*$ be a general point. 
By Reflexivity  $(T_rZ^*)^*$ is the contact locus on $Z$ of the hyperplane $r^*\in Z$, tangent to $Z$ at the point
$\Phi_X(p)$ (to deduce this one remarks that  $r\in (T_{\Phi_X(p)}Z)^*$). Thus, since  $F$ is the contact locus on $X$ of $T_{\Phi_X(p)}Z$,
we get 
$$
F\subseteq (T_rZ^*)^*,
$$
with equality if and only if $\dim(Z^*)=\codim(Z)-1+\mu$. 
Furthermore if   $\codim(Z)=1$, then  $F^*=T_rZ^*$ and the proofs of b)  and c) are complete. 

Moreover, if $\dim(Z^*)=\codim(Z)-1+\mu$, we claim that there exists a unique linear space of the form $(T_{\Phi_X(p)}Z)^*$ passing through a general point
of $Z^*$. Indeed since 
$F^*=T_rZ^*$, $T_rZ^*$  is independent of $r\in (T_{\Phi_X(p)}Z)^*$. Therefore $(T_{\Phi_X(p)}Z)^*$
is contained in the contact locus on $Z^*$ of the general tangent space $T_rZ^*$ from which it follows that $(T_{\Phi_X(p)}Z)^*$
is the (closure of the) fiber of the Gauss map of $Z^*$ passing through $r$, proving the claim.

If $H$ is a general hyperplane such that $H \supseteq F^*$, 
then  $[H] \in (F^*)^*=F\subset  Z$ so that  $[H]=\Phi_X(q)$ with $q\in\p^{N-\mu}_w$ and $H=H_q$. Thus 
$$
\dy \bigcap_{q \in \p^{N-\mu}_w } H_q = F^*\supseteq T_rZ^*,
$$
proving $d)$.
\end{proof}
\medskip

\begin{thm} \label{cones}
 Let $X = V(f) \subset \P^N$ be an irreducible  cubic hypersurface with vanishing hessian, not a cone, with $\codim(Z)=1$. 
 Let $r = [(T_{\Phi_X(p)}Z)^*] \in W=Z^*\subset \mathbb{P}^{N}$
 with $p\in\p^N$ general 
 and let $X_r = \overline{\mathcal P_X^{-1}(r)} \cap X$, which is equidimensional of dimension
$N-\dim(Z^*)-1$.
Then:
\begin{equation}\label{Perazzoestimate}
\dim(Z^*)\leq\frac{N-1}{2}.
\end{equation}
\end{thm}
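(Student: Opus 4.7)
The approach I would take is to show that the tangent variety $TZ^{*}:=\overline{\bigcup_{r\in(Z^{*})_{\mathrm{sm}}}T_{r}Z^{*}}$ is contained in $X$, so that $\dim TZ^{*}\leq N-1$, and then exploit a standard dimension formula for the tangent variety.

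The key step rests on Corollary~\ref{thm:polar&perazzo}~iii), which gives $Z^{*}\subseteq\Sing X$. Combined with the hypothesis that $X$ is a non-conical cubic—so that no point of $X$ can have multiplicity $3=\deg X$—this forces $\mult_{r}X=2$ for a general $r\in Z^{*}$. Consequently, for any two general $r,r'\in Z^{*}$ the secant line $\langle r,r'\rangle$ meets $X$ with multiplicity at least $2+2=4>3$, which forces $\langle r,r'\rangle\subseteq X$. Letting $r'\to r$ along an analytic arc in $Z^{*}$, each tangent line to $Z^{*}$ at a smooth point appears as a limit of lines lying on the closed set $X$, so $T_{r}Z^{*}\subseteq X$ for general $r$; taking closures yields $TZ^{*}\subseteq X$.

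Since $TZ^{*}$ is an irreducible subvariety of the hypersurface $X$, one has $\dim TZ^{*}\leq N-1$. If the Gauss map of $Z^{*}$ is generically finite (i.e.\ $Z^{*}$ is non-developable), the classical identity $\dim TZ^{*}=2\dim Z^{*}=2\mu$ gives $2\mu\leq N-1$, i.e.\ $\mu\leq(N-1)/2$, which is the required bound.

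The main obstacle is the developable case: when the Gauss map of $Z^{*}$ has positive-dimensional fibers (classically linear subspaces $\mathbb{P}^{k}\subseteq Z^{*}\subseteq\Sing X$, $k\geq 1$), one only has $\dim TZ^{*}=2\mu-k<2\mu$, and the last inequality does not close the argument. To handle this case one should combine Proposition~\ref{estimateF} (which gives the inclusion $T_{r}Z^{*}\subseteq\mathbb{P}^{N-\mu}_{w}$ and describes when equality holds) with the non-cone hypothesis on $X$. Concretely, I expect that developability forces the equality $\mathbb{P}^{N-\mu}_{w}=T_{r}Z^{*}$, and then the structural description of $X_{r}=X\cap\mathbb{P}^{N-\mu}_{w}$ as a cubic cone of dimension $N-\mu-1$ with vertex through $r$, ruled by the lines contracted by $\Phi_{X}$, should be incompatible with $X$ being an irreducible non-conical cubic hypersurface, thereby closing the case.
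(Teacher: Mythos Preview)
Your containment $SZ^*\subseteq X$ (hence $TZ^*\subseteq X$) is correct and a natural first observation. The gap is in the next step: the assertion that a generically finite Gauss map on $Z^*$ forces $\dim TZ^*=2\dim Z^*$ is false. A smooth quadric $Q^n\subset\langle Q^n\rangle=\mathbb{P}^{n+1}$ has Gauss map an isomorphism, yet $TQ^n=\mathbb{P}^{n+1}$ has dimension $n+1<2n$ for every $n\geq 2$. This objection is not academic in the present setting: Example~\ref{exdet} exhibits a cubic in $\mathbb{P}^6$ whose $Z^*$ is a smooth quadric surface in a $\mathbb{P}^3\subset\mathbb{P}^6$, so $\mu=2$ while $\dim TZ^*=3$; your inequality $\dim TZ^*\leq N-1$ then reads $3\leq 5$ and says nothing about $\mu$. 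Your proposed rescue for the developable case is also untenable on dimensional grounds: the equality $\mathbb{P}^{N-\mu}_w=T_rZ^*$ would require $N-\mu=\mu$, i.e.\ $N=2\mu$, contradicting the very bound you are trying to establish.

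The paper bypasses the global tangent variety entirely. The decisive input, absent from your argument, is the inclusion of a \emph{single} tangent space $T_rZ^*$ into the Perazzo fiber: from $Z^*\subseteq\Sing X$ one has $T_rZ^*\subseteq T_r\Sing X=\Sing Q_r$, and $\Sing Q_r=\mathbb{P}^{N-\mu}_r$ by Theorem~\ref{Perazzolinear}. Combining this with Proposition~\ref{estimateF}(c),(d) and reciprocity, one identifies $T_rZ^*$ with the locus of triple points of the cubic $X_r=X\cap\mathbb{P}^{N-\mu}_r$; in particular $T_rZ^*\subseteq X_r$. Since $\mathbb{P}^{N-\mu}_r$ is a general Perazzo fiber it is not contained in $X$, so $X_r$ is an honest hypersurface of dimension $N-\mu-1$, whence $\mu=\dim T_rZ^*\leq N-\mu-1$. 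Note that once the localization $T_rZ^*\subseteq\mathbb{P}^{N-\mu}_r$ is in hand, your own observation $T_rZ^*\subseteq X$ already closes the argument; what really does the work is confining the tangent space to a single fiber, not a global dimension count for $TZ^*$.
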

\begin{proof} 
Let us consider the cubic hypersurface $X_r=\P^{N- \dim(Z^*)}_r \cap X$, which could also 
have some non reduced component.

Let  $\tilde{H}_p$ be 
the polar hyperplane of a point $p \in \P_r^{N- \dim(Z^*)}$ with respect to $X_r$. One immediately sees  that 
$\tilde{H}_p = H_p \cap \P^{N- \dim(Z^*)}_r $. Since $\codim(Z)=1$, letting  $F$ be the closure of the fiber of the Gauss map of $Z$ passing through $\Phi_X(p)$, we have  $F^*=T_rZ^*$  by part b) of Proposition \ref{estimateF} so that by \eqref{intHq}
$$\bigcap_{p \in \P_r^{N- \dim(Z^*)}} H_p=T_rZ^*.$$
Moreover
$$T_rZ^*\subseteq T_r\Sing X=\Sing Q_r=\p^{N-\dim(Z^*)}_r,$$
where the first inclusion follows from $Z^*\subseteq\Sing X$, the first equality from
definition of the scheme $\Sing X$ and the last equality from  \eqref{linearfiber} applied to $W=Z^*$.

Therefore
\begin{equation}\label{tripleXp}
\dy \bigcap_{p \in \P_r^{N- \dim(Z^*)}} \tilde{H}_p = (\dy \bigcap_{p \in \P_r^{N- \dim(Z^*)}} H_p ) \cap \p^{N-\dim(Z^*)}_w = T_rZ^* \cap \p^{N-\dim(Z^*)}_r =T_rZ^*.
\end{equation}

By Proposition \ref{reciprocity} the linear space $T_rZ^*$ is exactly the locus of points of multiplicity three of the cubic hypersurface $X_r\subset\p^{N-\dim(Z^*)}_r$. Thus  the cubic hypersurface $X_r\subset \p^{N-\dim(Z^*)}$ is a cone whose vertex is $T_rZ^*$.  In particular $\dim(Z^*)=\dim(T_rZ^*)\leq N-\dim(Z^*)-1$, proving
\eqref{Perazzoestimate}. 
\end{proof}
\medskip

The linearity of the fibers has strong consequences on the geometry of a cubic hypersurface with vanishing hessian. Let us recall
some easy and well known facts on {\it congruences of order one of linear spaces}, that is irreducible families $\Theta\subset\mathbb G(\beta, N)$
of linear spaces of dimension $\beta>0$ such that through a general point of $\P^N$ there passes a unique member of the family.
Let us remark that the previous condition forces $\dim(\Theta)=N-\beta$ and that the tautological map $p:\mathcal U\to \p^N$
from the universal family $\pi:\mathcal U\to \Theta$ is birational onto $\p^N$.

Let notation be as above and let 
$$V=\{q\in\p^N\,;\, \#(p^{-1}(q))\geq 2\}=\{q\in\p^N\,;\, \dim(p^{-1}(q))>0\}\subset\p^N$$
be the so called {\it jump (or branch) locus of $\Theta$}.
\medskip

The easiest examples of congruences of linear spaces of dimension $\beta$ is given by the family of linear spaces
of dimension $\beta+1$ passing through a fixed linear space $L=\p^beta\subset\p^N$.

The previous examples motivate the following definition.

\begin{defin}\rm
An irreducible cubic hypersurface $X\subset\p^N$ with vanishing hessian, not a cone, will be  called a {\it Special Perazzo Cubic Hypersurface} if the general fibers of its Perazzo map form a congruence of linear spaces passing through
a fixed $\p^{N-\mu-1}$. 
\end{defin}
\medskip

Special Perazzo Cubic Hypersufaces will be treated  in Section \ref{examplesmall} following very closely the original treatment  of Perazzo
and providing canonical forms and explicit geometrical descriptions in any dimension.
Let us remark that for  Special Perazzo Cubic Hypersurfaces the linear span of two general fibers is a $\p^{N-\mu+1}$
which is strictly contained in $\p^N$
if $\mu>1$. For $\mu=1$ the general fibers determine a pencil of hyperplanes and this case will  be treated 
in arbitrary dimension in Theorem \ref{mu = 1}.

To ensure that a cubic hypersuface is a Special Perazzo Cubic Hypersurface it is sufficient to control the linear span (or equivalently
the intersection) of two general fibers. Indeed we have the following easy result, whose proof is left to the reader.
\medskip

\begin{lema}\label{sistema especial de Perazzo}
  Let $X = V(f) \subset \P^N$ be an irreducible cubic hypersurface with vanishing hessian, not a cone.
Let $w_1,w_2 \in W$ be general points, let $\overline{\mathcal P_X^{-1}(w_i)}=\p^{N-\mu}_{w_i}$ be the corresponding fibers of the Perazzo map. Suppose that $\p^{N-\mu}_{w_1} \cap \p^{N-\mu}_{w_2} = \P^{N-\mu-1}$. 

Then $X\subset\p^N$ is a Special Perazzo Cubic Hypersurface.
\end{lema}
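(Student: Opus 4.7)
The plan is to apply the lemma's hypothesis to a third general fiber, derive a clean dichotomy for its position, and close the argument using the irreducibility of $W$. I assume $\mu \geq 2$ throughout, the borderline case $\mu = 1$ being treated separately in Theorem \ref{mu = 1}.

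First I fix notation: set $L_i := \p^{N-\mu}_{w_i}$ for $i=1,2$, $\Lambda := L_1 \cap L_2 = \p^{N-\mu-1}$, and $M := \langle L_1, L_2\rangle = \p^{N-\mu+1}$, so that $M \subsetneq \p^N$ since $\mu \geq 2$. Pick a third general fiber $L_3 := \p^{N-\mu}_{w_3}$. By irreducibility of $W$ and genericity of $w_3$ (with respect to both $w_1$ and $w_2$), the hypothesis of the lemma applies to the pairs $(L_1, L_3)$ and $(L_2, L_3)$, so $\Lambda_{13} := L_1 \cap L_3$ and $\Lambda_{23} := L_2 \cap L_3$ are hyperplanes of $L_3$; since $L_1, L_2 \subset M$, both $\Lambda_{13}$ and $\Lambda_{23}$ lie in $M$.

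The key dichotomy is now immediate. If $\Lambda_{13} = \Lambda_{23}$, this common hyperplane lies in $L_1 \cap L_2 = \Lambda$ and has the same dimension $N-\mu-1$, hence equals $\Lambda$; in particular $L_3 \supseteq \Lambda$. If instead $\Lambda_{13} \neq \Lambda_{23}$, they are two distinct hyperplanes of $L_3$ and therefore span $L_3$; being both contained in $M$, this yields $L_3 = \langle \Lambda_{13}, \Lambda_{23}\rangle \subseteq M$. Introduce the closed subvarieties
$$W' := \{w\in W : L_w \supseteq \Lambda\}\quad\text{and}\quad W'' := \{w\in W : L_w \subseteq M\}.$$
The dichotomy shows that $W = W' \cup W''$. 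Since $W$ is irreducible (as the closure of the image of the rational map $\mathcal P_X$ from the irreducible $\p^N$), one of the two must equal $W$. However $W = W''$ is excluded, since it would force $\p^N = \bigcup_{w\in W} L_w \subseteq M \subsetneq \p^N$, which is absurd. Hence $W = W'$: every (general) fiber of the Perazzo map contains the fixed $\p^{N-\mu-1} = \Lambda$, and by definition $X$ is a Special Perazzo Cubic Hypersurface.

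The only point requiring some care is verifying that the hypothesis genuinely passes to the two pairs involving the general $L_3$. This follows from the fact that the locus $\{(w,w')\in W \times W : L_w \cap L_{w'} = \p^{N-\mu-1}\}$ contains a Zariski-dense open of $W \times W$ by assumption, so $w_3$ can be chosen so that both $(w_1, w_3)$ and $(w_2, w_3)$ lie in this locus. Beyond this, everything is straightforward dimension counting together with the irreducibility of $W$.
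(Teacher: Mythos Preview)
The paper does not prove this lemma; it explicitly states that the ``proof is left to the reader.'' So there is nothing to compare against, and I will simply assess your argument.

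Your dichotomy argument for $\mu\geq 2$ is correct and is presumably the easy proof the authors had in mind. One small point of hygiene: $W'$ and $W''$ are only well defined on the dense open $U_0\subseteq W$ where the assignment $w\mapsto L_w\in\mathbb{G}(N-\mu,N)$ is regular, and your dichotomy is only established on a further dense open $U\subseteq U_0$. What you actually use is that $W'\cap U_0$ and $W''\cap U_0$ are closed in $U_0$, that $U$ is dense in the irreducible $U_0$, and that $\bigcup_{w\in U_0}L_w$ is dense in $\p^N$. All of this is routine, and you gesture at it in your final paragraph, but the phrase ``closed subvarieties of $W$'' is not literally accurate.

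The only genuine wrinkle is your deferral of the case $\mu=1$ to Theorem~\ref{mu = 1}. That theorem assumes $\dim(Z^*)=1$, which coincides with $\mu=1$ only when $\codim(Z)=1$ (since then $W=Z^*$); for $\codim(Z)>1$ the identification fails. This gap is harmless, however, because the case $\mu=1$ is immediate on its own: the fibers $L_w$ are hyperplanes forming a one-parameter congruence of order one (through a general $p\in\p^N$ there passes exactly one $L_w$), so the image of $w\mapsto[L_w]\in(\p^N)^*$ is a curve meeting a general hyperplane $p^*$ in a single point, hence a line; equivalently the $L_w$ form a pencil through a fixed $\p^{N-2}$, and $X$ is Special Perazzo. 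You should replace the reference to Theorem~\ref{mu = 1} by this one-line observation.
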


The linear span of two general fibers $\p^{N-\mu}_{w_1}$ and $\p^{N-\mu}_{w_2}$ of the Perazzo map is strongly related
to the dimension of the secant variety $SZ^*\subseteq\p^N$ of $Z^*$ at least for $\codim(Z)=1$. For cubic hypersurfaces
with vanishing hessian we have an obvious inclusion
$SZ^*\subseteq X$ since $Z^*\subseteq \Sing X$. 
\medskip

The following result, surely well known (see for example \cite{C.Segre3}),  has many applications in this context. The proof
can be left to the reader.
\medskip
\begin{lema}\label{pencils} Let $\{Q_\lambda\}_{\lambda\in\p^m}$ be a linear system of  quadric hypersurfaces in $\p^N$. Then
\begin{equation}\label{spanSing}
<\overline{\bigcup_{\lambda\in\p^m\mbox{ general}}\Sing Q_\lambda}>\subset Q_t
\end{equation}
for $t\in\p^m$ general.
\end{lema}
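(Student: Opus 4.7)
My strategy is to prove the stronger statement that $<V>\subset\Bs:=\bigcap_{\mu\in\p^m}Q_\mu$, where $V=\overline{\bigcup_{\lambda\text{ gen}}\Sing Q_\lambda}$; by linearity of the condition ``$Q_t\supset <V>$'' in $t$, this is equivalent to the stated conclusion. Writing $B_\mu=\sum_j\mu_jB_j$ for the symmetric bilinear form underlying $Q_\mu$, a point $p$ belongs to $\Sing Q_\lambda$ precisely when $B_\lambda p=0$. Over the open $U\subset\p^m$ where $\rank B_\lambda$ is maximal, the assignment $\lambda\mapsto\ker B_\lambda$ is a vector subbundle of the trivial bundle on $U$, and locally every point of $V$ lies in the image of a section $p\colon U\to\p^N$ satisfying $B_\lambda p(\lambda)\equiv 0$.

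The first key step is a derivative identity. Since $Q_\lambda(p(\lambda))=(B_\lambda p(\lambda))(p(\lambda))\equiv 0$, differentiating in $\lambda_j$ yields $Q_j(p(\lambda))+2(B_\lambda p(\lambda))(\partial_{\lambda_j}p)=0$; the cross-term vanishes because $B_\lambda p(\lambda)=0$, leaving $Q_j(p(\lambda))\equiv 0$ for every basis quadric. Hence $p(\lambda)\in\Bs$ identically, proving $V\subset\Bs$. Since any linear combination $a p(\lambda)+b q(\lambda)$ of two sections is again a section of the kernel bundle (the fibers $\Sing Q_\lambda$ being linear), the same identity applied to this combination gives, after expansion and use of $Q_\mu(p)=Q_\mu(q)=0$, the relation $B_\mu(p(\lambda),q(\lambda))=0$ for all $\mu,\lambda$. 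Thus each fiber $\Sing Q_\lambda$ is totally $B_\mu$-isotropic for every $\mu$ in the system.

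Finally, to extend isotropy across distinct fibers, fix $p_i\in\Sing Q_{\lambda_i}$ with $\lambda_1\neq\lambda_2$. The linear form $\mu\mapsto B_\mu(p_1,p_2)$ on $\p^m$ vanishes at $\mu=\lambda_i$ via $(B_{\lambda_i}p_i)(p_{3-i})=0$; more generally it vanishes on the linear subspaces $L_{p_i}=\{\mu:B_\mu p_i=0\}\subset\p^m$. For a pencil ($m=1$) the two points $\lambda_1,\lambda_2$ already span $\p^1$ and kill the form identically. For general $m$, as the sections range over the kernel bundle that dominates $\p^m$, the union $L_{p_1}\cup L_{p_2}$ of the associated linear subspaces spans $\p^m$ for generic $p_1,p_2\in V$, forcing $B_\mu(p_1,p_2)=0$ for every $\mu$. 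Polarization in characteristic zero then gives $Q_\mu|_{<V>}\equiv 0$ for every $\mu$, so $<V>\subset Q_t$ for every $t\in\p^m$ and in particular for generic $t$. The delicate point is the last step for $m>1$: the two naive vanishing conditions $\mu=\lambda_1,\lambda_2$ alone do not suffice, and one must exploit the full family of sections of the kernel bundle to produce enough linear conditions on $\p^m$ to kill the form $\mu\mapsto B_\mu(p_1,p_2)$.
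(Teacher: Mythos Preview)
The paper leaves this proof to the reader, so there is no reference argument to compare against; I will simply assess the validity of your proposal.

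Your first two steps are correct and nicely done. The differentiation argument showing that any local section $p(\lambda)$ of the kernel bundle satisfies $Q_j(p(\lambda))\equiv 0$ for every $j$ is clean, and it immediately yields $V\subset\Bs$; the extension to same-fiber isotropy $B_\mu(p,q)=0$ for $p,q\in\Sing Q_\lambda$ is also fine. Moreover, your observation that the condition ``$\langle V\rangle\subset Q_t$'' is linear in $t$ --- so that holding for general $t$ is equivalent to holding for all $t$ --- is a useful reduction. For $m=1$ your third step is complete: the linear form $\mu\mapsto B_\mu(p_1,p_2)$ vanishes at the two distinct points $\lambda_1,\lambda_2\in\p^1$ and hence identically. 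Since the only application in the paper (Proposition~\ref{geramPN}, and the classical reference to C.~Segre on pencils of quadric cones) is to pencils, this already covers what is actually needed.

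For $m\geq 2$, however, your third step has a genuine gap. You assert that $\langle L_{p_1},L_{p_2}\rangle=\p^m$ for generic $p_1,p_2\in V$, but this is neither proved nor true in general. Consider the typical situation in which the generic $Q_\lambda$ has corank one and the map $\lambda\mapsto\Sing Q_\lambda$ is generically injective onto $V$: then for generic $p\in V$ the fiber $L_p=\{\mu:B_\mu p=0\}$ is a single point, so $\langle L_{p_1},L_{p_2}\rangle$ is only a line in $\p^m$, far from spanning when $m\geq 2$. Your final sentence concedes that ``the two naive vanishing conditions $\mu=\lambda_1,\lambda_2$ alone do not suffice'' and appeals to ``the full family of sections of the kernel bundle to produce enough linear conditions,'' but no mechanism is given for converting the family of sections into enough linear constraints on the \emph{fixed} form $\mu\mapsto B_\mu(p_1,p_2)$: as soon as you vary the section, you vary $p_1$ (or $p_2$) as well, so you are no longer constraining the same linear form. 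Thus the argument, as written, does not establish the cross-fiber isotropy $B_\mu(p_1,p_2)=0$ for all $\mu$ when $m>1$.
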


The next Proposition was inspired by \cite[footnote  pg. 348,349]{Pe} and it relies essentially on the previous geometrical
fact.

\begin{prop} \label{prop:lema1,2}
  Let $X=V(f) \subset \P^N$ be an irreducible cubic hypersurface with vanishing hessian, not a cone.
 Then the following conditions are equivalent:
\begin{enumerate}
  \item[i)] $SZ^* \subseteq \Sing X$;
  \medskip
  \item[ii)] $Z^* \subseteq \displaystyle \bigcap_{w \in W\mbox{{\rm general}}}\p^{N-\mu}_w$;
  \medskip
    \item[iii)] $<Z^*> \subseteq \displaystyle \bigcap_{w \in W\mbox{{\rm general}}}\p^{N-\mu}_w$;
  \medskip
  \item[iv)] $<Z^*> \subseteq \Sing X$.
\end{enumerate}
\end{prop}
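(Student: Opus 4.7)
My plan is to introduce the symmetric trilinear form $\tilde f$ associated with the cubic $f$, so that $f(x)=\tilde f(x,x,x)$, and to convert every condition into a statement about $\tilde f$. The basic dictionary is
$$q\in\Sing X\iff \tilde f(q,q,\cdot)\equiv 0,\qquad q\in\Sing Q_p\iff \tilde f(p,q,\cdot)\equiv 0,$$
the second being symmetric in $p$ and $q$. With this in hand I will run the cyclic chain $iv)\Rightarrow i)\Rightarrow ii)\Leftrightarrow iii)\Rightarrow iv)$, where $iv)\Rightarrow i)$ is immediate from $SZ^*\subseteq<Z^*>$, and $ii)\Leftrightarrow iii)$ is immediate from the fact that $\p^{N-\mu}_w$ is a linear space by Theorem \ref{Perazzolinear}.

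For $i)\Rightarrow ii)$ I would pick generic $r_1,r_2\in Z^*$; by Corollary \ref{thm:polar&perazzo} both lie in $\Sing X$, so the hypothesis $<r_1,r_2>\subseteq\Sing X$ expanded as $\tilde f(ar_1+br_2,ar_1+br_2,\cdot)\equiv 0$ reduces to the vanishing of the cross term $\tilde f(r_1,r_2,\cdot)\equiv 0$. By Zariski closedness in $r_1,r_2$ this gives $Z^*\subseteq\Sing Q_r$ for every $r\in Z^*$. Combining this with the formula $\p^{N-\mu}_w=\bigcap_{r\in \Sing Q_p}\Sing Q_r$ from Theorem \ref{Perazzolinear} and the inclusion $\Sing Q_p=(T_{\Phi_X(p)}Z)^*\subseteq Z^*$, every term of the intersection already contains $Z^*$, so $Z^*\subseteq\p^{N-\mu}_w$.

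For $iii)\Rightarrow iv)$ I would use the same formula in the opposite direction. From $<Z^*>\subseteq\p^{N-\mu}_w$ one reads off $<Z^*>\subseteq\Sing Q_r$ for every $r\in \Sing Q_p=(T_{\Phi_X(p)}Z)^*$. As $p$ varies over a general open of $\p^N$ the linear spaces $(T_{\Phi_X(p)}Z)^*$ sweep a Zariski dense subset of $Z^*$ by the very definition \eqref{Zdualdef}, so $<Z^*>\subseteq\Sing Q_r$ first on a dense subset of $r\in Z^*$ and then everywhere by closedness of the condition. Rewritten, $\tilde f(r,q,\cdot)\equiv 0$ for all $r\in Z^*$ and $q\in<Z^*>$. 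Writing an arbitrary $q\in<Z^*>$ as $q=\sum_i a_ir_i$ with $r_i\in Z^*$ and using trilinearity then yields
$$\tilde f(q,q,\cdot)=\sum_i a_i\tilde f(r_i,q,\cdot)\equiv 0,$$
so $q\in\Sing X$.

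The only technical point requiring care is the sweeping argument used in both non-trivial directions, but this is entirely taken care of by \eqref{Zdualdef}, which defines $Z^*$ precisely as the Zariski closure of the union of the $(T_zZ)^*$ over $z\in Z_{\reg}$. Beyond that, the proof is pure linear algebra inside the symmetric trilinear form $\tilde f$, combined with the explicit description of the Perazzo fibres given by Theorem \ref{Perazzolinear}; the symmetry $\tilde f(p,q,\cdot)=\tilde f(q,p,\cdot)$ is what makes all four conditions collapse into the single statement that $\tilde f$ vanishes identically when two of its arguments lie in $Z^*$.
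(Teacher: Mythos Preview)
Your proof is correct and runs through the same cyclic chain $iv)\Rightarrow i)\Rightarrow ii)\Leftrightarrow iii)\Rightarrow iv)$ as the paper, using Theorem~\ref{Perazzolinear} in the same way. The explicit introduction of the symmetric trilinear form $\tilde f$ is a clean notational device; the paper carries out the same computations implicitly via expansions such as $\frac{\partial f}{\partial x_i}(\lambda q+\mu r)=\lambda^2\frac{\partial f}{\partial x_i}(q)+\mu^2\frac{\partial f}{\partial x_i}(r)$ when $q\in\Sing Q_r$.

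The one genuine difference is in $iii)\Rightarrow iv)$. The paper argues by induction on secant varieties, proving $S^kZ^*\subseteq\Sing X\Rightarrow S^{k+1}Z^*\subseteq\Sing X$ (each step uses one point in $Z^*$ and one in $S^kZ^*$) until $S^kZ^*$ stabilises at $<Z^*>$. Your argument is more direct: once $\tilde f(r,q,\cdot)\equiv 0$ is established for all $r\in Z^*$ and $q\in<Z^*>$, linearity in the first slot extends this immediately to $\tilde f(q,q,\cdot)\equiv 0$ for every $q\in<Z^*>$. This avoids the induction entirely and makes transparent that all four conditions amount to the single statement that $\tilde f$ vanishes whenever two of its arguments lie in $<Z^*>$.
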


\begin{proof}
 Let us suppose that $SZ^* \subseteq \Sing X$. If $r_1, r_2 \in Z^*$ are general points, then $< r_1, r_2 > \subseteq
  \Sing X$ implies   $r_2 \in \operatorname{Sing} Q_{\epsilon}$  for every $\epsilon \in
  <r_1,r_2>$. Taking $r_1$ general, $r_1 \in (T_zZ)^*$ where $w=[(T_zZ)^*] \in
  W$ is general, then
  $r_2 \in \operatorname{Sing} Q_{r_1}$, which by the generality of $r_1\in(T_zZ)^*$
  yields
  $$r_2\in\bigcap_{r_1\in (T_zZ)^*}\Sing Q_{r_1}= \p^{N-\mu}_w.$$
By the generality of $r_2 \in Z^*$ and of $w \in W$ we deduce:
$$Z^* \subseteq \dy \bigcap_{w \in W}\p^{N-\mu}_w.$$
Of course ii) is equivalent to iii).

Let us suppose now 
$ <Z^*>  \subseteq \dy \bigcap_{w \in W{\rm general}}\p^{N-\mu}_w$ and  let $s \in <Z^*>$ be a general point. By hypothesis and by 
\eqref{linearfiber} we have that  $s \in
\operatorname{Sing} Q_r$ with  $r \in Z^*$  general. 
We claim that  $$S^k Z^* \subseteq \Sing X \Rightarrow S^{k+1} Z^* \subseteq \Sing X$$
from which iv) will follow by induction.

Since  $S^{k+1} Z^* = S(S^k Z^*,Z^*)$, we take   $q\in S^k Z^* \subset \Sing X$ and $r \in Z^*\subset \Sing X$ general points. Then $q\in <Z^*>$
implies $q\in \Sing Q_r$  so that 
$\frac{\partial f}{\partial x_i} (\lambda q + \mu r) 
= \frac{\partial f}{\partial x_i} (q) \lambda^2 + \frac{\partial f}{\partial x_i} (r) \mu^2\equiv 0$ since  $q,r \in \Sing X$. 
Therefore $\lambda q + \mu r \in \Sing X$, which by the generality of $q$ and $r$ yields $S^{k+1} Z^* \subset \Sing X$, as claimed.

 The last implication iv) $\Rightarrow$ i) is obvious.
\end{proof}
\medskip

\begin{ex}\label{Segreproj} \rm Consider the Segre variety  $Y = \operatorname{Seg}(1,2)=\p^1\times\p^2 \subset \P^5 = \P(\mathbb{M}_{2 \times 3}(\mathbb K))$. 
With the last identification we have
$$\operatorname{Seg}(1,2) = \{ [A]\ |\ \rk(A) = 1 \}$$ and $SY=\p^5$.

The projection of $Y \subset \P^5$ from a point $p\in\p^5\setminus Y$ is a cubic hypersurface 
$X = V(f) \subset \P^4$. It is easy to check that $(\Sing X)_{\red} = \P^2$ because the secant lines to
$Y$ passing through $p$ describe a two dimensional quadric surface $\Sigma_p\subset Y$. 

We claim that
$X$ has vanishing hessian. Indeed, by the Reciprocity Law of Polarity, the polar quadric $Q_p$ of
a general point $p\in\p^4$ is a cone being a 3-dimensional quadric containing the plane $(\Sing X)_{\red}$,
see also  Proposition \ref{prop:classe1} for generalizations of this argument. Furthermore  $X$ is not a cone since  its dual is clearly non degenerate being a general hyperplane 
section of $Y^* \simeq Y$. The example recalled in the Introduction is projectively equivalent to $X$.
Indeed, modulo projective equivalence, $V(x_0x_3^2 +  x_1x_3x_4 +  x_2x_4^2)\subset \p^4$ is  the only  cubic hypersurface in $\p^4$ with vanishing hessian and not a cone,
see Theorem \ref{P4}. 

Since $Z^*\subset (\Sing X)_{\red}=\p^2=\Pi$, $Z\subset\p^4$ is a cone over the dual curve of $Z^*$ as a plane curve and
whose vertex is the line $L=\Pi^*$. Thus $\codim(Z)=1$ and $\mu=1$. One easily deduces that 
$X\subset\p^4$ is
a Special Perazzo Cubic Hypersurface whose associated congruence is given by the family of $\p^3$'s passing through $(\Sing X)_{\red}$, see also  Proposition \ref{prop:lema1,2}.
\end{ex}
\medskip

\begin{prop}\label{geramPN}
  Let $X = V(f) \subset \P^N$ be an irreducible cubic hypersurface with vanishing hessian, not a cone, with $\codim(Z)=1$.
Let $w_1,w_2 \in W=Z^*$ be general points and let $\overline{\mathcal P_X^{-1}(w_i)}=\p^{N-\mu}_{w_i}$ be the corresponding fibers of the Perazzo map. 
If $<\p^{N-\mu}_{w_1},\p^{N-\mu}_{w_2}>=\p^N$, then $<Z^*>\subseteq \Sing X$.
\end{prop}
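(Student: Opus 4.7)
The plan is to convert the hypothesis into a statement about the symmetric Hessian matrices $A_i := \Hess_X(w_i)$ and then exploit the total symmetry of the third derivative tensor of a cubic. First I would observe that, since $\codim(Z) = 1$, Theorem \ref{Perazzolinear} gives $\p^{N-\mu}_{w_i} = \Sing Q_{w_i} = \mathbb P(\ker A_i)$, so from $\dim \p^{N-\mu}_{w_i} = N-\mu$ each $A_i$ is a symmetric $(N+1)\times(N+1)$ matrix of rank $\mu$. The hypothesis $<\p^{N-\mu}_{w_1},\p^{N-\mu}_{w_2}>=\p^N$ then translates into the vector space identity $\ker A_1 + \ker A_2 = \mathbb K^{N+1}$, and taking orthogonal complements, together with the fact that for a symmetric matrix $(\ker A_i)^{\perp} = \Im A_i$, yields
$$\Im A_1 \cap \Im A_2 = 0.$$

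Next I would invoke the key feature special to cubics: since the third derivative tensor $\partial^3 f/\partial x_i \partial x_j \partial x_k$ is totally symmetric, one has the identity $A_1 w_2 = A_2 w_1$. The left-hand side lies in $\Im A_1$ and the right-hand side in $\Im A_2$, so the common vector belongs to $\Im A_1 \cap \Im A_2 = 0$. Hence $A_1 w_2 = 0$, i.e.\ $w_2 \in \ker A_1$, which means $w_2 \in \p^{N-\mu}_{w_1}$.

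Finally, since $w_1, w_2$ were taken general in the irreducible variety $W = Z^*$, for each general $w_1$ the closed linear subspace $\p^{N-\mu}_{w_1}$ contains a Zariski-dense subset of $Z^*$, hence contains $Z^*$ itself. This is exactly condition (ii) of Proposition \ref{prop:lema1,2}, which is equivalent to condition (iv): $<Z^*> \subseteq \Sing X$, and the proof is complete. The argument is a short piece of linear algebra once the translation in the first step is in hand; the only real subtlety, and what makes the argument specific to cubics, is recognizing that the total symmetry of the third derivatives places the common value $A_1 w_2 = A_2 w_1$ inside the intersection of two column spaces that the spanning hypothesis forces to be transverse.
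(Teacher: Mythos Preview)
Your proof is correct and takes a genuinely different route from the paper's.

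The paper argues as follows: the spanning hypothesis allows one to write a general $p\in\p^N$ on a line $<p_1,p_2>$ with $p_i$ general in $\p^{N-\mu}_{w_i}$; since $\Sing Q_{p_i}=w_i$, Lemma~\ref{pencils} (the fact that in a linear system of singular quadrics the span of the vertices lies in the general member) applied to the polar quadrics $\{Q_q\}$ yields $<Z^*>\subseteq Q_p$ for general $p$, and one concludes via the reciprocity identity $(\Sing X)_{\red}=\bigcap_{p\text{ general}}Q_p$. Your argument instead never leaves the two points $w_1,w_2$: you translate the spanning hypothesis into $\Im A_1\cap\Im A_2=0$ via the symmetry $(\ker A_i)^\perp=\Im A_i$, exploit the trilinear symmetry of a cubic to get $A_1w_2=A_2w_1$, and read off $w_2\in\Sing Q_{w_1}=\p^{N-\mu}_{w_1}$; you then reach the conclusion through condition~(ii) of Proposition~\ref{prop:lema1,2} rather than~(iv) directly.

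Both arguments ultimately rest on the symmetry of the third-derivative tensor (the paper uses it implicitly through the reciprocity ``$r\in\Sing Q_s \Leftrightarrow s\in\Sing Q_r$'' underlying Theorem~\ref{Perazzolinear} and Lemma~\ref{pencils}), but yours isolates it as a single clean identity $A_1w_2=A_2w_1$ and avoids any appeal to Lemma~\ref{pencils} or to the description of $\Sing X$ as an intersection of polar quadrics. The paper's route is more geometric and yields the stronger intermediate fact $<Z^*>\subset Q_p$ for each general $p$; yours is a short self-contained piece of linear algebra that makes the role of the hypothesis (transversality of images) and of the cubic assumption (the identity $A_1w_2=A_2w_1$) completely explicit.
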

\begin{proof} Let $p\in \p^N$ be a general point. Then there exist general $p_i\in
\p^{N-\mu}_{w_i}$ such that $p\in<p_1,p_2>.$ Since $w_i=\mathcal P_X(p_i)=\Sing Q_{p_i}$,
 Lemma \ref{pencils} implies $<Z^*>\subseteq Q_p$ for $p\in\p^N$ general. To conclude
it suffices to recall that 
$$(\Sing X)_{\red}=\bigcap_{p\in\p^N\mbox{{\rm general}}} Q_p,$$
 by Proposition \ref{reciprocity}.
\end{proof}
\medskip

When $\codim(Z)=1$ there are interesting relations between the base locus of the Perazzo map and $W=Z^*$. Indeed
if $Z=V(g)$, letting notation be as in Definition \ref{defpsig}, we have 
$$\mathcal P_X=\psi_g=(h_0:\ldots:h_N):\p^N\dasharrow\p^N$$
and $Z^*=W=\overline{\psi_g(\p^n)}$.
\medskip

\begin{prop}\label{inclZdualL} Let $X=V(f)\subset\p^N$ be an irreducible cubic hypersurface with vanishing hessian,
not a cone, with $\codim(Z)=1$ and let $\Bs(\mathcal P_X)=\Bs(\psi_g)=V(h_0,\ldots,h_N)$ be the base locus scheme of $\mathcal P_X$. Then:
\begin{enumerate}
\item[a)] $Z^*\subseteq \Bs(\mathcal P_X)$;
\medskip
\item[b)] if $X$ is a Special Perazzo Cubic Hypersurface, then
\begin{equation}\label{inclZ*L}
Z^* \subseteq \displaystyle \bigcap_{w \in W\mbox{{\rm general}}}\p^{N-\mu}_w=\p^{N-\mu-1}\subseteq\Bs(\mathcal P_X).
\end{equation} 
\end{enumerate}
\end{prop}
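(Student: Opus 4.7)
The plan is to treat parts (a) and (b) separately, using the identification $\mathcal{P}_X=\psi_g$ when $\codim(Z)=1$ and the explicit description $\Bs(\mathcal{P}_X)=V(h_0,\ldots,h_N)$.

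For part (a), I first observe that since $X$ is not a cone, Proposition \ref{prop:equivalence_definitions_of_cones}(iii) forces $\deg g\geq 2$: any linear $g=\sum c_iy_i$ with $g(f_0,\ldots,f_N)=0$ would produce a linear dependence of the partial derivatives. Consequently each $\partial g/\partial y_i$ has positive degree and vanishes at the origin. Since $Z^*\subseteq\Sing X$ by Corollary \ref{thm:polar&perazzo}(iii), for any $r\in Z^*$ we have $f_j(r)=0$ for all $j$, whence $g_i(r)=(\partial g/\partial y_i)(0,\ldots,0)=0$. So every $g_i$ vanishes on $Z^*$. The factorization $g_i=\rho h_i$ gives the set-theoretic decomposition $V(g_0,\ldots,g_N)=V(\rho)\cup\Bs(\mathcal{P}_X)$, and irreducibility of $Z^*$ (it is the closure of the Gauss image of the irreducible hypersurface $Z$) forces $Z^*\subseteq V(\rho)$ or $Z^*\subseteq\Bs(\mathcal{P}_X)$.

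The hard step, which I view as the main obstacle, is to rule out $Z^*\subseteq V(\rho)$. To approach this, note that since the image of $\psi_g$ is $Z^*\subseteq\Sing X=V(f_0,\ldots,f_N)$, the polynomials $f_j(h_0(p),\ldots,h_N(p))$ vanish identically in $p$, so $g_i(h(p))=(\partial g/\partial y_i)(0,\ldots,0)\equiv 0$, and therefore $\rho(h(p))\cdot h_i(h(p))\equiv 0$ in the integral domain $\mathbb{K}[p_0,\ldots,p_N]$. This forces either $\rho(h(p))\equiv 0$ (i.e.\ $Z^*\subseteq V(\rho)$) or $h_i(h(p))\equiv 0$ for every $i$ (i.e.\ $Z^*\subseteq\Bs(\mathcal{P}_X)$). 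The first alternative is to be excluded using $\gcd(h_0,\ldots,h_N)=1$ (by the very definition of $\rho$ as the gcd of the $g_i$'s) together with the dimension estimate $\dim Z^*\leq(N-1)/2$ from Theorem \ref{cones}, which prevent $\rho$ from lying in the prime ideal $I(Z^*)$ in the cubic setting; producing this exclusion cleanly is the crux of the argument.

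For part (b), assume $X$ is Special Perazzo, so by definition the general fibers $\p^{N-\mu}_w$ form a congruence through a fixed $\p^{N-\mu-1}$. Two generic $\p^{N-\mu}$'s through $\p^{N-\mu-1}$ meet precisely in the fixed space, hence $\bigcap_{w\in W\mbox{{\rm general}}}\p^{N-\mu}_w=\p^{N-\mu-1}$. The inclusion $Z^*\subseteq\p^{N-\mu-1}$ then follows from Proposition \ref{prop:lema1,2}: under the Special Perazzo assumption, condition (iii) of that proposition reads $<Z^*>\subseteq\bigcap_w\p^{N-\mu}_w=\p^{N-\mu-1}$, so in particular $Z^*\subseteq\p^{N-\mu-1}$. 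The remaining inclusion $\p^{N-\mu-1}\subseteq\Bs(\mathcal{P}_X)$ is direct: any $q\in\p^{N-\mu-1}$ lies in every general fiber $\p^{N-\mu}_w$, so if $\psi_g$ were defined at $q$ then $\psi_g(q)$ would have to coincide with every general $w\in W$ simultaneously; since $\dim W=\mu\geq 1$ by Remark \ref{rm:mu0} (as $X$ is not a cone), this is impossible, forcing $q\in\Bs(\psi_g)$. Combining these two inclusions yields \eqref{inclZ*L}.
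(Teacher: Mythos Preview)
Your attempt at part (a) has a genuine gap, and the strategy you outline cannot be completed. You reach the point $\rho(h(p))\cdot h_i(h(p))\equiv 0$ and then try to \emph{exclude} $Z^*\subseteq V(\rho)$. But this exclusion is false in general: whenever $\deg\rho\geq 1$, the polynomial $\rho$ \emph{does} vanish identically on $Z^*$. Indeed, the Gordan--Noether identity \eqref{2.5} gives $f_j(\mathbf x+\lambda\psi_g(\mathbf x))=f_j(\mathbf x)$, hence $g_i(\mathbf x+\lambda\psi_g(\mathbf x))=g_i(\mathbf x)$. Writing $g_i=\rho h_i$ and using $\gcd(h_0,\ldots,h_N)=1$, one sees that $\rho(\mathbf x+\lambda\psi_g(\mathbf x))$ divides $\rho(\mathbf x)$ in $\mathbb K[\mathbf x,\lambda]$, so it is actually independent of $\lambda$ and equals $\rho(\mathbf x)$; the top coefficient in $\lambda$ then forces $\rho(\psi_g(\mathbf x))\equiv 0$. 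So your dichotomy does not isolate the desired conclusion. The paper's route is exactly this $\gcd$ argument: once $\rho(\mathbf x+\lambda\psi_g(\mathbf x))=\rho(\mathbf x)$, one cancels and gets $h_i(\mathbf x+\lambda\psi_g(\mathbf x))=h_i(\mathbf x)$; comparing top coefficients in $\lambda$ (and noting $\deg h_i\geq 1$ since $\mu\geq 1$) gives $h_i(\psi_g(p))=0$ directly, i.e.\ $Z^*\subseteq\Bs(\mathcal P_X)$.

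In part (b), your proof that $\p^{N-\mu-1}\subseteq\Bs(\mathcal P_X)$ is fine, but your derivation of $Z^*\subseteq\p^{N-\mu-1}$ misuses Proposition~\ref{prop:lema1,2}. That proposition lists four \emph{equivalent} conditions; it does not assert that any of them holds for a Special Perazzo hypersurface. The definition of Special Perazzo (general fibers pass through a fixed $\p^{N-\mu-1}$) is \emph{not} one of the conditions (i)--(iv), and the implication ``Special Perazzo $\Rightarrow$ $Z^*\subseteq\bigcap_w\p^{N-\mu}_w$'' is precisely what part (b) is meant to establish, so invoking Proposition~\ref{prop:lema1,2} here is circular. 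The paper instead uses the identity \eqref{GNPX} and part (a): for general $p$ the line $\langle p,\psi_g(p)\rangle$ lies in the fiber $\p^{N-\mu}_w$ (with $w=\psi_g(p)$) and meets $\Bs(\mathcal P_X)$ only at $\psi_g(p)$; since the hyperplane $L=\p^{N-\mu-1}\subset\p^{N-\mu}_w$ is contained in $\Bs(\mathcal P_X)$, the intersection point of the line with $L$ must be $\psi_g(p)$, whence $\psi_g(p)\in L$ and $Z^*\subseteq L$.
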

\begin{proof} Let notation be as in Subsection \ref{GNIDS} with $Z=V(g)\subset\p^{N*}$. Since $g_i=\rho\cdot h_i$ for every $i=0,\ldots, N$
and since $g_i(\mathbf x+\lambda\psi_g(\mathbf x))=g_i(\mathbf x)$ for every $i=0,\ldots, N$ and for every $\lambda\in\mathbb K$, we deduce  $h_i(\mathbf x+\lambda\psi_g(\mathbf x))=h_i(\mathbf x)$ for every $i=0,\ldots, N$ and for every $\lambda\in\mathbb K$. Thus
\begin{equation}\label{GNPX}
\psi_g(\mathbf x+\lambda\psi_g(\mathbf x))=\psi_g(\mathbf x)
\end{equation}
for every $\lambda\in\mathbb K$. In particular  we have $h_i(\psi_g(p))=0$ for every $i=0,\ldots, N$, proving a).

To prove b) let us remark that $\p^{N-\mu}_w\cap \Bs(\mathcal P_X)$ is a hypersurface of degree $e=\deg(h_i)\geq 1$
and that, by hypothesis, the intersection of two general $\p^{N-\mu}_{w_i}$'s is a $\p^{N-\mu-1}=L$ contained in $\Bs(\mathcal P_X)$ and in a general $\p^{N-\mu}_w$. Thus $L$ is an irreducible component of $\p^{N-\mu}_w\cap \Bs(\mathcal P_X)$. Let $p\in \p^{N-\mu}_w$ be a general point. By \eqref{GNPX} the line $<p,\psi_g(p)>$, which is contained in $\p^{N-\mu}$, cuts $\Bs(\mathcal P_X)$ only in the point $\psi_g(p)$, yielding  $\psi_g(p)\in L$ because
$<p,\psi_g(p)>\cap L\neq\emptyset$. The generality of $w$ implies $\psi_g(p)\in L$ for $p\in\p^N$ general so that $Z^*\subseteq L$.
\end{proof}

\section{Classes of cubic hypersurfaces with vanishing hessian according to Perazzo and canonical forms
of Special Perazzo Cubic Hypersurfaces}\label{examplesmall}

The examples of hypersurfaces with vanishing hessian not cones considered in \cite{GN} and in Perazzo \cite{Pe} 
are singular along linear spaces, see also \cite{CRS}.
We now prove that in some cases the existence of a linear space of sufficiently high dimension in the singular locus assures that a cubic hypersurface
has vanishing hessian.
\medskip

\begin{prop}\label{prop:classe1}
 Let $X\subset \P^N$, $N \geq 4$, be an irreducible cubic hypersurface. 
Suppose that there exists a linear space $S=V(x_{\tau+1},\ldots,x_N)=\P^{\tau}$ contained in 
the singular locus of $X$.  Then, up to projective transformations:
\begin{equation}\label{cancub}
f = \dy \sum_{i=0}^{\tau}x_iC^i(x_{\tau + 1}, ..., x_N) + D(x_{\tau + 1}, ..., x_N),
\end{equation}
with $C^i(x_{\tau + 1}, ..., x_N)$ a quadratic form for every $i=0,\ldots,\tau$ and with
$D(x_{\tau + 1}, ..., x_N)$ a cubic form.

\begin{enumerate}
 \item[i)] If $ \tau > \frac{N-1}{2}$, then $\hess_X  = 0$; 
\medskip
\item[ii)] If $\tau >\frac{(N-\tau)(N-\tau+1)}{2}-1$, then $X$ is a cone.
\end{enumerate}
In particular, for $N > 4$  every non normal cubic hypersurface is a cone.
\end{prop}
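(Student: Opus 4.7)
The plan is to extract the canonical form directly from the condition $S\subset\Sing X$, and then to deduce (i) and (ii) by inspecting, respectively, the block structure of the Hessian matrix and the space of partial derivatives of $f$. Split the variables as $y=(x_0,\ldots,x_\tau)$ and $z=(x_{\tau+1},\ldots,x_N)$ and write
\[
f=P_3(y)+P_2(y,z)+P_1(y,z)+P_0(z),
\]
where $P_k$ is bihomogeneous of degree $k$ in $y$ and $3-k$ in $z$. The inclusion $S\subset X$ forces $P_3=0$, while the vanishing of $\partial f/\partial x_k$ for $k>\tau$ along $z=0$ forces every coefficient of $P_2$ to be zero; the derivatives with respect to $y_i$ at $z=0$ are then automatically zero. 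This yields the stated canonical form $f=\sum_{i=0}^\tau x_iC^i(z)+D(z)$ with $C^i$ quadratic and $D$ cubic in $z$.

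For (i), the canonical form makes $f$ at most linear in each $x_i$ with $i\le\tau$, so the upper-left $(\tau+1)\times(\tau+1)$ block of $\Hess_X$ vanishes identically and
\[
\Hess_X=\begin{pmatrix} 0 & A\\ A^T & B\end{pmatrix},
\]
where $A=(\partial C^i/\partial x_k)$ has size $(\tau+1)\times(N-\tau)$. The first $\tau+1$ columns of $\Hess_X$ have rank bounded by $\rk(A^T)\le N-\tau$, so whenever $\tau>(N-1)/2$ those columns are linearly dependent and $\hess_X=0$. For (ii), note that $\partial f/\partial x_i=C^i$ for $i\le\tau$ lies in the space of quadratic forms in $N-\tau$ variables, whose dimension is $(N-\tau)(N-\tau+1)/2$. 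If $\tau+1$ exceeds this number the $C^i$ are linearly dependent, producing a nontrivial relation $\sum_i\lambda_i\partial f/\partial x_i=0$, so $X$ is a cone by Proposition~\ref{prop:equivalence_definitions_of_cones}(iii).

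For the final assertion, a non-normal irreducible cubic has $\dim\Sing X=N-2$. Let $Y$ be a top-dimensional component of $\Sing X$; the aim is to show $Y$ is a linear $\P^{N-2}$. Slicing with a general $2$-plane $\Pi\subset\P^N$ and invoking Bertini's irreducibility theorem, applicable since $\dim X=N-1\ge 4$ when $N>4$, produces an irreducible plane cubic $X\cap\Pi$ whose singular locus coincides with $\Sing X\cap\Pi$; lower-dimensional components of $\Sing X$ miss $\Pi$ by dimension count, so this set contains the $\deg(Y)$ points of $Y\cap\Pi$. An irreducible plane cubic admits at most one singular point, hence $\deg Y=1$ and $Y$ is a linear $\P^{N-2}$. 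Setting $\tau=N-2$, the hypothesis of (ii) becomes $N-2>\tfrac{2\cdot 3}{2}-1=2$, that is $N>4$, so (ii) yields that $X$ is a cone. The only delicate step in the entire plan is the successive Bertini argument for the non-normal case; the rest reduces to elementary linear algebra on Hessian blocks and to dimension counts in spaces of quadratic forms.
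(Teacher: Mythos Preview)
Your proof is correct. The derivation of the canonical form and part (ii) match the paper's argument (the paper leaves \eqref{cancub} to the reader and proves (ii) by the same dimension count on quadratic forms in $N-\tau$ variables).

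The genuine differences are in (i) and in the non-normal assertion. For (i) you argue directly on the block shape of $\Hess_X$: the vanishing $(\tau+1)\times(\tau+1)$ upper-left block forces the first $\tau+1$ columns to have rank at most $N-\tau$, hence to be dependent once $\tau+1>N-\tau$. The paper instead invokes the Reciprocity Law of Polarity to conclude that $S\subset Q_p$ for every $p$, and then observes that a quadric in $\P^N$ containing a linear $\P^\tau$ with $\tau>(N-1)/2$ is necessarily singular, so $\Hess_X(p)$ is degenerate for general $p$. The two arguments are equivalent in content, but yours is purely linear-algebraic while the paper's connects to the polarity framework used throughout the article. For the non-normal case the paper simply asserts that $\dim\Sing X=N-2$ forces $(\Sing X)_{\red}=\P^{N-2}$; your Bertini slicing to an irreducible plane cubic with at most one singular point supplies an actual proof of this claim and is a welcome addition. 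One minor remark: the Bertini irreducibility step only needs $\dim X=N-1\geq 2$, not $\geq 4$, so your parenthetical ``applicable since $\dim X=N-1\ge 4$'' is stronger than necessary, though of course harmless under the hypothesis $N>4$.
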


\begin{proof} The proof of \eqref{cancub} is left to the reader.
To prove i) it is enough to remark that $S\subseteq\Sing X\subset Q_p$ with $p\in\p^N$ general by the Reciprocity Law of Polarity.
If $\tau>\frac{N-1}{2}$, then $Q_p$ is a cone and the generality of $p$ implies $\hess_X= 0$.

To prove ii)  we observe that $f_i=C_i$ for every $i=0,\ldots, \tau$ and that the number of independent quadrics in $N- \tau$ variables is $\left (\begin{array}{c} N - \tau + 1 \\ 2 \end{array} \right)$.

The cubic hypersurface is non normal if and only if  to $\dim (\Sing X) = N-2 $, which on the other hand implies
$(\Sing X)_{\red}= \P^{N-2}$. Therefore we can assume $N>4$ and $\tau=N-2$ so that   $X$ is a cone by part ii).
\end{proof}

\begin{rmk}\label{singp2}
  { \rm For cubic hypersurfaces in $\P^5$ the existence of a singular plane does not imply the vanishing of the hessian. Indeed, if 
  $$X=V(x_0x_3^2 + x_1x_4^2 + x_2x_5^2)\subset\p^5$$
then $(\Sing X)_{\red}=V(x_3,x_4,x_5)=\p^2$ but $\hess_X\neq 0$.
 Thus the  bound in Proposition \ref{prop:classe1} is sharp for any dimension
by easy manipulations on the previous example.

Via direct computations one can verify that for a general $p\in \p^5$ the non-singular quadric $Q_p$ contains the linear space
$S=V(x_3,x_4,x_5)$ and that $\cap_{s \in S} T_sQ_p =S$.}
\end{rmk}

We shall now introduce some terminology in order to obtain a projective characterization
of  Special Perazzo Cubic Hypersurfaces. 
 
This approach follows strictly  the original work of Perazzo, who used the next results to provide
simplified canonical forms for Special Perazzo Cubic Hypersurfaces, see \cite[\S 14-18 p. 344-350]{Pe}.

The first definition is a generalization of the classical notion of {\it generatrix} of a cone with vertex a 
point, that is a line contained in the  cone and passing through the vertex.

\begin{defin} \rm
 Let $Q \subset \P^N$ be a a quadric of rank $\operatorname{rk}(Q) = \beta\geq 3$ and let
$\operatorname{Vert}(Q) = \operatorname{Sing} Q = \P^{N- \beta}$ be its vertex. A linear space $M = \P^{\tau}$, $\tau\geq 1$, is a {\it generator of 
$Q$} if $$\operatorname{Vert}(Q) \subsetneq M \subset Q.$$
\end{defin}

The proof of the next result is left to the reader.
\medskip

\begin{lema}\label{tangencia}
  Let $Q \subset \P^N$ be a quadric of rank $\operatorname{rk}(Q) =\beta$ and let $M =\P^{\tau}$ 
be a generator of $Q$. Then
  $$\dy \bigcap_{m \in M\setminus\Vert(Q)} T_mQ = \p(\Phi_Q(M))^*=\P^{{2N-\beta-\tau}}.$$
 \end{lema}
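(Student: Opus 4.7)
The plan is to exploit the fact that the polar map of a quadric is linear. If $A$ is the symmetric matrix representing $Q$, of rank $\beta$, then $\Phi_Q \colon \P^N \dashrightarrow (\P^N)^*$ is the projective linear map $[\mathbf v] \mapsto [A\mathbf v]$, whose indeterminacy locus is exactly $\Vert(Q) = \P(\ker A)$. For any $m \in Q \setminus \Vert(Q)$, the tangent hyperplane $T_m Q$ is precisely the hyperplane dual to the point $\Phi_Q(m) \in (\P^N)^*$, so intersecting the family of tangent hyperplanes is dual to spanning the corresponding family of polar images.

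First I would restrict $\Phi_Q$ to $M$. Since $\Vert(Q) \subsetneq M$, the indeterminacy locus of $\Phi_Q|_M$ is $M \cap \Vert(Q) = \Vert(Q) = \P^{N-\beta}$, and $\Phi_Q|_M$ is simply the linear projection of $M = \P^\tau$ away from $\Vert(Q)$. In particular its image $\Phi_Q(M) \subset (\P^N)^*$ is a linear subspace of dimension
\[ \dim \Phi_Q(M) \;=\; \tau - (N-\beta) - 1 \;=\; \tau + \beta - N - 1. \]

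Next, using the elementary duality that the intersection of a family of hyperplanes equals the dual of the linear span of the points representing them, I would write
\[ \bigcap_{m \in M \setminus \Vert(Q)} T_m Q \;=\; \bigcap_{m \in M \setminus \Vert(Q)} \bigl(\Phi_Q(m)\bigr)^* \;=\; \bigl(\Phi_Q(M)\bigr)^*. \]
Applying biduality (i.e.\ $\dim L + \dim L^* = N-1$) then yields
\[ \dim \bigl(\Phi_Q(M)\bigr)^* \;=\; N - 1 - (\tau + \beta - N - 1) \;=\; 2N - \tau - \beta, \]
which is precisely the claimed $\P^{2N - \beta - \tau}$.

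The statement is essentially a routine consequence of the linearity of the polar map of a quadric, so no substantive obstacle is expected. The only points demanding care are the dimension count for $\Phi_Q(M)$, for which the strict inclusion $\Vert(Q) \subsetneq M$ (built into the definition of generator) and the hypothesis $\beta \geq 3$ are used to ensure $\Phi_Q|_M$ is non-constant, and the observation that $T_m Q = (\Phi_Q(m))^*$ only makes sense for $m \notin \Vert(Q)$, which is exactly what we are intersecting over.
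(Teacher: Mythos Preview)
Your argument is correct and is exactly the routine verification the paper has in mind; note that the paper explicitly leaves this proof to the reader, so there is no alternative approach to compare against. One small quibble: in your closing remarks you say that $\beta \geq 3$ is needed to ensure $\Phi_Q|_M$ is non-constant, but in fact the strict inclusion $\Vert(Q) \subsetneq M$ alone guarantees $\Phi_Q(M)$ is non-empty (dimension $\tau + \beta - N - 1 \geq 0$), and the lemma holds perfectly well even when this image is a single point---the condition $\beta \geq 3$ is part of the ambient definition of generator but plays no role in the dimension count here.
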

\medskip

\begin{defin}\rm
Let $Q \subset \P^N$ be a quadric, let
$M$ be a generator of $Q$ and let $L$ be a linear space containing $M$.
The quadric is {\it tangent to the linear space $L$ along the generator $M$} if 
$L \subseteq T_mQ$ for all $m \in M$.
\end{defin}

Obviously a necessary condition for $L$ being tangent to $Q$ along $M=\p^\tau$ is 
$$L\subseteq\bigcap_{m \in M\setminus\Vert(Q)} T_mQ=\p^{2N-\rk(Q)-\tau},$$
yielding  $\dim(M)=\tau\leq\dim(L)\leq 2N-\rk(Q)-\tau$.

In the example analyzed in Remark \ref{singp2} the general polar quadric $Q_p\subset\p^5$, which is nonsingular,  is tangent to $M=V(x_3,x_4,x_5)=(\Sing X)_{\red}$ along $M$. 
\medskip

The next result is the core of the work of Perazzo on canonical forms. We shall always assume $\codim(Z)=1$
or equivalently $\rk(Q_p)=N$ for $p\in\p^N$ general which corresponds to the case $p=0$  considered
by Perazzo. We shall obtain  a simplified canonical 
form refining Proposition \ref{prop:classe1}, corresponding to the case in which the polar quadric of a general point of $\P^N$ 
is tangent to a fixed linear space $\P^{N - \tau}$ along a common generator of dimension $\tau$, see \cite[\S 1.17 p.349]{Pe}.

We shall consider $\operatorname{Sing} Q_p \subset M \subset Q_p$ with $M=\p^\tau$ a common  generator having  equations: $x_{\tau + 1} = ... = x_N = 0$. 
Furthermore we shall suppose 
$$L=\dy \bigcap_{m \in M\setminus\Vert(Q_p)} T_mQ_p = \P^{N-\tau},$$
that is $L$ will always be  the maximal linear subspace containing $M$ along which the quadric $Q_p$ can be tangent along the generator $M$.
Moreover we shall assume that 
$L$ has equations: $x_{N-\tau + 1} = ... = x_N = 0$. Under these hypothesis $\dim(L)=N-\tau\geq\tau=\dim(M)$ with equality
if and only if $L=M$.

\begin{thm}\label{forma_canonica_sist_linear}
 Let $X= V(f) \subset \P^N$ be a cubic hypersurface
 having vanishing hessian, not a cone, and with $\codim(Z)=1$.
 Suppose  that 
the polar quadric of a general   point of $\P^N$ is  tangent to a fixed linear space $L=\P^{N - \tau}$ 
along a common generator $M$ with $M=V(x_{\tau + 1},\ldots, x_N)$ and  $L=V(x_{N-\tau+1},\ldots, x_N)$. Then: 
  \begin{equation}\label{eq:simplified}
  f = \dy \sum_{i=0}^{\tau} x_i C^i(x_{N-\tau+ 1}, ..., x_N) + D(x_{\tau + 1}, ..., x_N),
  \end{equation}

where the $C_i$'s  are linearly independent quadratic forms depending only on the variables $x_{N-\tau + 1}, ..., x_N$ and $D$ is a cubic in the variables 
 $x_{\tau + 1}, ..., x_N$.
 
 On the contrary, for  every cubic hypersurface $X=V(f)\subset\p^N$ with $f$ as in \eqref{eq:simplified} the polar quadric of a general  point of $\P^N$ is tangent to $V(x_{N-\tau+1},\ldots, x_N)$ along the common generator $V(x_{\tau + 1},\ldots, x_N)$. Moreover  such a cubic hypersurface has vanishing hessian and $\codim(Z)=1$.
\end{thm}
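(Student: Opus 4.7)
The plan is to prove the two implications separately by translating the tangency hypothesis on polar quadrics into the vanishing of specific blocks of $\Hess_X$ and combining this with the canonical form supplied by Proposition \ref{prop:classe1}.

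\textbf{Direct implication.} Since $M$ is a common generator of every polar quadric, $M\subseteq Q_p$ for all $p$, so the Reciprocity Law of Polarity (Proposition \ref{reciprocity}) yields $M\subseteq\Sing X$. Applying Proposition \ref{prop:classe1} to $M=V(x_{\tau+1},\ldots,x_N)$ already puts $f$ in the preliminary form
\[ f=\sum_{i=0}^{\tau}x_iC^i(x_{\tau+1},\ldots,x_N)+D(x_{\tau+1},\ldots,x_N), \]
with $C^i$ a priori a quadratic form in $x_{\tau+1},\ldots,x_N$. The tangency condition $L\subseteq T_mQ_p$ for every $m\in M$, read off the symmetric Gram matrix $\Hess_X(p)$ of $Q_p$, amounts to $\partial^2 f/\partial x_i\partial x_j(p)\equiv 0$ for $0\le i\le\tau$ and $0\le j\le N-\tau$. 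Since $p$ is generic, the polynomials themselves vanish in this range, and plugging in the preliminary form forces $\partial C^i/\partial x_j\equiv 0$ for $\tau+1\le j\le N-\tau$, i.e., each $C^i$ depends only on $x_{N-\tau+1},\ldots,x_N$. This gives \eqref{eq:simplified}. Linear independence of $C^0,\ldots,C^\tau$ follows because $f_i=C^i$ for $i\le\tau$: any relation $\sum a_iC^i=0$ would yield a nontrivial linear dependence among the partial derivatives of $f$, contradicting the non-cone hypothesis via Proposition \ref{prop:equivalence_definitions_of_cones}.

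\textbf{Converse implication: tangency and $\hess_X=0$.} Given $f$ of the form \eqref{eq:simplified}, a direct differentiation gives $\partial^2 f/\partial x_i\partial x_j\equiv 0$ for $0\le i\le\tau$ and $0\le j\le N-\tau$ (no monomial of $f$ has two factors in $\{x_0,\ldots,x_\tau\}$, and $C^i$ has no dependence on $x_{\tau+1},\ldots,x_{N-\tau}$). Reversing the translation from the direct part recovers the tangency statement. Moreover $\Hess_X$ contains a $(\tau+1)\times(N-\tau+1)$ zero block, and since $(\tau+1)+(N-\tau+1)=N+2$ exceeds the matrix size $N+1$, the standard bound $\rk\le 2(N+1)-(\tau+1)-(N-\tau+1)=N$ for the rank of a square matrix with such a zero block shows $\hess_X\equiv 0$.

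\textbf{Converse implication: $\codim(Z)=1$.} Partition the indices as $A=\{0,\ldots,\tau\}$, $B=\{\tau+1,\ldots,N-\tau\}$, $C=\{N-\tau+1,\ldots,N\}$. The Hessian takes the block form
\[ \Hess_X=\begin{pmatrix}0&0&J\\0&H_{BB}&H_{BC}\\J^T&H_{BC}^T&H_{CC}\end{pmatrix}, \]
with $J=[\partial C^i/\partial x_j]_{i\in A,\,j\in C}$ of size $(\tau+1)\times\tau$. Back-substitution through the three block equations $Jv_C=0$, $H_{BB}v_B+H_{BC}v_C=0$, $J^Tv_A+H_{BC}^Tv_B+H_{CC}v_C=0$ reduces the statement $\dim\ker\Hess_X(p)=1$ to the two rank conditions (i) $\rk J(p)=\tau$ and (ii) $H_{BB}(p)$ invertible, at a general $p$. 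Condition (i) is a consequence of the linear independence of $C^0,\ldots,C^\tau$ as quadratic forms in $\tau$ variables, since the image of the associated rational map $\mathbb P^{\tau-1}\dashrightarrow\mathbb P^\tau$ is a hypersurface and its differential attains maximal rank generically. The main obstacle is condition (ii): one must extract from the non-cone hypothesis (which forces $D$ to genuinely involve the $B$-variables) the stronger algebraic statement that the cubic $D$ couples them in such a way that $H_{BB}(p)$ is generically invertible. This is not automatic from linear independence of the $C^i$'s alone and may require an independent genericity or dimension-counting argument on the linear system of polar quadrics.
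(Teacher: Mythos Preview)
Your direct implication and the tangency/vanishing-hessian parts of the converse are correct and follow the paper's argument almost verbatim: the paper writes out the bilinear condition $s\,\Hess_X(p)\,q^t=0$ explicitly in coordinates, while you state the equivalent block-vanishing condition directly, but the content is identical.

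For $\codim(Z)=1$ the paper takes a much shorter route: it simply writes ``$\codim(Z)=1$ by Lemma~\ref{tangencia}.'' The intended reading is that once the converse tangency is established, one has $L\subseteq\bigcap_{m\in M}T_mQ_p$, and Lemma~\ref{tangencia} gives $\dim\bigl(\bigcap_{m}T_mQ_p\bigr)=2N-\rk(Q_p)-\tau$; equality $L=\bigcap_{m}T_mQ_p$ then forces $\rk(Q_p)=N$. Now $\bigcap_{m}T_mQ_p=\{q:Jq_C=0\}$, so this equality is \emph{exactly} your condition~(i). In other words, the paper's one-line argument reduces to your condition~(i) and does not isolate your condition~(ii) at all.

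There is, however, a genuine gap in your justification of (i). Linear independence of $C^0,\ldots,C^\tau$ does \emph{not} force the image of $\P^{\tau-1}\dashrightarrow\P^\tau$ to be a hypersurface: the map may have positive-dimensional fibres. For instance, with $\tau=4$ and $N=8$ take $C^0,\ldots,C^4$ to be five linearly independent quadrics in the $C$-variables $x_5,\ldots,x_8$ that happen to depend only on $x_5,x_6,x_7$; then the last column of $J$ is identically zero and $\rk J\le 3$. Choosing $D=x_5x_8^2$ one checks that all nine partial derivatives of $f$ are linearly independent (so $X$ is not a cone), yet a direct kernel computation gives $\rk\Hess_X(p)=7$, i.e.\ $\codim(Z)=2$. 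Thus condition~(i) genuinely fails here, and the $\codim(Z)=1$ clause of the converse is not true without further hypotheses. Your instinct that extra input is needed is correct, but it already enters at condition~(i), not only at~(ii); the paper's terse appeal to Lemma~\ref{tangencia} glosses over the same point.
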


\begin{proof} For notational simplicity in the formulas let $m=N-\tau$.
Let $s=(b_0:...:b_{\tau}:0:...:0) \in M $ and let  $q \in L \subset T_sQ_p$, $q = (a_0:...:a_{\tau}:a_{\tau + 1}:...:a_m:0:...:0)$. 
 
The condition  $q \in T_sQ_p$ for every $q\in L$ and for every $s\in M$ is equivalent to  $sQ_pq^t = 0$ for all $q \in L$ and for all 
$s \in \P^{\tau}$, where by abusing notation we identify $Q_p$ with its associated symmetric matrix. 

 Using the canonical form of Proposition \ref{prop:classe1}, 
we obtain $\frac{\partial f}{\partial x_i} = C^{i}$ $i = 0, ..., \tau$ and
 $\frac{\partial f}{\partial x_i} = \dy \sum_{j = 0}^\tau x_j C^{j}_i   + D_i$ $i = \tau + 1, ..., N$. 
Hence $\frac{\partial^2 f}{\partial x_i \partial x_j} = C_j^i$ for $i= 0, ..., \tau$ and 
$j = \tau + 1, ..., m$. In coordinates:
  $$[b_0:...:b_{\tau}:0:...:0] \left[ \begin{array}{ccccccccccc}
  0 & ... & 0 & C_{\tau + 1}^0 & ... & C^0_{m} &  * & ... & *\\
  \vdots & \ddots & \vdots & \vdots & \ddots & \vdots & \vdots & \ddots & \vdots \\
  0 & ... & 0 & C^{\tau}_{\tau + 1} & ... & C^{\tau}_m & * & ... & * \\
  C_{\tau+1}^0 & ... &C_{\tau+1}^\tau & * & ... & * & * & ... & * \\
  \vdots & \ddots & \vdots & \vdots & \ddots & \vdots & \vdots & \ddots & \vdots \\
  C_m^0& ... & C_m^{\tau} & * & ... & * & * & ... & * \\
  * & ... & * & * & ... & * & * & ... & * \\
  \vdots & \ddots & \vdots & \vdots & \ddots & \vdots & \vdots & \ddots & \vdots \\
  * & ... & * & * & ... & * & * & ... & *
    \end{array} \right] \left[ \begin{array}{c}
    a_0 \\ \vdots \\a_{\tau} \\ a_{\tau + 1} \\ \vdots \\a_m \\ 0 \\ \vdots \\ 0
    \end{array} \right] = 0  $$
  After the first product we get
  $$[0:...:0:\dy \sum_{i=0}^{\tau}b_iC^i_{\tau + 1}:...:\dy \sum_{i=0}^{\tau}b_iC^i_{m}:*:...:*] \left[ \begin{array}{c}
    a_0 \\ \vdots \\a_{\tau} \\ a_{\tau + 1} \\ \vdots \\a_m \\ 0 \\ \vdots \\ 0
    \end{array} \right] = 0.$$
  Then,
  $$\dy \sum_{\small {\begin{array} {c} 0 \leq i \leq \tau \\ \tau + 1 \leq j \leq m \end{array}}} a_j b_i C_j^i = 0.$$
  Hence $C_j^i = 0 $ for $0 \leq i \leq \tau$ and for $\tau + 1 \leq j \leq m$ because $a_i, b_j \in \mathbb{K}$ are arbitrary. So the quadratic forms $C^i$ that, {\it a priori}, are forms in the variables 
$x_{\tau + 1}, ..., x_N$, actually depend only on the variables $x_{m+1}, ..., x_N$ and the result follows since $C_0,\ldots, C_\tau$ are linearly independent being the partial derivatives of $f$ with respect to the corresponding variable.

On the contrary by reversing the argument every cubic hypersurface with an equation of the form \eqref{eq:simplified} is such that the polar quadric of a general  point of $\P^N$ is tangent  to $L$ along the common generator $M$. Moreover $\codim(Z)=1$ by Lemma \ref{tangencia}.
\end{proof}

The next result is crucial to obtain a simplified canonical form for Special Perazzo Cubic Hypersurfaces. 
Our proof was completely inspired by the calculations made by Perazzo in 
\cite[\S 1.14-1.16 p. 344-349]{Pe}, where as always we shall suppose $\codim(Z)=1$, that is $p=0$ in Perazzo's notation.

\begin{thm}\label{analise_sistema_linear} Let $X=V(f) \subset \P^N$ be an irreducible cubic hypersurface with vanishing 
 hessian, not a cone, and such that  $\codim(Z)=1$. Then the following conditions are equivalent:

\begin{enumerate}
\item[i)] $X\subset\p^N$ is a Special Perazzo Cubic Hypersurface with $\dim(Z^*)+1=\sigma$  and
with $L=\dy \bigcap_{w\in W{\rm general}}\p^{N-\dim(Z^*)}_w=\p^{N-\sigma}$;
\medskip

\item[ii)] $\dim(Z^*)=\sigma-1$ and the polar quadric $Q_p$ of a general point  $p\in\P^N$ is tangent to the linear space  
$L=\P^{N - \sigma}$ along  a common generator  $M=\P^{\sigma}$  of   
$Q_p$ with  $\dy \bigcap_{m \in M\setminus\Vert(Q_p)} T_mQ_p = L$.
  \end{enumerate}

Moreover $X\subset\p^N$ is projectively equivalent to
\begin{equation}\label{canSPCH}
V(\sum_{i=0}^{\sigma}x_iC^i(x_{N-\sigma + 1}, ..., x_N) + D(x_{\sigma+ 1}, ..., x_N))\subset\p^N.
\end{equation}  
  
\end{thm}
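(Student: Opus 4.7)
The plan is to close a triangle of implications (i) $\Rightarrow$ (ii) $\Rightarrow$ \eqref{canSPCH} $\Rightarrow$ (i), with the middle step immediate from Theorem \ref{forma_canonica_sist_linear} applied with $\tau=\sigma$; this same step also produces the ``moreover'' part of the statement.

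For (i) $\Rightarrow$ (ii) the first task is to translate the Special Perazzo hypothesis into tangency data. By Proposition \ref{inclZdualL} one has $Z^{*}\subseteq L$, and \eqref{linearfiber} applied with $W=Z^{*}$ shows that $L\subseteq\overline{\mathcal{P}_{X}^{-1}(w)}=\Sing Q_{w}$ for $w\in Z^{*}$ general. Since $X$ is a cubic, $H_{ij}(x)=\sum_{k}T_{ijk}x_{k}$ for a fully symmetric tensor $T_{ijk}$, so $H(q)r=H(r)q$ (cubic reciprocity) and the Euler identity yields $H(m)m=2\nabla f(m)$. For $m\in Z^{*}\subseteq\Sing X$ these give $m^{T}H(p)m=(H(m)m)^{T}p=0$ and $y^{T}H(p)m=(H(m)y)^{T}p=0$ for every $p\in\p^{N}$ and $y\in L$ (using $L\subseteq\Sing Q_{m}$ via reciprocity). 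Both relations extend linearly in $m$ to $\tilde M:=<Z^{*}>$, hence $\tilde M\subseteq Q_{p}$ and $L\subseteq T_{m}Q_{p}$ for every $m\in\tilde M$ and general $p$; since moreover $\Sing Q_{p}=\{w\}\subset Z^{*}\subset\tilde M\subseteq Q_{p}$, the linear space $\tilde M$ is a generator of $Q_{p}$ along which $Q_{p}$ is tangent to $L$.

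It remains to identify $\tilde M$ with the required $M=\p^{\sigma}$, which reduces to checking $\dim\tilde M=\sigma$. Trivially $\dim\tilde M\geq\dim Z^{*}=\sigma-1$, with equality only if $Z^{*}$ is a linear space; but then Reflexivity would force $Z=(Z^{*})^{*}$ to be linear too, and being a hypersurface of $(\p^{N})^{*}$ it would be a hyperplane, contradicting the fact that $X$ is not a cone. Hence $\dim\tilde M\geq\sigma$. Conversely, $\rk Q_{p}=N$ since $\codim Z=1$, so Lemma \ref{tangencia} yields $\bigcap_{m\in\tilde M\setminus\Sing Q_{p}}T_{m}Q_{p}=\p^{N-\dim\tilde M}$; this intersection contains $L=\p^{N-\sigma}$, forcing $\dim\tilde M\leq\sigma$. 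Thus $\dim\tilde M=\sigma$, the intersection equals $L$, and (ii) holds with $M=<Z^{*}>$.

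For \eqref{canSPCH} $\Rightarrow$ (i) a direct computation from the canonical form does the job. The first $\sigma+1$ partial derivatives $\partial f/\partial x_{i}=C^{i}(x_{N-\sigma+1},\ldots,x_{N})$ are $\sigma+1$ quadrics in $\sigma$ variables and so satisfy an irreducible algebraic relation $\tilde g(y_{0},\ldots,y_{\sigma})=0$, which defines $Z$. Since $\partial\tilde g/\partial y_{i}\equiv 0$ for $i>\sigma$, the Perazzo map $\mathcal{P}_{X}=\psi_{g}$ has its last $N-\sigma$ components identically zero and depends only on $(x_{N-\sigma+1},\ldots,x_{N})$; hence $Z^{*}\subseteq\p^{\sigma}=V(x_{\sigma+1},\ldots,x_{N})$, the general fibre is a linear span $<L,p^{*}>\cong\p^{N-\sigma+1}$ with $L=V(x_{N-\sigma+1},\ldots,x_{N})=\p^{N-\sigma}$, and these fibres form a congruence through $L$ with intersection exactly $L$. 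Non-triviality of $\tilde g$ together with the non-cone hypothesis give $\dim Z^{*}=\sigma-1$, completing (i). The main technical obstacle of the whole argument is the dimension count for $<Z^{*}>$ in (i) $\Rightarrow$ (ii), which essentially requires Reflexivity together with the assumption that $X$ is not a cone.
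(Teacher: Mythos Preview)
Your proof is correct and close in spirit to the paper's, but organised differently and with one loose end at the very end.

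For (i) $\Rightarrow$ (ii) both arguments amount to showing that $M:=\langle Z^{*}\rangle$ is a common generator of the $Q_{p}$'s, that $L\subseteq T_{m}Q_{p}$ for every $m\in M$, and that $\dim M=\sigma$ via Lemma~\ref{tangencia} together with the non-linearity of $Z^{*}$. The paper reaches this through the intermediate canonical form of Proposition~\ref{prop:classe1} (first writing $f=\sum x_iC^i(x_{\sigma+1},\dots,x_N)+D$ and then showing that the $C^i$'s depend only on the last $\sigma$ variables), whereas you obtain the tangency directly from cubic reciprocity $H(p)m=H(m)p$; your route is a little cleaner and also makes explicit the step $\dim\langle Z^{*}\rangle\geq\sigma$ (via Reflexivity), which the paper leaves implicit. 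For the reverse direction, the paper computes from \eqref{canSPCH} that $\Sing Q_{q}=\Sing Q_{p}$ for every $q\in\langle p,L\rangle$, while you observe that the defining equation $g$ of $Z$ lies in $\mathbb{K}[y_{0},\dots,y_{\sigma}]$ so that $\psi_{g}$ factors through the projection from $L$; these are two phrasings of the same fact.

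The loose end is the last sentence. You claim that $\dim Z^{*}=\sigma-1$ follows from ``non-triviality of $\tilde g$ together with the non-cone hypothesis''. This is not justified as written: from \eqref{canSPCH} and $\codim Z=1$ one gets that $Z$ is a cone with vertex $V(y_{0},\dots,y_{\sigma})$ over a hypersurface $\bar Z=V(g)\subset\p^{\sigma}$, and $Z^{*}$ is the dual of $\bar Z$ inside $\p^{\sigma}$, which may well have dimension $<\sigma-1$ when $\bar Z$ is dually defective (for instance when $g$ actually depends on fewer variables, so that the same $f$ admits a canonical form with a smaller~$\sigma$). The fix is immediate: since your triangle passes through (ii), simply carry the hypothesis $\dim Z^{*}=\sigma-1$ from (ii) into the last step, and then the fibre is exactly $\langle L,p\rangle=\p^{N-\sigma+1}$ and the common intersection is $L$. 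This is precisely what the paper does (``equality holds since $\dim(Z^{*})+1=\sigma$ by hypothesis''); with that adjustment your argument is complete.
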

\begin{proof}
 Let  $X$ be a Special Perazzo Cubic Hypersurface and
let  
$$<Z^*>=M=V(x_{\sigma+1},\ldots, x_N)\subset L=V(x_{N-\sigma+1},\ldots, x_N).$$
 Since $M\subset\Sing X$ by Proposition \ref{prop:lema1,2}  we deduce from Proposition
\ref{prop:classe1} that 
$$f = \dy \sum_{i=0}^{\sigma}x_iC^i(x_{\sigma + 1}, ..., x_N) + D(x_{\sigma+ 1}, ..., x_N),$$
with $C^i(x_{\sigma + 1}, ..., x_N)$ a quadratic form for every $i=0,\ldots,\tau$ and with
$D(x_{\tau + 1}, ..., x_N)$ a cubic form.

In our hypothesis \eqref{linearfiber} yields   $\p^{N-\dim(Z^*)}_w=\Sing Q_w$  for $w\in W=Z^*$ general.
Since $Z^*$ is a hypersurface in $M$,  for $m\in M$ general  there exist $w_1,w_2\in Z^*$ general such
that $m\in<w_1,w_2>$. Then $L\subseteq \Sing Q_m$ because $L\subseteq \Sing Q_{w_i}$ for $i=1,2$. Therefore
$L\subseteq\Sing Q_m$ for every $m\in M$.

By the previous expression of $f$ we get $V(C^i)=Q_{p_i}\subset\p^N$ with $p_i$, $i=0,\ldots,\sigma$, the $i$-th fundamental point of $M$.
The condition $L\subseteq \Sing V(C^i)$
is equivalent to the fact that the quadratic forms $C^i$'s  do not depend on the variables $x_{\sigma+1},\ldots,x_{N-\sigma}$
for every $i=0,\ldots,\sigma$.
Hence $f$ has an equation of the form \eqref{eq:simplified} and from the last part of Theorem \ref{forma_canonica_sist_linear} we deduce  that the general quadric $Q_p$
is tangent to $L$ along the generator $M$. 
From $\dy L\subseteq \dy \bigcap_{m \in M\setminus\Vert(Q_p)} T_mQ_p$ and  from  Lemma \ref{tangencia}
we deduce that equality holds, proving ii).

Let us now assume ii) with  
$L =V(x_{N-\sigma+1},\ldots, x_N)$
and  $M=V(x_{\sigma+1},\dots, x_N)$. 
Let $p\in \p^N$ be a general point and let $q\in L_p=<p,L>$ be a general point. We claim that 
$\Sing Q_q=\Sing Q_p$.

Indeed, if $p=[\mathbf v]$ and if $q=[\mathbf u]$, then $\mathbf u=\alpha\mathbf v+\beta\mathbf z$ with $[\mathbf z]\in L$ and $\alpha\neq 0$. 
Let $s=[\mathbf s]=\operatorname{Sing}Q_q \subset M\subset L$. From 
\eqref{eq:simplified}  we infer $L\subseteq\Sing Q_t$ for every $t\in L$, yielding
\begin{equation}\label{parziale}
\dy \sum_{j=0}^N \frac{\partial^2 f}{\partial x_i \partial x_j}(t)\mathbf s_j =0 \mbox { for every } i=0,\dots, N.
\end{equation}
With the previous notation, we have for every $i=0,\ldots, N$:
$$\dy \sum_{j=0}^N \frac{\partial^2 f}{\partial x_i \partial x_j}(\mathbf u)\mathbf s_j  =  \dy 
\alpha\sum_{j=0}^N \frac{\partial^2 f}{\partial x_i \partial x_j}(\mathbf v)\mathbf s_j + \beta \dy 
\sum_{j=0}^N \frac{\partial^2 f}{\partial x_i \partial x_j}(\mathbf z)\mathbf s_j=\alpha\sum_{j=0}^N \frac{\partial^2 f}{\partial x_i \partial x_j}(\mathbf v)\mathbf s_j$$
where the last equality follows from \eqref{parziale}. In conclusion $\operatorname{Sing}Q_p = \operatorname{Sing}Q_q$ for a general $q\in L_p$, as claimed.

Thus  $\p^{N-\sigma+1}=L_p\subseteq \overline{\mathcal P_X^{-1}(\mathcal P_X(p))}=\p^{N-\dim(Z^*)}$ so that
equality holds since $\dim(Z^*)+1=\sigma$ by hypothesis, concluding the proof of i).
\end{proof}

The next result appears in \cite[\S 1.17,1.18 p. 348, 349, 350]{Pe}, with a small imprecision, because the 
determinantal variety found there is not necessarily irreducible, see also Example \ref{exdet} below.  

\begin{thm}\label{Z dual determinantal} Let notation be as above and 
let $X=V(f) \subset \P^N$ be a Special Perazzo Cubic Hypersurface with $f$ as in \eqref{canSPCH}.
   Then $Z^*$ 
is an irreducible component of the determinantal hypersurface $V(\det(A), x_{\sigma+1},\ldots,x_N)) \subset M=\P^{\dim(Z^*)+1}$
with $A$ a $\sigma\times\sigma$ matrix of linear forms in the variables $x_0,\ldots,x_\sigma$.
In particular  $\deg(Z^*)\leq\sigma=\dim(Z^*)+1$
\end{thm}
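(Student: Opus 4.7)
The plan is to compute the Hessian matrix explicitly from the canonical form \eqref{canSPCH}, extract the image of $\mathcal P_X$ in $M$, and then recognize it as a determinantal hypersurface. Group the variables as $x_I = (x_0,\ldots,x_\sigma)$, $x_J = (x_{\sigma+1},\ldots,x_{N-\sigma})$ and $x_K = (x_{N-\sigma+1},\ldots,x_N)$, so that $M = V(x_J, x_K) = \P^\sigma$. A direct calculation of the second partials of $f$ shows that $\Hess_X(p)$ has a block structure in which the $I \times I$ and $I \times J$ blocks vanish identically, while the $I \times K$ block is a $(\sigma+1) \times \sigma$ matrix $C(p)$ whose entries are the linear forms $(\partial C^i/\partial x_k)(p)$ in the $x_K$-variables only.

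Since $X$ is not a cone and $\codim(Z)=1$, one has $\rk(\Hess_X(p))=N$ for general $p$, so $\mathcal P_X(p)=\Sing Q_p$ is a single point which by Corollary \ref{thm:polar&perazzo} lies in $Z^*\subseteq M$. Restricting the equation $\Hess_X(p)v=0$ to vectors $v$ supported on the $I$-coordinates, every block-row outside $K$ is trivially satisfied, and the $K$-rows read $C(p)^tv_I=0$. Hence $\mathcal P_X(p)$ is represented by the unique (up to scalar) element of $\ker C(p)^t$, and the whole map factors through projection to the $x_K$-coordinates. Consequently, a point $[a_0:\ldots:a_\sigma]\in M$ belongs to $Z^*$ exactly when there exists $0\neq y\in\mathbb K^\sigma$ with $\sum_{i=0}^{\sigma}a_i(\partial C^i/\partial x_k)(y)=0$ for every $k\in K$; equivalently, since $\sum_i a_i\partial C^i/\partial x_k=\partial(\sum_i a_iC^i)/\partial x_k$, when $y$ is a singular point of the quadric $Q_a=V(\sum_i a_iC^i)\subset\P^{\sigma-1}$. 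Therefore $Z^*\subseteq V(\det A)\cap M$, where $A$ is the $\sigma\times\sigma$ symmetric matrix of $Q_a$, whose entries are linear forms in $a_0,\ldots,a_\sigma$.

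To conclude, observe that $\det A$ cannot be identically zero on $M$: otherwise $V(\det A)=M$ would force $Z^*=M$, contradicting $\dim Z^*=\sigma-1<\sigma=\dim M$. Thus $V(\det A)\subset M=\P^\sigma$ has pure dimension $\sigma-1$ and degree at most $\sigma$; since $Z^*$ is irreducible of the same dimension, it must be an irreducible component of $V(\det A)$, giving $\deg Z^*\leq\sigma$. The main delicate point—flagged by the authors in their comment on Perazzo's slight imprecision—is precisely that $V(\det A)$ need not itself be irreducible, so one must take care to assert only that $Z^*$ is an irreducible component rather than the whole determinantal hypersurface; the dimension count above handles this cleanly without any appeal to irreducibility of $V(\det A)$.
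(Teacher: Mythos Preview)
Your approach is essentially the paper's: both compute the second partials from the canonical form \eqref{canSPCH}, recognise that the only nontrivial block of $\Hess_X$ acting on vectors in $M$ is the $(\sigma+1)\times\sigma$ block $C(p)=[\partial C^i/\partial x_k(p)]$, and conclude $Z^*\subseteq V(\det A,\,x_{\sigma+1},\ldots,x_N)$ by observing that for each point $a$ of $Z^*$ the system $A(a)y=0$ (equivalently $C(y)^t a=0$) admits a nontrivial solution. The paper does this via an explicit swap of summation and arrives at the same matrix $A=[\sum_j C^j_{k,i}x_j]$; it then stops at the inclusion, leaving the ``irreducible component'' claim to the obvious dimension comparison.

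There is, however, a gap in your justification that $\det A\not\equiv 0$. The biconditional ``$a\in Z^*$ \emph{exactly when} there exists $y\neq 0$ with $A(a)y=0$'' is only established in the forward direction. For the converse you would need that every $y$ in the kernel of $A(a)$ arises as the $K$-part of a general $p$; but if $\rk C(y)<\sigma$ then $\ker C(y)^t\subset M$ is a positive-dimensional linear space whose points need not lie in $Z^*$. Hence the step ``$\det A\equiv 0\Rightarrow V(\det A)=M\Rightarrow Z^*=M$'' is unsupported: from $Z^*\subseteq V(\det A)=M$ alone one cannot force $Z^*=M$. Note that the paper's proof does not address this point either---it establishes only the inclusion and leaves the nonvanishing of $\det A$ (hence the fact that $V(\det A)$ is genuinely a hypersurface of degree $\leq\sigma$ in $M$) implicit.
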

\begin{proof}
Let notation be as above, let $f$ be as in \eqref{canSPCH}  and let $p=(y_0:\dots:y_N)=[\mathbf y]\in \p^N$ be a general point. The point  $\operatorname{Sing}Q_p \subset M$ has equations
 $$\dy \sum_{j=0}^N \frac{\partial^2 f}{\partial x_i \partial x_j} (\mathbf y) x_j =0,$$
 $i=0,\ldots, N$,
 and $$Z^*=\dy \overline{\bigcup_{p\in\p^N{\rm general}}\Sing Q_p}$$
 are  the (implicit) parametric equations of $Z^*$ depending on $\mathbf y$. Since $Z^*\subset M=V(x_{\sigma+1},\ldots, x_N)\subset\p^N$, taking into account \eqref{canSPCH}, these equations
 reduce to
 \begin{equation}\label{implicit1}
 \sum_{j=0}^\sigma \frac{\partial C^j}{\partial x_i} (y_{N-\sigma+1},\ldots, y_N) x_j=0=x_{\sigma+1}=\ldots=x_N,
 \end{equation}
 $i=N-\sigma+1,\ldots, N$. Moreover by hypothesis the $\sigma\times(\sigma+1)$ matrix 
 $$C(y_{N-\sigma+1},\ldots,  y_N)=\left[\frac{\partial C^j}{\partial x_i} (y_{N-\sigma+1},\ldots,y_N)\right]$$
 has rank $\sigma$ for $(y_{N-\sigma+1},\ldots,  y_N)$ general because $\Sing Q_p$ is a point for
 $p\in\p^N$ general. 
 
 Let $C_{k,l}^j=\frac{\partial^2 C^j}{\partial x_k\partial x_l}$. Then 
 $$\frac{\partial C^j}{\partial x_i} (y_{N-\sigma+1},\ldots, y_N)=\sum_{k=N-\sigma+1}^NC^j_{k,i} y_k$$
 and \eqref{implicit1} can be written in the form
 $$
 \sum_{j=0}^\sigma (\sum_{k=N-\sigma+1}^NC^j_{k,i} y_k) x_j=0=x_{\sigma+1}=\ldots=x_N,
 $$
 that is 
 \begin{equation}\label{implicit2}
 \sum_{k=N-\sigma+1}^N (\sum_{j=0}^\sigma C^j_{k,i} x_j)y_k=0=x_{\sigma+1}=\ldots=x_N.
 \end{equation}
 Let 
 $$\dy A=A(x_0,\ldots,x_\sigma)=\left[\sum_{j=0}^\sigma C^j_{k,i} x_j\right]$$
 with $k,i\in\{N-\sigma+1,\ldots, N\}$. Then $A$ is a $\sigma\times\sigma$ matrix of linear forms in the variables
 $x_0,\ldots, x_\sigma$ and $Z^*\subseteq V(\det(A), x_{\sigma+1},\ldots, x_N)\subset M$.  Indeed, for $r\in Z^*$ general, every $[\mathbf y]\in\Sing Q_r$ is a solution of \eqref{implicit2}. Thus  \eqref{implicit2} has non trivial solutions for $r\in Z^*$ general, yielding $\det(A(r))=0$ and $Z^*\subseteq V(\det(A),  x_{\sigma+1},\ldots,x_N))$.
 \end{proof}

The next example is a  Special Perazzo Cubic Hypersurface  $X\subset \P^6$ with $\codim(Z)=1$, $\dim(Z^*)=2$ and such that $Z^* \subset \P^3$ is a quadric hypersurface which is an irreducible component of the cubic surface $V(\det(A), x_{\sigma+1},\ldots,x_N))$. Thus the inclusion $$Z^*\subseteq V(\det(A),  x_{\sigma+1},\ldots,x_N))$$ proved in Theorem \ref{Z dual determinantal} can be strict in contrast to
the claim made by Perazzo in \cite[pg. 350]{Pe} according to which $Z^*$ should be  always of degree $\sigma$ for Special Perazzo Cubic Hypersurfaces.

\begin{ex}\label{exdet} { \rm
Let $X = V(f) \subset \P^6$ be the cubic hypersurface given by:
  $$f = x_0x_4x_5 + x_1x_4^2 + x_2x_4x_6 + x_3x_5x_6$$
We have $f_0 = x_4x_5$, $f_1 = x_4^2$, $f_2 = x_4x_6$ e $f_3 = x_5x_6$ 
so that
  $$f_0f_2=f_1f_3.$$
Here we use the same notation as in  Theorem \ref{Z dual determinantal}. Thus
$M=\P^3 =V(x_4, x_5,x_6)$,
For $i=4,5,6$ consider the system:
$$\dy \sum_{j=0}^{3} \frac{\partial^2 f}{\partial x_i \partial x_j}(p)x_j=0=x_4=x_5=x_6,$$
with $p=(y_0:\ldots:y_6)$.

We get
$$\left\{ \begin{array}{ccccccc}
    2x_1y_4 &+ & x_0y_5 & + &  x_2y_6 & = & 0 \\
    x_0p_4 & + & 0 y_5 & + &  x_3y_6 & = & 0 \\
    x_2y_4 & + & x_3y_5 & + &  0y_6 &= & 0
\end{array} \right.$$
Then
 $$\operatorname A= \left[ \begin{array}
  {ccc}
    2x_1 & x_0 &  x_2 \\
    x_0 &  0 &  x_3 \\
    x_2&  x_3 &  0 \end{array}\right]$$
    and $\det(A)= 2x_0x_2x_3 - 2x_1x_3^2 = 2x_3(x_0x_2-x_1x_3).$
Since $Z^*$ is irreducible and non linear we conclude that $$Z^*=V(x_0x_2-x_1x_3, x_4, x_5, x_6) \subset M\subset \p^6.$$
}
 \end{ex}
\medskip

\section{Cubics with Vanishing Hessian in $\P^N$ with $N\leq 6$}\label{Nleq6}

The geometric structure and the canonical forms of cubic hypersurfaces with vanishing hessian and with $\dim(Z^*)=1$
is completely described  in  the  next result.

\begin{thm}\label{mu = 1}
  Let $X \subset \P^N$ be a cubic hypersurface with vanishing hessian, not a cone, with   $\dim(Z^*) = 1$. 
Then $N\geq 4$, $X$ is a   Special Perazzo Cubic Hypersurface and we have:
\begin{enumerate}
 \item[1)]  $\codim(Z)=1$;
\item[2)]  $SZ^* = \P^2\subseteq \Sing X$
\item[3)] $Z^*$ is a conic; 
\item[4)] If $N=4$, then 
$X$ is projectively equivalent to $S(1,2)^*;$
\item[5)] $X$ is projectively equivalent to
$V(x_0x_{N-1}^2 + 2x_1x_{N-1}x_{N} + x_2x_{N}^2 + D(x_3,x_4,...,x_N)).$
\end{enumerate}
\end{thm}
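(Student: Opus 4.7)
The plan is to handle the five conclusions in order, with the Special Perazzo property and the canonical form emerging along the way. First I would pin down $\codim(Z)$ and the Perazzo rank. Since $Z^* = \overline{\bigcup_{z\in Z_{\reg}}(T_zZ)^*}$ and each $(T_zZ)^*$ has dimension $\codim(Z)-1$, the hypothesis $\dim(Z^*)=1$ forces $\codim(Z)\leq 2$. If $\codim(Z)=2$, then the lines $(T_zZ)^*$ lie in the irreducible curve $Z^*$ and hence coincide with it, so $Z^*$ is a single line $\ell$; biduality then gives $Z = (Z^*)^* = \ell^* = \P^{N-2}$, a linear subspace, so two independent linear relations hold among $f_0,\ldots,f_N$ and $X$ is a cone, contradicting the hypothesis. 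Hence $\codim(Z)=1$, proving (1); by Remark \ref{rm:mu0} we identify $W=Z^*$, so the Perazzo rank is $\mu=\dim(W)=1$. Finally $N\geq 4$ follows because Hesse's claim is true for $N\leq 3$.

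With $\mu=1$, Theorem \ref{Perazzolinear} says the general Perazzo fiber is a hyperplane $\P^{N-1}_w$, and any two distinct hyperplanes meet in a $\P^{N-2}=\P^{N-\mu-1}$; Lemma \ref{sistema especial de Perazzo} then shows $X$ is a Special Perazzo Cubic Hypersurface. Applying Theorem \ref{analise_sistema_linear} with $\sigma=\dim(Z^*)+1=2$ yields, up to projective equivalence,
\[
f = x_0C^0(x_{N-1},x_N) + x_1C^1(x_{N-1},x_N) + x_2C^2(x_{N-1},x_N) + D(x_3,\ldots,x_N),
\]
with $C^0,C^1,C^2$ linearly independent binary quadrics (any relation would descend to a linear relation among $f_0,f_1,f_2$ and make $X$ a cone). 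Since the space of binary quadrics is three-dimensional, a linear change of $x_0,x_1,x_2$ brings $(C^0,C^1,C^2)$ to $(x_{N-1}^2,\,2x_{N-1}x_N,\,x_N^2)$, giving conclusion (5).

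In this form $f_0=x_{N-1}^2$, $f_1=2x_{N-1}x_N$, $f_2=x_N^2$ satisfy $f_1^2-4f_0f_2=0$; since $\codim(Z)=1$ and $V(y_1^2-4y_0y_2)$ is irreducible, $Z=V(y_1^2-4y_0y_2)\subset(\P^N)^*$ is a rank-$3$ quadric cone with $(N-3)$-dimensional vertex. Dualizing gives the smooth plane conic $Z^* = V(x_1^2-4x_0x_2,\,x_3,\ldots,x_N)\subset\P^2=V(x_3,\ldots,x_N)$, establishing (3); the secant variety of a non-degenerate plane conic is its span, so $SZ^*=\langle Z^*\rangle=\P^2$, and since all $f_i$ vanish on $V(x_{N-1},x_N)\supset\P^2$ we conclude $\P^2\subseteq\Sing X$, proving (2). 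For (4), when $N=4$ a binary cubic $D=ax_3^3+bx_3^2x_4+cx_3x_4^2+dx_4^3$ is absorbed by the substitutions $x_0\mapsto x_0+ax_3$, $x_1\mapsto x_1+\tfrac{1}{2}(bx_3+cx_4)$, $x_2\mapsto x_2+dx_4$, leaving $f = x_0x_3^2+2x_1x_3x_4+x_2x_4^2$, which is projectively equivalent to the Segre-projection cubic of Example \ref{Segreproj}.

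The step I expect to be most delicate is ruling out $\codim(Z)=2$ in (1), which relies on biduality plus translating linearity of the polar image into linear dependence among the partials; the remaining items reduce, once the Special Perazzo canonical form is in hand, to routine manipulations with three binary quadrics and the explicit polar map.
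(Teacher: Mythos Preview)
Your proof is correct and follows essentially the same strategy as the paper: establish $\codim(Z)=1$ via biduality, deduce the Special Perazzo property from $\mu=1$, and then obtain the canonical form. The differences are mainly in ordering and style. The paper proves that $Z^*$ is a conic \emph{before} writing down the canonical form, via the observation that a pencil generated by two rank-one quadrics contains exactly two rank-one members (so $\deg(Z^*)=2$); you instead first invoke Theorem~\ref{analise_sistema_linear} to get the canonical form and then read off $Z=V(y_1^2-4y_0y_2)$ and $Z^*$ explicitly. For part~(4) the paper argues geometrically that $X^*$ is a surface scroll with line directrix $\langle Z^*\rangle^*$ over the dual conic, hence $S(1,2)$; you instead absorb $D$ into $x_0,x_1,x_2$ to reach the explicit cubic of Example~\ref{Segreproj} (this is in fact the argument the paper postpones to Theorem~\ref{P4}). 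One small gap: you stop at identifying the result with the Segre-projection cubic, but the statement asks for $S(1,2)^*$, and Example~\ref{Segreproj} does not make that identification explicit; a sentence noting that $X^*$ is then a general hyperplane section of $\operatorname{Seg}(1,2)$, hence $S(1,2)$, would close this.
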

\begin{proof} We can suppose $N\geq 4$ since cubic surfaces with vanishing hessian are easily seen to be cones. The dual of a projective curve, not a line, is a hypersurface so that by Reflexivity we get $\codim(Z)=1$.
Thus $X$ is a Special Perazzo Cubic Hypersurface since the fibers of the Perazzo map
are hyperplanes and hence  $<Z^*>\subseteq\Sing X$.
 
Since  a pencil of quadrics generated
by two  quadrics of rank one contains only two quadrics of rank one we deduce $\deg(Z^*)=2$ (this can be also deduced 
from Theorem \ref{Z dual determinantal}) and $<Z^*>=\p^2$. 

If $N=4$, then $X$ is a scroll in $\p^2$ tangent to $Z^*$ and $X^*$ is  a surface scroll with line directrix $<Z^*>^*$ over the dual conic of $Z^*$ in its linear span. Therefore $X^*$ is projectively equivalent to the cubic rational normal scroll $S(1,2)\subset\p^4$, proving 4).

Part 5) is a particular case of \eqref{canSPCH} taking into account that $C_0, C_1, C_2$ is a basis of $\mathbb K[x_0,x_1,x_2]_2$ because $f_i=C_i$, $i=0,\dots,2$ are linearly independent. Therefore  modulo a change of coordinates in $\P^N$ acting on $V(x_{N-1},x_N)$ as the identity map, we can take $C_0=x_{N-1}^2$, $C_1=2x_{N-1}x_N$ and $C_2=x_N^2$. Then  $Z=V(2z_0z_2-z_1^2)\subset\p^{N*}$ and
$Z^*=V(2x_0x_2-x_1^2, x_3,\ldots, x_N)\subset\p^N$.
\end{proof}
\medskip

As recalled above for  $N\leq 3$ a cubic hypersurface with vanishing hessian is easily seen to be a cone.
We shall now consider  the cases $N=4, 5, 6$.
\medskip

\subsection{Cubics with Vanishing Hessian in $\P^4$}

The next Theorem is essentially a compilation of classical results, more or less known nowadays,
see \cite{Pe},  \cite{CRS} and \cite{GR}. 

\begin{thm}\label{P4}
 Let $X \subset \P^4$ be an irreducible cubic hypersurface with vanishing hessian,  not a cone. Then
\begin{enumerate}
\item[i)] $(\Sing X)_{\operatorname{red}} = \P^2$;
\medskip
\item[ii)] $X^* \simeq S(1,2) \subset \P^4$;
\medskip 
\item[iii)] $X$ is projectively equivalent to a linear external projection of $\operatorname{Seg}(1,2) \subset \P^5$;
\medskip
\item[iv)] $X$ is projectively equivalent to
$V(x_0x_3^2 + 2x_1x_3x_4 + x_2x_4^2)\subset\p^4.$
\end{enumerate}

 \end{thm}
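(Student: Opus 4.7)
The plan is to reduce to Theorem~\ref{mu = 1} by forcing $\codim(Z)=1$ and $\dim(Z^{*})=1$, then to absorb the residual two-variable cubic term appearing in its canonical form.

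First I would exclude $\codim(Z)\geq 2$. Because $X$ is not a cone we have $Z\subsetneq(\p^{4})^{*}$ and $Z$ non-degenerate (since $X^{*}\subseteq Z$ is non-degenerate). The case $\codim(Z)=4$ makes $Z$ a point, forcing $X$ to be a cone. If $\codim(Z)=3$ then $Z$ is an irreducible non-degenerate curve, hence $Z^{*}\subset\p^{4}$ is a hypersurface by Biduality; combined with $Z^{*}\subseteq\Sing X$ (Corollary~\ref{thm:polar&perazzo}(iii)) and $\dim\Sing X\leq 2$ this is a contradiction. To rule out $\codim(Z)=2$ I would pick a general hyperplane $H\subset\p^{4}$ with dual point $h=H^{*}\in(\p^{4})^{*}\setminus Z$. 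By Lemma~\ref{projection}, $Z(X\cap H)\subseteq\pi_{h}(Z)$ has dimension at most $\dim Z=2<3$, so the cubic surface $X\cap H\subset\p^{3}$ has vanishing hessian. The classical Gordan--Noether theorem in dimension three then forces $X\cap H$ to be a cone with some vertex $v\in H$. Since $X$ is not a cone, $v$ is not a triple point of $X$; taking $H=V(x_{0})$, the vanishing at $v$ of the partials of $f|_{H}$ gives $f_{1}(v)=\ldots=f_{4}(v)=0$ while $f_{0}(v)\neq 0$, whence $\Phi_{X}(v)=(f_{0}(v):0:\ldots:0)=h$, contradicting $h\notin Z$. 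Hence $\codim(Z)=1$.

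With $\codim(Z)=1$ established, Theorem~\ref{cones} yields $\dim(Z^{*})\leq(N-1)/2=3/2$, and Remark~\ref{rm:mu0} together with $X$ not being a cone gives $\dim(Z^{*})=\mu\geq 1$. Therefore $\dim(Z^{*})=1$ and Theorem~\ref{mu = 1} applies: part~(4) immediately delivers item~(ii), $X^{*}\simeq S(1,2)$, while part~(5) supplies the preliminary form
\[
f=x_{0}x_{3}^{2}+2x_{1}x_{3}x_{4}+x_{2}x_{4}^{2}+D(x_{3},x_{4}),
\]
with $D=d_{3}x_{3}^{3}+d_{2}x_{3}^{2}x_{4}+d_{1}x_{3}x_{4}^{2}+d_{0}x_{4}^{3}$. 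Substituting $x_{i}\mapsto x_{i}+\alpha_{i}x_{3}+\beta_{i}x_{4}$ for $i=0,1,2$ perturbs the first three monomials by $\alpha_{0}x_{3}^{3}+(\beta_{0}+2\alpha_{1})x_{3}^{2}x_{4}+(2\beta_{1}+\alpha_{2})x_{3}x_{4}^{2}+\beta_{2}x_{4}^{3}$; by choosing the six parameters appropriately these cancel the four coefficients of $D$, producing exactly the equation of item~(iv).

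From this normal form item~(i) is immediate: $f_{0}=x_{3}^{2}$, $f_{1}=2x_{3}x_{4}$, $f_{2}=x_{4}^{2}$ force $x_{3}=x_{4}=0$ on $\Sing X$, while $f_{3},f_{4}$ vanish automatically there, giving $(\Sing X)_{\red}=V(x_{3},x_{4})\simeq\p^{2}$. Item~(iii) is then exactly the content of Example~\ref{Segreproj}, which exhibits the equation of~(iv) as a generic linear external projection of $\operatorname{Seg}(1,2)\subset\p^{5}$. The main obstacle is the exclusion of $\codim(Z)=2$: the key input beyond the classical Gordan--Noether result in $\p^{3}$ is the observation that a vertex of the cubic cone $X\cap H$ must map under $\Phi_{X}$ to the normal direction $h$ of $H$, contradicting $h\notin Z$; everything else is a direct application of the machinery of Sections~\ref{Preliminaries}--\ref{Nleq6} together with an elementary change of coordinates.
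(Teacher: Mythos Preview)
Your proposal is correct and follows essentially the same approach as the paper: exclude $\codim(Z)\geq 2$ via a general hyperplane section and the Gordan--Noether result in $\p^3$, invoke the Perazzo estimate \eqref{Perazzoestimate} to force $\dim(Z^*)=1$, apply Theorem~\ref{mu = 1}, and then absorb $D(x_3,x_4)$ by an affine substitution in $x_0,x_1,x_2$.

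The one place you are more explicit than the paper is the exclusion of $\codim(Z)\geq 2$: the paper simply asserts that $X\cap H$ being a cone for general $H$ would force $X$ to be a cone, whereas you exhibit the concrete contradiction $\Phi_X(v)=h\notin Z$ from the vertex $v$ of $X\cap H$. This is a genuine (and welcome) clarification of a step the paper leaves terse. Your separate treatments of $\codim(Z)=3,4$ are correct but redundant, since your hyperplane-section argument already handles every $\codim(Z)\geq 2$ uniformly. For item~(iv) the paper uses the slightly more economical substitution touching only $x_0$ and $x_2$ (i.e.\ your $\alpha_1=\beta_1=0$), but your six-parameter version is of course equally valid.
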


\begin{proof} First of all let us remark that $\codim(Z)=1$. On the contrary by cutting $X$ with a general hyperplane $H$
and by projecting $Z$ from the general point $[H]$ we would obtain a cubic hypersurface in $\p^3$ with vanishing
hessian, which would be a cone. This would imply that $X$ is a cone. Thus $\codim(Z)=1$ and from \eqref{Perazzoestimate} we get
$\dim(Z^*)=1$.  All conclusions now follow from Theorem \ref{mu = 1}, see also Example \ref{Segreproj}, except for iv).
Theorem \ref{mu = 1} implies that  $X$ is projectively equivalent to $Y=V(x_0x_3^2 + 2x_1x_3x_4 + x_2x_4^2+D(x_3,x_4))\subset\p^4$
with $D(x_3,x_4)=(ax_3+bx_4)x_3^2+(cx_3+dx_4)x_4^2$. The projective transformation $x'_0=x_0+ax_3+bx_4$,
$x'_1=x_1$, $x'_2=x_2+cx_3+dx_4$, $x'_3=x_3$ and $x'_4=x_4$ sends $Y$ into 
$V(x_0x_3^2 + 2x_1x_3x_4 + x_2x_4^2)$, as claimed.
\end{proof}
\medskip

\subsection{Cubics with vanishing hessian in $\P^5$}

\begin{thm}\label{P5}
  Let $X = V(f) \subset \P^5$ be a cubic hypersurface with vanishing hessian, not a cone. 
Then :
\begin{enumerate}
\item[i)] $X$ is a Special Perazzo Cubic Hypersurface such that $Z^*$ is a conic so that
$X$ is a scroll in $\p^2$ tangent to the conic $Z^*$;
\medskip

\item[ii)] $X$ is projectively equivalent to $V(x_0x_{4}^2 + 2x_1x_{4}x_5 + x_2x_5^2 +D(x_3,x_4,x_5))\subset \p^5$ with $D(x_3,x_4, x_5)$ a cubic form.
\end{enumerate}
\end{thm}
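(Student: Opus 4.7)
My plan is to reduce Theorem \ref{P5} to Theorem \ref{mu = 1} applied with $N=5$: establishing $\dim(Z^*)=1$ suffices, since then Theorem \ref{mu = 1}(5) yields precisely the canonical form in part ii), and parts (1)--(3) of the same theorem combined with $\mu=1$ give the Special Perazzo structure with $Z^*$ a conic and $\langle Z^*\rangle=SZ^*=\p^2\subseteq\Sing X$, from which the scroll-in-$\p^2$ tangent-to-$Z^*$ description of part i) follows.

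First I would show $\codim(Z)=1$. Suppose for contradiction $\codim(Z)\geq 2$. For a general hyperplane $H\subset\p^5$, Lemma \ref{projection} gives $\pi_h(Z)\subsetneq(\p^4)^*$, so $X\cap H\subset\p^4$ is an irreducible cubic hypersurface with vanishing hessian; Theorem \ref{P4} then forces $X\cap H$ to be either a cone or projectively equivalent to the Bourgain--Sacksteder cubic. If $X\cap H$ is a cone for general $H$, then $(X\cap H)^*=\pi_h(X^*)$ is degenerate for the general $h$, which forces $X^*$ itself to be degenerate, and $X$ is a cone by Proposition \ref{prop:equivalence_definitions_of_cones}, a contradiction. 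Otherwise $X\cap H$ is projectively Bourgain--Sacksteder, so $\dim Z(V(f)\cap H)=3$ in $(\p^4)^*$ and thus $\dim Z(f)\geq 3$, forcing $\codim(Z)=2$ exactly; in this remaining subcase I would use Corollary \ref{thm:polar&perazzo}(ii), which identifies the general fibers of $\Phi_X$ as unions of $\p^2$'s through $(T_{\Phi_X(p)}Z)^*=\p^1$, and analyze the incidence of these $\p^2$'s with the hyperplane $H$ together with the constraint $(X\cap H)^*\simeq S(1,2)$ to derive the desired contradiction.

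With $\codim(Z)=1$ secured, Theorem \ref{cones} gives $\dim(Z^*)\leq 2$, while $\mu=\dim(Z^*)\geq 1$ because $X$ is not a cone (Remark \ref{rm:mu0}). To rule out $\dim(Z^*)=2$, I use the extremal case of the proof of Theorem \ref{cones}: for general $r\in Z^*$ one finds that $X_r=X\cap\p^3_r$ is a cubic cone in $\p^3_r$ with vertex $T_rZ^*\cong\p^2$, forcing the scheme-theoretic equality $X_r=3\,T_rZ^*$, a triple plane. Invoking the Reciprocity Law of Polarity in the form \eqref{multrec}, I plan to upgrade this triple-plane intersection to $\mult_p(X)=3$ at general $p\in T_rZ^*$, placing $T_rZ^*\subseteq\Vert(X)$ and contradicting $X$ not being a cone, so that $\dim(Z^*)=1$. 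At this point Theorem \ref{mu = 1} with $N=5$ concludes both parts of the statement.

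The main obstacle is the very last step of the argument sketched above: the equality $X\cap\p^3_r=3\,T_rZ^*$ encodes a tangential or non-reducedness condition of $X$ along $T_rZ^*$ but not automatically a pointwise multiplicity-three condition, and separating the ``multiplicity of $X$ at $p$'' contribution from the ``order of tangency of $\p^3_r$ to $X$ at $p$'' contribution requires a careful local analysis at points of $T_rZ^*\setminus Z^*$, where \emph{a priori} $X$ can still be smooth. I expect handling this will require combining Corollary \ref{thm:polar&perazzo}(iii), which already places $Z^*\subseteq\Sing X$, with the Gordan--Noether Identity (Theorem \ref{partialdiff}) to force higher-order singular behaviour along the tangent variety $T(Z^*)$.
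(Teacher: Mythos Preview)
Your reduction to Theorem~\ref{mu = 1} is correct in spirit, and the outline for $\codim(Z)=1$ is close to the paper's, though your case~(b) (``analyze the incidence \ldots'') is left as a sketch; the paper dispatches $\codim(Z)=1$ by the same hyperplane-section trick, observing that if $X\cap H$ is not a cone then by Theorem~\ref{P4} its polar image is a quadric \emph{hypersurface}, which already forces $\dim Z\ge 4$.

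The genuine gap is exactly the one you flag in your last paragraph, and it is not easily repaired along the lines you suggest. The equality $X\cap\p^3_r=3\,T_rZ^*$ only says that $f$ restricted to $\p^3_r$ is the cube of a linear form; this gives no information on $\mult_p(X)$ for $p\in T_rZ^*\setminus Z^*$, because the triple intersection can come entirely from tangency of $\p^3_r$ to $X$ rather than from singularity of $X$. Neither \eqref{multrec} nor the Gordan--Noether identity will promote this to $\mult_p(X)=3$.

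The paper avoids this difficulty with a much shorter argument. Two general fibres $\p^{3}_{w_1},\p^{3}_{w_2}$ of the Perazzo map in $\p^5$ must meet in a $\p^2$ or a $\p^1$. If in a $\p^2$, Lemma~\ref{sistema especial de Perazzo} makes $X$ Special Perazzo; if in a $\p^1$, they span $\p^5$ and Proposition~\ref{geramPN} applies. In either case one concludes $\langle Z^*\rangle\subseteq\Sing X$ (via Proposition~\ref{prop:lema1,2} or directly). Since $\codim(Z)=1$ the hypersurface $Z$ is non-linear, hence $Z^*$ is a non-linear surface and $\dim\langle Z^*\rangle\ge 3$. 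Then Proposition~\ref{prop:classe1}\,(ii) with $\tau\ge 3$ and $N=5$ forces $X$ to be a cone, the desired contradiction. This bypasses entirely the multiplicity analysis you were attempting.
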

\begin{proof} Since the polar image of every cubic hypersurface with vanishing hessian in $\p^4$ is a quadric hypersurface,
reasoning as in the proof of Theorem \ref{P4}, we deduce $\codim(Z)=1$. Therefore $\dim(Z^*)\leq 2$ by \eqref{Perazzoestimate}.
If $\dim(Z^*)=2$, then either $X$ is a Special Perazzo Cubic Hypersurface or  two general fibers of the Perazzo map span $\p^5$. In both cases
we would deduce $<Z^*>\subset\Sing(X)$ and $X$ would be a cone by Proposition \ref{prop:classe1}. Thus $\dim(Z^*)=1$ and we can apply Theorem \ref{mu = 1} to conclude.
\end{proof}

\subsection{Cubics with vanishing hessian in $\P^6$}

Let us remark that  since the polar image of every cubic hypersurface in $\p^5$ with vanishing hessian, not a cone, is a quadric hypersurface we deduce that $\codim(Z)=1$ for
each cubic hypersurface in $\p^6$ with vanishing hessian. Therefore \eqref{Perazzoestimate} yields $\dim(Z^*)\leq 2$.

\subsubsection{The case $\dim(Z^*)=1$} 

\begin{thm}\label{P6_1}
 Let $X \subset \P^6$ be an irreducible  cubic hypersurface with vanishing hessian, not a cone, and such that $\dim(Z^*)=1$.
 Then  $X$ is a Special Perazzo Cubic Hypersurface projectively equivalent to
 $V( x_0x_5^2 + 2x_1x_5x_6 + x_2x_6^2 + D(x_3,x_4,x_5,x_6),$
  where $D$ is a cubic form in the variables $x_3$, $x_4$, $x_5$, and $x_6$.

Furthermore the surface $V(D, x_0,x_1,x_2) \subset V(x_0, x_1, x_2 )$ 
is not singular along the line $T=V(x_0,x_1,x_2,x_5,x_6)$.
\end{thm}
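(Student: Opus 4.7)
My plan is to reduce the first assertion to a direct invocation of Theorem \ref{mu = 1}, and then to prove the non-singularity statement by contradiction, showing that the alleged singularity forces a coordinate change that exhibits $X$ as a cone. The main obstacle, although a mild one, is the second part: one must translate the geometric condition on the surface $V(D,x_0,x_1,x_2)$ into the algebraic condition $D \in (x_5,x_6)^2$ and check that this allows $D$ to be absorbed cleanly into the leading term of $f$.

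For the first part, as observed in the paragraph preceding the theorem, any cubic hypersurface in $\p^6$ with vanishing hessian (not a cone) satisfies $\codim(Z)=1$, and by the Perazzo estimate \eqref{Perazzoestimate} we have $\dim(Z^*)\leq 2$. Under our hypothesis $\dim(Z^*)=1$, Theorem \ref{mu = 1} applies directly with $N=6$: it gives that $X$ is a Special Perazzo Cubic Hypersurface, that $Z^*$ is a conic, and that $X$ is projectively equivalent to $V(x_0 x_5^2 + 2x_1 x_5 x_6 + x_2 x_6^2 + D(x_3,x_4,x_5,x_6))$.

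For the second part, I would argue by contradiction. Assume the cubic surface $V(D,x_0,x_1,x_2)\subset\p^3_{x_3,x_4,x_5,x_6}$ is singular along $T=\{x_5=x_6=0\}$. This means all four partials $\partial D/\partial x_i$, $i=3,4,5,6$, vanish on $T$. The conditions $\partial D/\partial x_3|_T=\partial D/\partial x_4|_T=0$ kill every monomial of $D$ of pure $(x_3,x_4)$-degree three, while $\partial D/\partial x_5|_T=\partial D/\partial x_6|_T=0$ kill all monomials of $D$ whose $(x_3,x_4)$-degree equals two. Hence every monomial of $D$ has $(x_5,x_6)$-degree at least two, i.e. $D\in(x_5,x_6)^2$.

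I can then write $D = x_5^2 A + x_5 x_6 B + x_6^2 C$ with $A,B,C\in\mathbb K[x_3,x_4,x_5,x_6]_1$. Substituting gives
\[
f \;=\; (x_0+A)\,x_5^2 \;+\; (2x_1+B)\,x_5 x_6 \;+\; (x_2+C)\,x_6^2.
\]
Since $A$, $B$, $C$ involve only the variables $x_3,x_4,x_5,x_6$, the assignments $x_0'=x_0+A$, $2x_1'=2x_1+B$, $x_2'=x_2+C$ (with the other coordinates unchanged) define a linear change of coordinates on $\p^6$. After this transformation $f$ is a polynomial in the five variables $x_0',x_1',x_2',x_5,x_6$, so by Proposition \ref{prop:equivalence_definitions_of_cones} the hypersurface $X$ is a cone, contradicting the hypothesis. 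This contradiction shows $V(D,x_0,x_1,x_2)$ cannot be singular along $T$, completing the proof.
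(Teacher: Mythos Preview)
Your proof is correct and follows essentially the same approach as the paper's. The paper's own proof is a one-liner: it invokes Theorem~\ref{mu = 1} for the canonical form and then asserts that the non-singularity condition on $V(D)$ along $T$ ``is necessary for $X$ not to be a cone as one verifies by computing the derivatives of the general canonical form.'' Your contradiction argument---showing that singularity along $T$ forces $D\in(x_5,x_6)^2$ and then absorbing $D$ into the leading quadratic terms via a linear change of coordinates---is exactly the computation the paper leaves to the reader, carried out cleanly.
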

\begin{proof} All the conclusions follow from Theorem \ref{mu = 1} except the condition of non singularity of $V(D)$
along $T$ which is necessary for $X$ not to be a cone as one verifies by computing the derivatives of the general 
canonical form.
\end{proof}
\medskip

\begin{rmk}\label{Y6_1}
  {\rm
Using the  canonical form in Theorem \ref{mu = 1}   a direct computation shows that there are only three possibilities for $(\operatorname{Sing}X)_{\red}$ in Theorem \ref{P6_1}.

If $T \cap \operatorname{Sing} V(D) = \emptyset$, then $(\Sing X)_{\red} = \P^2$. If 
$T \cap \operatorname{Sing}V(D) \neq \emptyset$, then either the intersection consists of two points
and $(\Sing X)_{\red} = \P^3_1 \cup \P^3_2$; or it consists of a point and $(\Sing X)_{\red} = \P^3$.}
\end{rmk}

By analyzing only the canonical form Perazzo did not distinguish the three previous cases. The next result describes  the different geometry of $X$ occurring  in the three distinct 
cases. 

\begin{thm}\label{P6_1_1}
  Let $X \subset \P^6$ be a cubic hypersurface having vanishing hessian, not a cone,  with
 $\dim(Z^*) = 1$. Then $X $ is a Special Perazzo Cubic Hypersurface which is a scroll in $\p^2$ tangent to the conic $Z^*$.

 Moreover if $<Z^*>\subsetneq (\Sing X)_{\red}$
 then one of the following conditions holds:
  \begin{enumerate}
    
    \item[i)] If $(\Sing X)_{\red} = \P^3$, then $X$ is also a  scroll in $\p^3$ tangent to the conic $Z^*$.
 \medskip
    
 \item[ii)] If $(\Sing X)_{\red} = \P^3_1 \cup \P^3_2$, then there exist two different structures of scroll
 in  $\P^3$ tangent to $Z^*$ such that the members of the two structures  passing through a general
point  $x \in X$ intersect in a $\P^2_x$ which is tangent to $Z^*$, yielding the original structure of scroll
in $\p^2$ tangent to $Z^*$.
  \end{enumerate}
\end{thm}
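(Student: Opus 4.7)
The plan is to exhibit, through a general point $p\in X$, explicit linear subspaces contained in $X$, using the canonical form from Theorem \ref{P6_1}, the inclusion $<Z^*>\subseteq\Sing X$ from Proposition \ref{prop:lema1,2}, and the Reciprocity Law (Proposition \ref{reciprocity}).

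First I would establish the scroll in $\p^2$ structure. Set $r_p:=\Sing Q_p\in Z^*$. Using the canonical form of Theorem \ref{P6_1}, a direct computation shows that for a point $q=(q_0:q_1:q_2:0:\ldots:0)\in <Z^*>$, the condition $q\in H_p$ is a single linear equation in $q_0,q_1,q_2$ whose coefficients are $f_0(p),f_1(p),f_2(p)$, and these coefficients are exactly the gradient of the defining equation of the conic $Z^*$ at the point $r_p$. In other words, $H_p\cap <Z^*>=T_{r_p}Z^*$. For each $q\in T_{r_p}Z^*$ one has $q\in\Sing X$ and $q\in H_p$, so the trilinear expansion
\[
f(\alpha p+\beta q)=\alpha^3 f(p)+3\alpha^2\beta f(p,p,q)+3\alpha\beta^2 f(p,q,q)+\beta^3 f(q)
\]
vanishes identically ($f(p)=f(q)=0$ and $f(p,q,q)=0$ from $q\in\Sing X$, while $f(p,p,q)=\tfrac{1}{3}\sum q_i f_i(p)=0$ from $q\in H_p$). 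Hence $\p^2_p:=<p,T_{r_p}Z^*>\subseteq X$ and is tangent to $Z^*$ at $r_p$, so $X$ is a scroll in $\p^2$ tangent to the conic $Z^*$.

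For case (i), $(\Sing X)_{\red}=\p^3\supsetneq <Z^*>$, I consider $\p^3_p:=<p,\p^3\cap H_p>$, which has dimension three for general $p$. To show $\p^3_p\subseteq X$ I expand $f(\alpha p+\beta s_1+\gamma s_2)$ for $s_1,s_2\in\p^3\cap H_p$. The terms involving only the $s_i$'s vanish since $s_i\in\Sing X$; the terms $f(p,p,s_i)=\tfrac{1}{3}\sum (s_i)_jf_j(p)$ vanish since $s_i\in H_p$. The nontrivial input is that $(\Sing X)_{\red}\subseteq Q_p$ by the Reciprocity Law, which gives $f(p,s_i,s_i)=\tfrac{1}{6}Q_p(s_i)=0$; polarising this identity on the linear space $\p^3$ yields $f(p,s_1,s_2)=0$ for all $s_1,s_2\in\p^3$. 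Every term in the trilinear expansion therefore vanishes, so $\p^3_p\subseteq X$. Finally $T_{r_p}Z^*=<Z^*>\cap H_p\subseteq\p^3\cap H_p$, so $\p^3_p\supseteq\p^2_p$ is tangent to $Z^*$ at $r_p$, yielding the scroll structure in $\p^3$.

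For case (ii), $(\Sing X)_{\red}=\p^3_1\cup\p^3_2$, the construction of case (i) applied to each $\p^3_i$ produces two distinct $\p^3$-scrolls $\p^3_{i,p}:=<p,\p^3_i\cap H_p>\subseteq X$, both tangent to $Z^*$. Since $\p^3_1$ and $\p^3_2$ are distinct three-spaces of $\p^6$ each containing $<Z^*>=\p^2$, one has $\p^3_1\cap\p^3_2= <Z^*>$; consequently
\[
\p^3_{1,p}\cap\p^3_{2,p}\supseteq <p,<Z^*>\cap H_p>=<p,T_{r_p}Z^*>=\p^2_p.
\]
A dimension count (each $\p^3_i\cap H_p$ is a plane, meeting in the line $<Z^*>\cap H_p=T_{r_p}Z^*$, so they span a $\p^3$, which together with $p$ spans a $\p^4$) shows that $\p^3_{1,p}\cap\p^3_{2,p}$ has dimension exactly two, forcing equality with $\p^2_p$. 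The trickiest step will be the polarisation argument upgrading the vanishing of $Q_p$ on $\p^3$ to the vanishing of the symmetric bilinear form associated to $\Hess_f(p)$ on $\p^3\times\p^3$; this is what glues together the individual lines $<p,s>\subseteq X$ into genuine linear three-dimensional subspaces of $X$.
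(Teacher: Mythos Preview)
Your proof is correct. The polarisation step is fine: once $\P^3\subseteq Q_p$ you have that the quadratic form $\Hess_f(p)$ vanishes identically on the affine cone over $\P^3$, so in characteristic zero its associated symmetric bilinear form vanishes on that cone as well, giving $f(p,s_1,s_2)=0$ for all $s_1,s_2\in\P^3$. The identification $H_p\cap\langle Z^*\rangle=T_{r_p}Z^*$ also checks out in the canonical coordinates of Theorem~\ref{P6_1}, since $r_p=\mathcal P_X(p)=(f_2(p):-f_1(p):f_0(p):0:\ldots:0)$ and the tangent line to the conic at that point has exactly the equation $f_0(p)x_0+f_1(p)x_1+f_2(p)x_2=0$.

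The paper argues differently and more geometrically. Rather than building $\P^3_p$ by hand, it observes that the $\P^4$'s through a fixed $\P^3\subseteq\Sing X$ cut $X$ residually: if $\P^3=V(l)$ inside such a $\P^4$, then $X\cap\P^4$ is a cubic singular along $V(l)$, hence equal to $2\P^3+\P^3'$ for some residual $\P^3'$. Varying the $\P^4$ over the $\P^2$ of choices yields the scroll structure immediately. The tangency to $Z^*$ is then read off from the fact that both $\P^3$-scrolls (in case ii) pass through $\mathcal P_X(x)$ with the same contact direction. Your $\P^3_p=\langle p,\P^3\cap H_p\rangle$ is in fact the same linear space as the paper's residual $\P^3'$ inside $\langle p,\P^3\rangle$: both lie in $X\cap\langle p,\P^3\rangle$, both contain $p\notin\P^3$, hence they coincide. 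So the two routes reach the same object; the paper's residual-intersection viewpoint gets $\P^3_p\subseteq X$ in one line but leaves the tangency to $Z^*$ somewhat implicit, whereas your trilinear expansion is longer but makes the tangency at $r_p=\mathcal P_X(p)$ completely explicit via the identity $H_p\cap\langle Z^*\rangle=T_{r_p}Z^*$.
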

\begin{proof}  By Theorem \ref{mu = 1} we know that  $X$ has
a structure of scroll in $\p^2$ tangent to the conic $Z^*$.

By Remark \ref{Y6_1} if $<Z^*>\subsetneq(\Sing X)_{\red}$, then either $(\Sing X)_{\red} = \P^3$
or $(\Sing X)_{\red} = \P^3_1 \cup \P^3_2$. In both cases
the $\P^4$'s passing through a $\P^3$ contained in $\Sing X$ induce on $X$ a structure of scroll in $\p^3$ tangent to
$Z^*$. Moreover   in case ii) the $\p^3$'s of each scroll structure passing through a general $x\in X$ are tangent
to $Z^*$ at the same point $\mathcal P_X(x)$ so that their intersection $\p^2_x$ is also tangent to $Z^*$ at $\mathcal P_X(x)$. By construction this structure coincides with the one described at the beginning of the proof.
\end{proof}

\subsubsection{The case $\dim(Z^*)=2$}

\begin{thm}\label{P6_2}
  Let $X\subset \P^6$ be a cubic hypersurface with vanishing hessian, not a cone,  with $\dim(Z^*)=2$. Then
 \begin{enumerate}
\item[i)]  $X$ is a Special Perazzo Cubic Hypersurface which  is a scroll in $\P^3$ tangent to the surface $Z^*$, which is either a quadric
  or a cubic surface in $\p^3$;
\item[ii)] $X$ is projectively equivalent to 
$$V(\sum_{i=0}^3x_iC^i(x_4,x_5,x_6)+D(x_4,x_5,x_6)),$$ 
where $C^0$, $C^1$, $C^2$, and $C^3$ are quadratic forms  and $D$ is a cubic form;
\item[iii)] $X$ is the dual variety
of a scroll in $\P^2$ over a surface $\widetilde Z\subset \p^3$ which is projectively equivalent to the dual of $Z^*$ in its linear span.
Moreover $X^*$ cuts a general generator of $Z$, which is a cone over $\widetilde Z$  with vertex $<Z^*>^*=\p^2$, along a
plane of the ruling.
\end{enumerate}
\end{thm}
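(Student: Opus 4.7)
The plan is to establish the Special Perazzo structure first, invoke Theorem \ref{analise_sistema_linear} for the canonical form, and then unpack the scroll and duality statements.

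Recall from the preamble to this subsection that $\codim(Z)=1$, so the Perazzo fibers $\p^{N-\mu}_w$ are $\p^4$'s in $\p^6$. Two general fibers $\p^4_{w_1},\p^4_{w_2}$ either span $\p^5$ (meeting in a $\p^3=\p^{N-\mu-1}$) or span $\p^6$ (meeting in a $\p^2$). The second alternative can be ruled out: combining Proposition \ref{geramPN} with the equivalence (iv)$\Leftrightarrow$(iii) in Proposition \ref{prop:lema1,2} would give $<Z^*>\subseteq \p^4_{w_1}\cap\p^4_{w_2}=\p^2$, forcing $Z^*$ to be a plane; then reflexivity $Z^{**}=Z$ would imply $Z=(\p^2)^*=\p^3\subset(\p^6)^*$, contradicting $\dim Z=5$. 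Hence two general fibers share a $\p^3$ and Lemma \ref{sistema especial de Perazzo} identifies $X$ as a Special Perazzo Cubic Hypersurface. Theorem \ref{analise_sistema_linear} applied with $\sigma=\dim(Z^*)+1=3$ then produces the canonical form of (ii), while Theorem \ref{Z dual determinantal} together with the non-linearity of $Z^*$ forces $\deg(Z^*)\in\{2,3\}$ inside $<Z^*>=\p^3$.

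The remaining scroll statement in (i) follows from $<Z^*>=\p^3\subseteq\Sing X$ lying inside every Perazzo fiber $\p^4_w$: a cubic in $\p^4$ singular along a $\p^3$ factors as $\ell_0^2\ell_1$, so $X\cap\p^4_w=2<Z^*>+H_w$ with $H_w\cong \p^3$ residual, and as $w$ varies the $H_w$'s sweep out $X$. A short direct calculation from \eqref{canSPCH} shows that the $\p^2=H_w\cap<Z^*>$ is a tangent plane to $Z^*$, justifying the tangency assertion.

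For (iii) I would compute $Z$ directly from the canonical form $f=\sum_{i=0}^3 x_iC^i(x_4,x_5,x_6)+D(x_4,x_5,x_6)$. With $(x_4,x_5,x_6)$ fixed and $(x_0,\ldots,x_3)$ varying, the first four coordinates of $\Phi_X$ are frozen at $(C^0:\cdots:C^3)$ while the last three sweep a $\mathbb K^3$; projectively the closure is a $\p^3$ generated by the point $[C^0:C^1:C^2:C^3:0:0:0]$ and the $\p^2=V(y_0,y_1,y_2,y_3)=<Z^*>^*$. As $(x_4:x_5:x_6)\in\p^2$ varies, these points trace a surface $\widetilde Z\subset\p^3=V(y_4,y_5,y_6)$, so $Z$ is the cone over $\widetilde Z$ with vertex $<Z^*>^*$. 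Matching $(C^0,\ldots,C^3)$ with the gradient of the determinantal equation of $Z^*$ arising in the proof of Theorem \ref{Z dual determinantal} identifies $\widetilde Z$ with the dual of $Z^*$ inside $<Z^*>$. Finally $\Phi_X|_{H_w}$ has image a $\p^2$ contained in the generator of $Z$ above the corresponding point of $\widetilde Z$, exhibiting $X^*$ as a scroll in $\p^2$ over $\widetilde Z$ cutting each generator of $Z$ along one plane of the ruling. The main technical obstacle is keeping careful track of the three dual spaces and verifying the identification $\widetilde Z=Z^{*\vee}$ rigorously; both ultimately reduce to explicit matrix computations with $A=[\sum_j C^j_{k,i}x_j]$ on the canonical form.
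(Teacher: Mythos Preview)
Your argument for parts (i) and (ii) follows the paper's proof closely: dichotomy on the span of two general Perazzo fibers, exclusion of the second case via Proposition~\ref{geramPN} and Proposition~\ref{prop:lema1,2}, then Lemma~\ref{sistema especial de Perazzo}, Theorem~\ref{analise_sistema_linear} and Theorem~\ref{Z dual determinantal}. The only difference is the final contradiction in the excluded case. The paper observes that $\langle Z^*\rangle=\p^3$ (it cannot be larger by Proposition~\ref{prop:classe1}(ii), nor can $Z^*$ be linear since $\mu\geq 1$), and then Proposition~\ref{prop:lema1,2} forces this $\p^3$ into the $\p^2$ intersection of two general fibers. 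You instead push $\langle Z^*\rangle$ into that $\p^2$ first, conclude $Z^*=\p^2$, and derive $\dim Z=3\neq 5$ from biduality. Both routes are valid; yours avoids invoking Proposition~\ref{prop:classe1}(ii).

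Where you go beyond the paper is in actually arguing the scroll structure in (i) and all of (iii): the paper's proof stops after establishing the Special Perazzo property, the degree bound, and the canonical form, and does not address the tangency of the scroll to $Z^*$ nor any of (iii). Your sketch for the scroll (factoring $X\cap\p^4_w$ as $2\langle Z^*\rangle+H_w$) and for (iii) (reading off $Z$ as a cone over $\widetilde Z$ from the canonical form and identifying $\widetilde Z$ with the dual of $Z^*$ via the matrix $A$ of Theorem~\ref{Z dual determinantal}) is sound in outline, though the identification $\widetilde Z\simeq (Z^*)^\vee$ and the tangency claim are the places where a referee would want the computation written out rather than asserted.
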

\begin{proof} 
Let $\P^4_{r_i}=\overline{\mathcal P_X^{-1}(r_i)}$, $i=1,2$, be two general fibers of the Perazzo map of $X\subset\p^6$. There are two possibilities:
either $<\P^4_{r_1},\p^4_{r_2}>=\p^5$ (or equivalently $\P^4_{r_1}\cap\p^4_{r_2}=\p^3$)
or $<\P^4_{r_1},\p^4_{r_2}>=\p^6$ (or equivalently $\P^4_{r_1}\cap\p^4_{r_2}=\p^2$). 

In the first case $X$ is a Special Perazzo Cubic Hypersurface by Lemma \ref{sistema especial de Perazzo}.
In the second case Proposition \ref{geramPN} implies $<Z^*>=\p^3\subseteq\Sing X$ but this case 
cannot exist 
by Proposition \ref{prop:lema1,2}. Theorem \ref{Z dual determinantal} yields $\deg(Z^*)\leq 3$, concluding the proof
of i) while part ii) follows from Theorem \ref{analise_sistema_linear}.
\end{proof}

\section{Examples  in higher dimensions}\label{N7}
The classification of cubic hypersurfaces 
with vanishing hessian $X\subset\p^N$ for $N\leq 6$ showed that they all
have  $\codim(Z)=1$  and that they are  Special Perazzo Cubic Hypersurfaces.

In \cite[\S 2]{CRS} two  series of examples of hypersurfaces with vanishing hessian $X\subset\p^N$, $N\geq 5$, of sufficiently high  degree  and such that $\codim(Z)>1$ have been constructed, see \cite[Proposition 2.15, Remark 2.16, Remark 2.19]{CRS}.

We now construct  
some examples of cubic hypersurfaces in $\p^N$, $N\geq 7$, with  $\codim(Z)>1$. Later on we shall provide also
examples of cubic hypersurfaces with vanishing hessian not a cone in $\p^N$ with $N=7, 13, 25$
which are not  Special Perazzo Cubic Hypersurfaces.
\medskip

First we give two different constructions of cubic hypersurfaces with vanishing hessian, not cones,  from 
examples in lower dimension dubbed  {\it concatenation} and {\it juxtaposition}. 
By repeated use of these methods one can prove that for a fixed $\alpha\in \mathbb N$
there exists $N_0=N_0(\alpha)$ such that for every $N\geq N_0$ there exists  a cubic hypersurface $X\subset\p^N$
with vanishing hessian, not a cone, and such that $\codim(Z)=\alpha$. 
\medskip

\begin{ex}{\rm Given  $g=x_0x_3^2+x_1x_3x_4+x_2x_4^2$ let  $\tilde{g} = x_2x_4^2+x_5x_4x_7+x_6x_7^2$. 
We define the concatenation of $g$ and $\tilde{g}$ as the form $f$ obtained by summing up the monomials without 
repetition, that is  
$$f=x_0x_3^2+x_1x_3x_4+x_2x_4^2+x_5x_4x_7+x_6x_7^2.$$

Then $V(f)\subset\p^7$ is a cubic hypersurface with vanishing hessian,
not a cone. Indeed, letting $f_i=\frac{\partial f}{\partial x_i}$ one easily verifies
that the $f_i$'s are linearly independent so that $X$ is not a cone and $Z$ is non degenerate.
Moreover
$$f_0f_2=f_1^2\;\;\;\mbox{ and }\;\;\;f_2f_6=f_5^2,$$
 yielding $\codim(Z)\geq 2$ and $\deg(Z)\geq 3$. Thus $\codim(Z)=2$ since the polar
image of every cubic hypersurface with vanishing hessian in $\p^5$, not a cone, is a quadric hypersurface. }
\end{ex}
\medskip

\begin{ex}{\rm Let $g=x_0x_3^2+x_1x_3x_4+x_2x_4^2$ and let 
$$f=x_0x_3^2+x_1x_3x_4+x_2x_4^2+x_5x_8^2+x_6x_7x_8+x_7x_9^2$$
be the cubic form obtained by the juxtaposition of $g$ with itself, that is $f$ is the sum of $g$ and 
$\tilde{g} = x_5x_8^2+x_6x_7x_8+x_7x_9^2$ obtained by shifting the indexes of $g$.

As above it is possible to verify  immediately that $X=V(f)\subset\p^9$ is not a cone and that the following algebraic
relations hold $f_0f_2=f_1^2,$ $f_5f_7=f_6^2.$ 
}
\end{ex}
\medskip

\begin{ex}{\rm Let $g=x_0x_3^2+x_1x_3x_4+x_2x_4^2$ and let 
$$f=x_0x_3^2+x_1x_3x_4+x_2x_4^2+x_5x_8^2+x_6x_7x_8+x_7x_9^2+x_{1-}x_{13}^2+x_{11}x_{13}x_{14}+x_{12}x_{14}^2$$
be the cubic form obtained by the juxtaposition of $g$ with itself two times. 

Then $X=V(f)\subset\p^{12}$ is not a cone and the following algebraic relations hold: $f_0f_2=f_1^2,$ $f_5f_7=f_6^2$,
$f_{10}f_{12}=f_{11}^2.$ 
}
\end{ex}
\medskip

We conclude with an   example of a  cubic hypersurface $X\subset\p^7$ with vanishing hessian, not a cone, which
is not a Special Perazzo Cubic Hypersurface. 
\medskip

\begin{ex}{\rm Let
\begin{equation}\label{p2p2}
g=\det
\left(
\begin{array}{ccc}
x_0&x_1&x_2\\
x_3&x_4&x_5\\
x_6&x_7&x_8
\end{array}
\right).
\end{equation}
Then $R=V(g)\subset\p^8$ is a cubic hypersurface. If we identify $\p^8$ with $\p(\mathbb M_{3\times 3}(\mathbb K))$, then $R$ is the locus
of matrices of rank at most two and it is naturally identified with the secant variety of $W=\p^2\times\p^2\subset\p^8$,
the locus of matrices of rank 1. Moreover $\Sing R=W$ as schemes.

The polar map $\Phi_R:\p^8\dasharrow\p^8$ is a birational involution sending a matrix $p=[A]$ to its cofactor matrix. Since the cofactor matrix of a rank two 
matrix has rank one and since $\Phi_{R|R}=\mathcal G_R$, we deduce that $\overline{\Phi_R(R)}=\overline{\mathcal G_R(R)}=R^*$ is naturally identified with $W$. By the previous description $\Phi_R$ is an isomorphism on $\p^8\setminus R$ and the closure
of every positive dimensional fiber of $\Phi_R$ (and hence of every fiber of $\mathcal G_R$)  is a $\p^3$. Indeed by homogeneity it is sufficient to verify
this for $\overline{\Phi_R^{-1}(q)}$ with $q=(0:0:0:0:0:0:0:0:1).$
The  $3\times 3$ matrices mapped by $\Phi_R$ to $q$ are exactly the rank  two matrices $X=[x_{i,j}]$ having 
$x_2=x_3=x_5=x_6=x_7=x_8=0$,
i.e. $\overline{\Phi_R^{-1}(q)}=V(x_2,x_5,x_6, x_7,x_8)$  is the closure of the orbit of 
\begin{equation}\label{pR}
p=
\left(
\begin{array}{ccc}
1&0&0\\
0&1&0\\
0&0&0
\end{array}
\right)
\end{equation}
under the natural action.

Let 
$$f=\det
\left(
\begin{array}{ccc}
x_0&x_1&x_2\\
x_3&x_4&x_5\\
x_6&x_7&0
\end{array}
\right).$$

Then $x_8=0$ is the equation of $T_pR$ with $p\in R$ the  point defined in \eqref{pR}.

Let
$$X=R\cap T_pR=V(f)\subset\p^7=V(x_8)\subset\p^8.$$
Then $X\subset\p^7$ is a cubic hypersurface with vanishing hessian, not a cone. Indeed the partial derivatives $f_i$'s are linearly
independent and we have
$$f_0f_4=x_2x_5x_6x_7=f_1f_3.$$
More precisely $Z=\overline{\Phi_X(\p^7)}=V(y_0y_4-y_1y_3)\subset(\p^7)^*,$ is
a rank four quadric with vertex $V=V(y_0,y_1,y_3,y_4)$. Thus
$$Z^*=V(x_0x_4-x_1x_3, x_2,x_5,x_6,x_7)\subset <Z^*>=V(x_2,x_5,x_6,x_7)=\p^3.$$

Hence $<Z^*>=\overline{\Phi_R^{-1}(q)}$ is the fiber of the Gauss map of $R$ passing through $p$, that is the contact locus
on $R$ of the hyperplane $T_pR$, yielding $<Z^*>\subseteq\Sing(X)$ (a fact which can be verified also directly). The variety $Z^*$ is  thus the locus of secant and tangent lines to $W$
passing through $p\in R=SW$. 

The hypersurface $X$ is singular also along $\Sing R\cap T_pR=W\cap T_pR$.
We claim that $(\Sing X)_{\red}=<Z^*>\cup Y_1\cup Y_2$, where each $Y_i\subset<Y_i>=\p^5$
is a Segre 3-fold $\p^1\times\p^2$ and where $<Y_1>\cap <Y_2>=<Z^*>$. Indeed $T_pR\cap W$ is a hyperplane
section of $W$ so that it has degree 6. Moreover  $T_pR\cap W$  contains the following two Segre 3-folds: 
$$\rk\left(
\begin{array}{ccc}
x_0&x_1&x_2\\
x_3&x_4&x_5\\
0&0&0
\end{array}
\right)=1
$$
lying  in $V(x_6,x_7)=\p^5\subset\p^7$
and
$$\rk\left(
\begin{array}{ccc}
x_0&x_1&0\\
x_3&x_4&0\\
x_6&x_7&0
\end{array}
\right)=1
$$
lying in $V(x_2,x_5)=\p^5\subset\p^7$. 

Let $\mathcal P_X:\p^7\dasharrow Z^*$ be the Perazzo map of $X$. Thus for $w\in W=Z^*$ general we have
 $\overline{\mathcal P_X^{-1}(w)}=\p^5_w$. More precisely  if 
$w \in Z^*$, $w=(a_0:a_1:0:a_3:a_4:0:0:0)$ with $a_0a_4-a_1a_3=0$, then:
$$Q_w = \left( \begin{array}{cccccccc}
  0&0&0&0&0&0&0&0 \\
  0&0&0&0&0&0&0&0 \\
  0&0&0&0&0&0& -a_4 & a_3 \\
  0&0&0&0&0&0&0&0 \\
  0&0&0&0&0&0&0&0 \\
  0&0&0&0&0&0& a_1 & -a_0 \\
  0&0& -a_4 &0&0& a_1 &0&0 \\
  0&0& a_3 &0&0& -a_0 &0&0 \\
  \end{array} \right) $$
This matrix has rank two due to the relation $a_0a_4-a_1a_3=0$ and $\Sing Q_w=V(-a_4x_6+a_3x_7, -a_4x_2+a_1x_5)$ 
for $w$ general.
Therefore two general fibers of the Perazzo map intersect in a $\P^3$ and $X=V(f)\subset\p^7$ is
not a Special Perazzo Cubic Hypersurface.

Let us remark that  $X^*\subset Z\subset (\p^7)^*$ is a 4-fold
which by duality is  the projection of $R^*$ from the point $\mathcal G_R(p)=q=(0:0:\ldots:0:1)$.
One can alternatively  deduce   $\dim(X^*)=4$ by  first observing that $T_pR$ cuts a general fiber of the Gauss map of $R$ in a $\p^2$,
which  becomes the general fiber of the Gauss map of $X$ so that  $\dim(X^*)=\dim(X)-2=4$.

To see the vanishing of the hessian of $X$ geometrically
one  remarks that $\Phi_R(T_pR)=Q\subset\p^8$ is a quadric singular at $\Phi_R(p)$. Indeed,  the restriction
of $\Phi_R$ to $T_pR$ is birational onto the image, which is a quadric hypersurface since $\Phi_R$ is given by quadratic equations and it is an involution;  moreover the general positive
dimensional fiber of the restriction of $\Phi_R$ to $T_pR$ is two dimensional while the fiber of the Gauss map
through $p$ has dimension three. Thus projecting $Q$ from $\Phi_R(p)$ one obtains a quadric hypersurface
$Q\subset\p^7$ containing $\Phi_X(\p^7)$. Thus $X\subset\p^7$ has vanishing hessian and it is not difficult
to prove that $Z_X=Q$, also by direct computations as seen above. }
\end{ex}

\begin{rmk}\label{final}{\rm 
One can construct in a similar way  examples of cubic hypersurfaces with vanishing hessian, not cones, such that $\codim(Z)=1$
and $\codim(X^*,Z)>1$. Indeed by taking as $R$ the secant variety to one of the two Severi varieties $W=\mathbb G(1,5)\subset\p^{14}$,
respectively $W=E_6\subset\p^{26}$, and by considering their section by a tangent hyperplane
one gets examples of cubic hypersurfaces $X\subset\p^N$ with $N=13$, respectively  $N=25$, such that $Z_X$ is a quadric hypersurface
while $X^*$ has dimension 8, respectively 16. Thus in the first case $\codim(X^*,Z)=4$
while in the second case $\codim(X^*,Z)=8$.

Let us recall that $\dim(Z)=\rk\Hess_X-1$ while it is not difficult to prove that $\dim(X^*)=\rk_f\Hess_X-2$, where
$\rk_f\Hess_X$ denotes the rank of the matrix $\Hess_X$ modulo the ideal $(f)$.

For the cubic hypersurfaces $X=V(f)=SW\subset\p^{\frac{3n}{2}+1}$, $n=4, 8, 16$, deduced from  the  corresponding Severi varieties $W^n\subset\p^{\frac{3n}{2}+2}$, we have  $\rk \Hess_X=\frac{3n}{2}+1$ and $\rk_f\Hess_X=n+2$ since $\codim(X^*, Z)=\frac{n}{2}$.
Thus for $n=4, 8, 16$ we have $\rk\Hess_X>\rk_f\Hess_X$, something which is somehow unexpected and which can occur
only for non Special Cubic Perazzo Hypersurfaces.

In general it seems quite difficult to construct examples of (cubic) hypersurfaces with vanishing hessian, not cones, such that $\codim(X^*, Z)$
can be arbitrarily large. 
We plan to come back to this intriguing problem elsewhere.
}
\end{rmk}
\medskip

\section*{Acknowledgements}
Our collaboration  began several years ago in Recife while we were trying
to understand, revisit and generalize the ideas contained in 
\cite{Pe}. The first author was  supported by CAPES and FACEPE while the second author  by CNPq, by
the University of Catania and by PRIN "Geometria delle Variet\`a Algebriche".
We would like to thank  Ciro Ciliberto, Thiago Fassarella, Riccardo Re and Aron Simis for many interesting discussions on these topics along the years.

We are deeply indebted to the referee who read very carefully  the whole paper providing  many important remarks and suggestions which 
allowed  us to improve significantly the exposition and to eliminate a lot of imprecisions, both mathematical and lexical. 
Due to his(/her) contributions our original rough manuscript turned into a mathematical paper.


\begin{thebibliography}{Alzati}


\bibitem[AG1]{AG} M. A. Akivis, V. V. Goldberg, {\it Differential Geometry of Varieties with Degenerate Gauss Maps}, CMS books in Mathematics, 2004.
\bibitem[AG2]{AG2} M. A. Akivis, V. V. Goldberg, {\it Smooth Lines on Projective Planes over
Two-Dimensional Algebras and Submanifolds with Degenerate Gauss Maps } Beitr\" age zur Algebra und Geometrie
"Contributions to Algebra and Geometry",  Volume 44 (2003), No. 1, 165-178.

\bibitem[AGL]{AGL} M. A. Akivis, V. V. Goldberg, J. M. Landsberg {\it On the structure of varieties  with degenerate Gauss mappings}, arXiv:math-9908079v2.

\bibitem[Al]{Alzati} A. Alzati, {\it Special linear systems and syzygies}, Collect. Math. 59 (2008), 239--254.


\bibitem[Ci]{Ciliberto} C. Ciliberto, {\it  Ipersuperficie algebriche a punti parabolici
e relative hessiane}, Rend. Acc. Naz. Scienze 98 (1979-80),
25--42.
\bibitem[CRS]{CRS} C. Ciliberto, F. Russo, A. Simis, 
{\it  Homaloidal hypersurfaces and hypersurfaces with vanishing Hessian}, Advances in Mathematics, 
 218, (2008), 1759-1805.

\bibitem[De]{Degoli} L. Degoli, {\it Sui sistemi lineari di quadriche riducibili ed irriducibili a Jacobiana identicamente nulla,} Collect. Math. 35 (1984), 131--148.

\bibitem[Do]{Do1} I. Dolgachev, {\it Classical Algebraic Geometry - A modern view}, Cambridge University  Press, 2012.
\bibitem[Fr1]{Fr1} A. Franchetta, {\it Forme algebriche
sviluppabili e relative hessiane}, Atti Acc. Lincei 10 (1951),
1--4.
\bibitem[Fr2]{Fr2} A. Franchetta, {\it Sulle forme algebriche
di $S_4$ aventi hessiana indeterminata}, Rend. Mat. 13 (1954),
1--6.
\bibitem[FP]{FP} G. Fischer, J. Piontkowski, {\it Ruled varieties--
An introduction to algebraic differential geometry},  Advanced Lectures in Mathematics. Friedr. Vieweg \& Sohn, Braunschweig, 2001. x+142 pp.

\bibitem[FW]{FW} G. Fischer, H. Wu, {\it Developable complex
analytic submanifolds}, Inter. Jour. Math. 6 (1995), 229--272.

\bibitem[GR]{GR} A. Garbagnati, F. Repetto, {\it A geometrical approach to
Gordan--Noether's and Franchetta's contributions to a question posed by Hesse}, Collect. Math., 
60 (2009), 27--41.
\bibitem[GN]{GN} P. Gordan, M. N\" other,
{\it Ueber die algebraischen Formen, deren Hesse'sche Determinante
identisch verschwindet}, Math. Ann. 10 (1876), 547--568.
\bibitem[He1]{Hesse1} O. Hesse, {\it \" Uber die Bedingung, unter
welche eine homogene ganze Function von $n$ unabh\' angigen
Variabeln durch Line\" are Substitutionen von $n$ andern unabh\'
angigenVariabeln auf eine homogene Function sich zur\" uck-f\"
uhren l\" asst, die eine Variable weniger enth\" alt}, J. reine
angew. Math. 42 (1851), 117--124.
\bibitem[He2]{Hesse2} O. Hesse, {\it
Zur Theorie der ganzen homogenen Functionen}, J. reine angew.
Math. 56 (1859), 263--269.

\bibitem[La]{Landsberg} J.M. Landsberg, {\it On degenerate secant and tangential varieties and local differential geometry}, Duke Math. J. 85 (1996), 605--634.
\bibitem[Lo]{Lo} C. Lossen, {\it When does the Hessian determinant vanish identically? (On Gordan and Noether's Proof of Hesse's Claim)}, Bull. Braz.
Math. Soc. 35 (2004), 71--82.
\bibitem[Pe]{Pe} U. Perazzo, {\it Sulle variet\` a cubiche la
cui hessiana svanisce identicamente}, Giornale di Matematiche (Battaglini)  38  (1900),
337--354.
\bibitem[Pt1]{Pt1} R. Permutti, {\it Su certe forme a
hessiana indeterminata}, Ricerche di Mat. 6 (1957), 3--10.
\bibitem[Pt2]{Pt2} R. Permutti, {\it Sul teorema di Hesse
per forme sopra un campo a caratteristica arbitraria}, Le
Matematiche 13 (1963), 115--128.
\bibitem[Pt3]{Pt3} R. Permutti, {\it Su certe classi di forme a
hessiana indeterminata}, Ricerche di Mat. 13 (1964), 97--105.
\bibitem[Se1]{Segre} B. Segre, {\it Bertini forms and hessian
matrices}, J. London Math. Soc. 26 (1951), 164--176.
\bibitem[Se2]{Segrediff} B. Segre, {\it Some Properties of
Differentiable Varieties and Transformations}, Erg. Math.,
Springer-Verlag, 1957.
\bibitem[Se3]{Segre2} B. Segre, {\it Sull' hessiano di taluni
polinomi (determinanti, pfaffiani, discriminanti, risultanti,
hessiani) I, II},  Atti Acc. Lincei 37 (1964), 109--117 and
215--221.
\bibitem[Seg1]{C.Segre1} C. Segre, {\it Su una classe di superficie degli iperspazi legate colle equazioni lineari alle derivate partziali di 2 ordine}, Atti Accad. Sci. Torino Cl. Sci. Fis. Mat. Natur. 42 (1906-1907), 1047-1079.

\bibitem[Seg2]{C.Segre2} C. Segre, {\it Preliminari di una teoria delle varieta luoghi di spazi}, Rend. Circ. Mat. Palermo 30 (1910), 87-121.

\bibitem[Seg3]{C.Segre3} C. Segre, {\it Ricerche sui fasci di coni quadrici in uno spazio lineare qualunque}, Atti della R. Acc. delle Scienze di Torino, Vol. XIX. (1884).

\end{thebibliography}
\end{document}